\documentclass[12pt]{amsart}
\usepackage{amssymb,amsmath,amsthm,latexsym,booktabs,todonotes,color,comment,eucal,uncial,comment,leftidx}
\usepackage[normalem]{ulem}
\usepackage{amsfonts}
\usepackage{graphicx, graphics,relsize,url}

\usepackage{tikz}

 \newcommand{\mex}{{\bf mex}}
 \newcommand{\Gr}{{\bf Gr}}
\def\N{\mathcal{N}}
\def\P{\mathcal{P}}

\theoremstyle{definition}
\newtheorem{definition}{Definition}

\newtheorem{remark}[definition]{Remark}

\theoremstyle{plain}
\newtheorem{lemma}[definition]{Lemma}

\newtheorem{theorem}[definition]{Theorem}
\newtheorem{corollary}[definition]{Corollary}

\begin{document}

\title[Two-pile and three-pile games of Halve Nim]{Two-pile and three-pile games of \\
a new variant of Nim known as \\
Halve Nim}

\author[S. Locke]{Stephen C. Locke}
\address{Department of Mathematics and Statistics, 
         Florida Atlantic University, 
         777 Glades Rd,  
         Boca Raton, FL 33431 
         USA }
\email{lockes@fau.edu}
\author[S. Curran]{Stephen J. Curran}
\address{Department of Mathematics
         University of Pittsburgh at Johnstown
         450 Schoolhouse Rd
         Johnstown, PA 15904
         USA}
\email{sjcurran@pitt.edu}
\author[R. Low]{Richard M. Low}
\address{Department of Mathematics, 
         San Jose State University, 
         1 Washington Sq, 
         San Jose, CA 95192 
         USA }
\email{richard.low@sjsu.edu}

\keywords{Combinatorial game theory, Nim, Grundy number}

\date{June 28, 2026. \\
\indent
2010 \textit{Mathematics Subject Classification.} 91A46}

\begin{abstract}
We investigate a variant of Nim called Halve Nim, which 
in addition to the standard moves of Nim, 
we allow replacing each pile of coins 
with half its amount.  
We determine the $\mathcal{P}$-positions of all three-pile games of Halve Nim
in which one pile has
at most ten coins.
This result extends known results on
the $\mathcal{P}$-positions of the two-pile games of Halve Nim. 
\end{abstract}

\maketitle      

\section{Introduction and Preliminaries}\label{sec:Intro}

Having its humble beginnings in the context of 
recreational mathematics, combinatorial game theory has 
matured into an active area of research. Along with its 
natural appeal, the subject has applications to 
complexity theory, logic, graph theory and biology. For 
these reasons, combinatorial games have caught the 
attention of many people and the large body of research 
literature on the subject continues to increase. The 
interested reader is directed to 
\cite{AlbNowak,AlNoWo,BCG,Conway,Guy,GN2,L2025,L2019,N2015,N94}, 
and to A.~Fraenkel's excellent bibliography \cite{Fraen}.

A \textit{combinatorial game} is one of complete information and no element of chance is involved in gameplay. Each player is aware of the game position at any point in the game. Under \textit{normal play}, two players alternate taking turns and a player loses when he cannot make a move. An \textit{impartial} combinatorial game is one where both players have the same options from any position. A \textit{finite} game eventually terminates (with a winner and a loser, no draws allowed). 

Perhaps the most famous finite impartial combinatorial game is Nim, which is played in the following manner: 
\begin{itemize}
\item[] There are $n$ piles, each containing a finite number of coins. Two players alternate turns, each time choosing a pile and removing any number ($\geq 1$) of coins in that pile. The player who cannot make a move loses the game.
\end{itemize}

\noindent
A complete analysis of Nim was given by Bouton \cite{Bou}. Since then, a large number of papers have been written on variants of Nim and are found within the mathematical literature.

In this paper, we analyze a variant of Nim, which we call \textbf{Halve Nim}. It is played in the following way  between Player 1 (who moves first) and Player 2: 

\begin{itemize}
\item[$\bullet$] In front of them are various piles of coins. At each turn, a player makes a move. If there are no moves available, that player loses and the opponent wins the game. Each move is one of two types:
\begin{enumerate}
    \item The player may perform a standard Nim move. Here, the player chooses one pile and removes at least one coin from that pile.
    \item The player can {\it halve} the game, replacing every pile of $2k$ or $2k+1$ coins by a pile  of $k$ coins, for every possible $k$.
\end{enumerate}
\end{itemize}

\noindent
Note that the game must eventually end.
Each move   decreases the number of coins.
When there are no coins, the player whose turn it is (has no moves), loses.

First, we recall some concepts from combinatorial game theory. Any terms which are not explicitly defined in this paper can be found in \cite{AlNoWo}. Let $\mathbb{N}$ denote the set of nonnegative integers. For $S\subset {\mathbb{N}}$ , with $S\ne {\mathbb{N}}$, we define
$\mex(S) = \min \{t\in {\mathbb{N}} : t\notin S\}$. If $G$ is a game position in
which there are no moves available, we set $\Gr(G)=0$.
Then, if $G$ is any other position in the game and
$Q$ is the set of
positions that can be reached from $G$ in one move, then
\begin{equation*}
\Gr(G)=\mex\{ \Gr(H) : H\in Q \}
\end{equation*}
and this is called the {\it Grundy number} of $G$.
 A game $G$ that can be won by the player that made the
 move previous to $G$ is called a $\P$-\textit{position}.
 Similarly, we say that $G$ is an $\N$-\textit{position} if
 $G$ can be won by the player to make the next move after $G$.
 Thus, a game $G$ is a $\P$-position
 if and only if $\Gr(G)=0$. 

 \begin{remark} \label{rem:FindPposition}
     The proof technique that we use
     to demonstrate that a game $G$ is
     either a $\P$-position or an
     $\N$-position is based on the following observations.
     \begin{itemize}
         \item[$\bullet$] A game $G$ is a $\P$-position if for each move
         $H$ from $G$, there is a move $K$ from $H$ such that
         $K$ is a $\P$-position.
         We will sometimes equivalently show that the move $K$ is an $\N$-position.
         \item[$\bullet$] A game $G$ is an $\N$-position if there exists a
         move $H$ from $G$ such that $H$ is a $\P$-position.
     \end{itemize}
 \end{remark} 

\indent
We denote a position $G$ in Halve Nim by
$G= 1^{a_1} \, 2^{a_2} \cdots k^{a_k}$, where $G$ has $a_j$ piles of size $j$.
Thus, for example, $1^4 3^5 7^2$ denotes a position with four piles of
one coin, five piles of three coins, and two piles of seven coins.

Bouton \cite{Bou} completely characterized the winning strategy for Nim.
A game of Nim with piles of coins of heights $a_1,a_2,\ldots,a_n$
is a $\mathcal{P}$-position if and only if
$a_1 \oplus a_2 \oplus \cdots \oplus a_n=0$.
One reason to believe that Halve Nim will produce results similar to
those of standard Nim is that the halving operation
respects bitwise addition.
Namely, we have
\begin{equation*}
    \biggl\lfloor\frac{a_1 \oplus a_2 \oplus \cdots \oplus a_n}{2}
    \biggr\rfloor=
    \biggl\lfloor\frac{a_1}{2} \biggr\rfloor  \oplus
    \biggl\lfloor\frac{a_2}{2} \biggr\rfloor  \oplus \cdots\oplus \biggl\lfloor\frac{a_n}{2} \biggr\rfloor.
\end{equation*}
Throughout this paper, for each result we obtain for Halve Nim,
we will compare it with the corresponding result in Nim.

\section{$\P$-positions of Two-pile Games}\label{sec:TwoPile}

We begin by calculating the Grundy numbers of all two-pile games
$a^1 b^1$ with $0\le a,b \le 15$.
When a standard Nim move is applied to the game $a^1 b^1$,
the resulting position is either $x^1 b^1$, for $0\le x<a$, or
$a^1 y^1$, for $0\le y<b$.
When the halving operation is applied to $a^1 b^1$, the
resulting position is
$\big(\lfloor \tfrac{a}{2}\rfloor\big)^1
\big(\lfloor \tfrac{b}{2}\rfloor\big)^1$.
Thus,
\begin{align*}
    \Gr(a^1 b^1)=
    \mex\big( \{ \Gr(x^1 b^1): 0\le x<a\} &\cup
    \{ \Gr(a^1 y^1): 0\le y<b\} \\
    &\cup
    \{\Gr\big( \big(\lfloor \tfrac{a}{2}\rfloor\big)^1
\big(\lfloor \tfrac{b}{2}\rfloor\big)^1 \big)\}\big).
\end{align*}
The values of $\Gr(a^1 b^1)$, for  $0\le a,b \le 15$, are displayed in
Table \ref{tab:Grundyab0Alt}.
These values were calculated using a Maple program.

\begin{table}[ht]
\centering
{\small
\begin{tabular}{c|cccccccccccccccc}
$a{\backslash}b$ &0 &1  &2 &3  &4 &5  &6 &7  &8 &9  &10
 &11  &12 &13  &14 &15   \\
 \hline
 0  &0   &1 &  2 &  3 &  4 &  5 &  6 &  7 &  8 & 9
 &10 & 11 & 12 & 13 & 14 &15
      \\[0.038in]
 1  &1 &2 &  0 &  4 &  3 &  6 &  5 &  8 &  7 & 10 &  9 & 12 & 11 & 14 & 13 & 16
      \\[0.038in]
 2  &2 &0 &  1 &  5 &  6 &  3 &  7 &  9 &  4 &  8 & 11 & 10 & 13 & 12 & 15 & 14
      \\[0.038in]
 3  &3 &4 &  5 &  0 &  1 &  2 &  8 &  6 &  9 &  7 & 12 & 13 & 10 & 11 & 16 & 17
      \\[0.038in]
 4  &4 &3 &  6 &  1 &  0 &  7 &  2 & 10 &  5 & 11 &  8 &  9 & 14 & 15 & 12 & 13
      \\[0.038in]
 5  &5 &6 &  3 &  2 &  7 &  0 &  1 &  4 & 10 & 12 & 13 &  8 &  9 & 16 & 11 & 18
      \\[0.038in]
 6  &6 &5 &  7 &  8 &  2 &  1 &  3 & 11 &  0 &  4 & 14 & 15 & 16 &  9 & 10 & 12
      \\[0.038in]
 7  &7  &8 &  9 &  6 & 10 &  4 & 11 &  1 &  2 &  0 &  3 &  5 & 15 & 17 & 18 & 19
  \\[0.038in]
 8  &8  &7 &  4 &  9 &  5 & 10 &  0 &  2 &  1 &  3 &  6 & 14 & 17 & 18 & 19 & 11
  \\[0.038in]
 9  &9   &10 &  8 &  7 & 11 & 12 &  4 &  0 &  3 &  1 &  2 &  6 &  5 & 19 & 17 & 20
   \\[0.038in]
 10  &10   &9 & 11 & 12 &  8 & 13 & 14 &  3 &  6 &  2 &  1 &  4 &  0 &  5 &  7 & 21
  \\[0.038in]
 11  &11 &12 & 10 & 13 &  9 &  8 & 15 &  5 & 14 &  6 &  4 &  1 &  2 &  0 &  3 &  7
  \\[0.038in]
 12  &12 &11 & 13 & 10 & 14 &  9 & 16 & 15 & 17 &  5 &  0 &  2 &  1 &  4 &  6 &  3
  \\[0.038in]
 13  &13 &14 & 12 & 11 & 15 & 16 &  9 & 17 & 18 & 19 &  5 &  0 &  4 &  1 &  2 &  6
  \\[0.038in]
 14  &14 &13 & 15 & 16 & 12 & 11 & 10 & 18 & 19 & 17 &  7 &  3 &  6 &  2 &  0 &  4
  \\[0.038in]
 15  &15 &16 & 14 & 17 & 13 & 18 & 12 & 19 & 11 & 20 & 21 &  7 &  3 &  6 &  4 &  0
  \\[0.038in]
\end{tabular}
}
\caption{Grundy number of $a^1 b^1$, denoted by $\Gr(a^1 b^1)$,
is at the $(a,b)$-entry.}
\label{tab:Grundyab0Alt}
\end{table}

\begin{remark}\label{obs:Unique}
We observe that for a fixed
nonnegative integer $a$, if there
exists a nonnegative integer $b$
such that $a^1 b^1$ is a $\mathcal{P}$-position,
then $b$ is unique.
Suppose there exists $b$ such that $a^1 b^1$ is a
$\mathcal{P}$-position.
Let $b$ be the smallest nonnegative
integer such that
$a^1 b^1$ is a $\mathcal{P}$-position.
Since $a^1 b^1$ is reachable from $a^1 y^1$
for all $y>b$, $a^1 y^1$
is an $\mathcal{N}$-position
for all $y>b$.
This observation leads to the following definition.
\end{remark}
\vspace{0.2in}

\begin{definition}
    The
    \emph{Two-pile Halve Nim
    $\mathcal{P}$-position function}
    $\varphi_0:\mathbb{N} \to \mathbb{N}$
    returns the unique nonnegative integer
    for which
    $x^1 \; \bigl(\varphi_0(x)\bigr)^1$ is a $\mathcal{P}$-position.
\end{definition}

We make use of Remark \ref{rem:FindPposition} to determine
the two-pile games $a^1 b^1$ that are $\P$-positions.
Suppose we know $\varphi_0(x)$, for all $0\le x <a$,
and we want to find
$\varphi_0(a)$. If there exists $b<a$ such that $\varphi_0(b)=a$,
then $\varphi_0(a)=b$.
Otherwise, $\varphi_0(a)\ge a$.
From the list $a,a+1,a+2,\ldots$, we find the smallest value $b$
such that $\varphi_0(x)\ne b$, for all $0\le x<a$, and
$\varphi_0\big(\lfloor\tfrac{a}{2}\rfloor\big)
\ne \lfloor\tfrac{b}{2}\rfloor$. Then $\varphi_0(a)=b$.
Thus in Table \ref{tab:abPpoisitions},
we list the games $a^1 b^1$ that are
$\P$-positions, for $0\le a\le 79$.
The list of games $a^1 b^1$ that are $\P$-positions in Table
\ref{tab:abPpoisitions} and the list of games $a^1 b^1 c^1$,
for $0\le a,b\le 79$ and $1\le c\le 10$, in the tables throughout
this paper have been confirmed by Maple program calculations.

\begin{table}[ht]
\centering
\begin{tabular}{ccccccccccccc}
\multicolumn{8}{c}{List of games $a^1 b^1$ that are
$\mathcal{P}$-positions} \\
\hline
(0,0) &(1,2)  &(2,1)  &(3,3)  &(4,4)   &(5,5)
&(6,8)  &(7,9)  \\[0.025in]
(8,6)  &(9,7)  &(10,12)  &(11,13)  &(12,10)  &(13,11)  &(14,14)  &(15,15) \\[0.025in]
(16,16)  &(17,17)  &(18,18)  &(19,19) &(20,20) &(21,21)  &(22,22)  &(23,23)  \\[0.025in]
(24,24)   &(25,25) &(26,26)  &(27,27)  &(28,30)  &(29,31)
&(30,28)  &(31,29)  \\[0.025in]
(32,34)  &(33,35)  &(34,32)   &(35,33)
&(36,38)  &(37,39)  &(38,36)  &(39,37)
\\[0.025in]
(40,42) &(41,43)  &(42,40)  &(43,41)  &(44,46)   &(45,47)
&(46,44)  &(47,45)  \\[0.025in]
(48,50)  &(49,51)  &(50,48) &(51,49)  &(52,54)  &(53,55)  &(54,52)   &(55,53) \\[0.025in]
(56,56)  &(57,57)  &(58,58)  &(59,59) &(60,60)  &(61,61)  &(62,62)  &(63,63)  \\[0.025in]
(64,64)   &(65,65) &(66,66)  &(67,67)  &(68,68)  &(69,69)  &(70,70) &(71,71)  \\[0.025in]
(72,72)  &(73,73)  &(74,74)   &(75,75)
&(76,76)  &(77,77)  &(78,78)  &(79,79)
\\[0.025in]
\end{tabular}
\caption{$\mathcal{P}$-position games $a^1 b^1$
listed as $(a,b)$.}
\label{tab:abPpoisitions}
\end{table}

\begin{remark}\label{rem:BeginInduction0}
    The proof of Theorem \ref{thm:TwoHeapPposition} is by mathematical induction.
    In each step of the proof, 
    we want to show that $a^1 \big(\varphi_0(a)\big)^1$
    is a $\P$-position assuming that for all $x<a$,
    $x^1 y^1$ is a $\P$-position if and only if     
    $y=\varphi_0(x)$.
    We will observe that there exists a positive integer $a_0$
    such that for all $y<a_0$, $\varphi_0(y)<a_0$.
    Let $b=\varphi_0(a)$.
    Suppose Player 1 applies a standard Nim move to $a^1 b^1$.
    The resulting position is either 
    $y^1 b^1$ for $0\le y<a$ or $a^1 y^1$ for $0\le y<b$.
    Suppose Player 1 removes either at least $a-a_0+1$
    coins from the pile of height $a$ or at least $b-a_0+1$
    coins from the pile of height $b$ to obtain
    $y^1 z^1$ for $0\le y<a_0$ and $z\in\{a,\, b\}$.
    Then Player 2 removes $z-\varphi_0(y)$ coins from
    the pile of height $z$ to obtain $y^1 (\varphi_0(y))^1$,
    which is a $\P$-position by the inductive hypothesis.
    Thus, we may assume that the position after Player 1 applies a
    standard Nim move to $a^1 b^1$ is either
    $y^1 b^1$ for $a_0\le y<a$ or
    $a^1 y^1$ for $a_0\le y<b$.
\end{remark}

\begin{theorem}[\cite{CurranEtAl2026}, Theorem 1]\label{thm:TwoHeapPposition}
The first 14 values of $\varphi_0$, in the form of 
$(a,\varphi_0(a))$, are displayed in Table \ref{tab:abPpoisitions}.
     \begin{enumerate}
       \item Suppose $7\cdot 2^{2n-1} \le x <7\cdot 2^{2n}$
       for some positive integer $n$.
       Then
       \begin{equation*}
        \varphi_0(x)=x.
       \end{equation*}
    \item Suppose $7\cdot 2^{2n} \le x <7\cdot 2^{2n+1}$
    for some positive integer $n$.
       Then
       \begin{equation*}
         \varphi_0(x)= x \oplus 2.
       \end{equation*}
   \end{enumerate}
   Hence, $a^1 b^1$ is a $\mathcal{P}$-position if and only if
   $b=\varphi_0(a)$.
\end{theorem}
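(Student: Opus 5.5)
We will prove the statement by strong induction on $a$, following the template of Remark \ref{rem:BeginInduction0}. Define $\psi(x)$ to be the value given by the table for $x\le 13$ and by the formula in (1)/(2) otherwise. Parts (1) and (2) together cover every $x \ge 14$, since $7\cdot 2^1=14$ and the intervals $[7\cdot 2^{m},7\cdot 2^{m+1})$ tile $[14,\infty)$, with $m$ odd for (1) and $m$ even for (2). We first note that $\psi$ is an involution, because $x\mapsto x\oplus 2$ preserves each interval $[7\cdot 2^m,7\cdot 2^{m+1})$. This holds since $7\cdot 2^m$ is divisible by $4$ for $m\ge 2$, so flipping bit $1$ stays inside the block of four containing $x$, which in turn lies inside the interval. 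By the uniqueness in Remark \ref{obs:Unique} and the involution property, it then suffices to show that $a^1\psi(a)^1$ is a $\P$-position for every $a$. Indeed, if $a^1 b^1$ and $a^1 b'^1$ were both $\P$-positions with $b\neq b'$, one would be reachable from the other.

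For $a\le 13$ the claim is the finite check recorded in Table \ref{tab:abPpoisitions}. For $a\ge 14$, let $b=\psi(a)$, which lies in the same interval $I=[7\cdot 2^m,7\cdot 2^{m+1})$ as $a$. We apply Remark \ref{rem:BeginInduction0} with $a_0=7\cdot 2^m$; its hypothesis holds because $\psi$ maps $[0,a_0)$ into itself. After this reduction, any Nim move goes to $y^1 b^1$ with $a_0\le y<a$, or to $a^1 y^1$ with $a_0\le y<b$. In either case $y\in I$, and $\psi(y)\neq b$ (respectively $\neq a$) by injectivity. Player 2 answers as follows.

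Case $y^1b^1$ with $\psi(y)<b$: Player 2 reduces $b$ to $\psi(y)$ and reaches a $\P$-position by induction.

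Case $y^1b^1$ with $\psi(y)>b$: we need $\psi(y)>b$ to be impossible. In case (1) this is clear, since $y<a=b$. In case (2), $y<a$ with $y\oplus 2>a\oplus 2$ forces $y$ and $a$ into the same block of four, with $y$ having bit $1$ clear and $a$ having it set. Then $y\oplus 2\le a$ and $b=a-2<y$, so Player 2 instead reduces $a$... but $a$ is no longer there. Here we must be more careful: the position is $y^1 b^1$ with $b<y$, and $\psi(b)=a>y$, so the inductive hypothesis applied to $b<a$ says $b^1 y^1$ is an $\N$-position. That settles the case via the second bullet of Remark \ref{rem:FindPposition}, or equivalently via Player 2's move from $y$ to $\psi(b)$ when $\psi(b)<y$. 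In general we use the symmetric fact that either $\psi(y)<b$ or $\psi(b)<y$. This holds because $\psi$ is an involution that is ``monotone up to swaps within blocks of four.''

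Case $a^1y^1$: this is symmetric to the previous case.

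The remaining and main obstacle is the halving move. The position $a^1b^1$ goes to $\lfloor a/2\rfloor^1\lfloor b/2\rfloor^1$, and we must show this is an $\N$-position, i.e.\ that $\psi(\lfloor a/2\rfloor)\neq\lfloor b/2\rfloor$. The halved values lie in $[7\cdot 2^{m-1},7\cdot 2^m)$, which is exactly the interval of the opposite type. The alternation of parts (1) and (2) is designed for precisely this.

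If $a$ is of type (1), then $b=a$, and the halved position is $c^1c^1$ with $c$ of type (2), or $c\le 13$ when $m=1$. Since $\psi(c)=c\oplus 2\neq c$, the halved position is not a $\P$-position. When $m=1$, we check directly from the table that $\psi(c)\ne c$ for $c\in\{7,\dots,13\}$, which holds.

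If $a$ is of type (2), then $b=a\oplus 2$, and the halved position is $c^1(c\oplus 1)^1$ with $c$ of type (1), or $c\in[14,27]$, where $\psi(c)=c\neq c\oplus1$.

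Thus in every case Player 2 has a winning reply, and the induction closes. The boundary intervals near $14$ and $28$ need a table check, which we carry out explicitly.
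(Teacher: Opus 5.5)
Your proof has a genuine gap at the sentence ``Case $a^1y^1$: this is symmetric to the previous case.'' Your induction is on the first pile $a$. When $a$ is of type~(2) with $a\equiv 0,1\pmod 4$, you have $b=a+2>a$, so Player~1 can lower $b$ to some $y$ with $a\le y<b$.

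One of these positions, $(4k+1)^1(4k+2)^1$, is harmless: Player~2 reduces $4k+1$ to $4k=\psi(4k+2)$. The others are not covered:
\begin{itemize}
\item $(4k)^2$ and $(4k)^1(4k+1)^1$, reached from $(4k)^1(4k+2)^1$;
\item $(4k+1)^2$, reached from $(4k+1)^1(4k+3)^1$.
\end{itemize}
In these, the smaller pile is not less than $a$, so the inductive hypothesis says nothing about them. The mirror of your first-case argument (``apply the hypothesis at $b<a$'') would need the hypothesis at index $a$ itself, which is what you are proving.

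Your asserted general fact is also false. In the case $y^1b^1$ you always have $\psi(b)=a>y$, so the disjunction reduces to $\psi(y)<b$, which you yourself showed can fail. The first case survives only because the hypothesis at $y$ (or at $b<a$) directly makes $y^1b^1$ an $\N$-position, and your intermediate claim $b<y$ is wrong when $y=b$. In the mirrored case both inequalities fail. For example, $28^1 30^1\to 28^2$ or $28^1 29^1$, and $29^1 31^1\to 29^2$; in each, $\psi(y)>a$ and $\psi(a)=b>y$.

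The missing idea is that Player~2 must sometimes answer a Nim move with the halving move. From $(4k)^2$, $(4k)^1(4k+1)^1$, and $(4k+1)^2$ there is no Nim reply to a $\P$-position. Every Nim move leaves a pile of height $4k$ or $4k+1$ next to a pile of height at most $4k+1$. The unique $\P$-partners of $4k$ and $4k+1$ are $4k+2$ and $4k+3$.

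Player~2 must therefore halve, reaching $(2k)^2$. This is a $\P$-position because $2k$ lies in a type~(1) interval, so $\psi(2k)=2k$; in the examples above this is $14^2$. This is exactly what the paper does when Player~1 removes $y+r$ coins, $y\in\{1,2\}$, from the pile of height $4x+2+r$.

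Once you add this step, the rest of your argument goes through. It also helps to note that you may assume $a\le b$, since if $b<a$ then $b^1a^1=b^1\psi(b)^1$ is already a $\P$-position by induction. Your treatment of Player~1's halving move is correct, using $\lfloor (a\oplus 2)/2\rfloor=\lfloor a/2\rfloor\oplus 1$ together with the check $\psi(c)\neq c$ for $7\le c\le 13$. It is a slightly more direct version of the paper's argument, which instead exhibits Player~2's reply: a second halving, or removing two coins when the half is $12$ or $13$.
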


\begin{proof}
  We observe that $0^2$ is a $\mathcal{P}$-position.
Next, we show that $1^1 2^1$ is a $\P$-position.
Suppose Player 1 applies a standard Nim move to $1^1 2^1$.
By Remark \ref{rem:BeginInduction0},
we may assume that the resulting position is $1^2$.
Player 2 applies the halving operation to obtain $0^2$.
Suppose Player 1 applies the halving operation on $1^1 2^1$
to obtain $0^1 1^1$.
Player 2 applies the halving operation to obtain $0^2$.
Therefore,  $1^1 2^1$ is a $\P$-position.

We want to show that $x^2$ is a $\P$-position for $3\le x\le 5$.
Suppose Player 1 applies a standard Nim move to $x^2$.
By Remark \ref{rem:BeginInduction0},
we may assume that the resulting position is an $\N$-position.
Suppose Player 1 applies the halving operation to $x^2$
to obtain $y^2$ where $y\in\{1,\, 2\}$.
For $1^2$, Player 2 applies the halving operation
to obtain $0^2$.
For $2^2$, Player 2 removes 1 coin from a pile of height 2
to obtain $1^1 2^1$, which is a $\P$-position.
Therefore, $x^2$ is a $\P$-position.

We want to show that $(4x+2+r)^1 (4x+4+r)^1$
is a $\mathcal{P}$-position for $x\in\{1,\, 2\}$ and $r\in\{0,\, 1\}$.
We assume that for $y<4x+2$, $y^1 z^1$ is a 
$\mathcal{P}$-position if and only if $z=\varphi(y)$.
We observe that for all $y<4x+2$,
  $\varphi(y)<4x+2$.
Suppose Player 1 applies a standard Nim move to $(4x+2+r)^1 (4x+4+r)^1$.
By Remark \ref{rem:BeginInduction}, we may assume that 
the resulting position is
 either $y^1 (4x+4+r)^1$ for $4x+2\le y <4x+2+r$ or 
 $(4x+2+r)^1 y^1$ for $4x+2\le y <4x+4+r$.
  Suppose, for $y\in\{1,\, 2\}$, Player 1 removes $y+r$ coins from
  the pile of height $4x+4+r$ 
  to obtain $(4x+2+r)^1 (4x+4-y)^1$.
  Player 2 applies the halving operation to obtain $(2x+1)^2$, for $x\in\{1,\, 2\}$,
  which is a $\P$-position.
 Suppose Player 1 applies the halving operation 
  to $(4x+2+r)^1 (4x+4+r)^1$
  to obtain $(2x+1)^1 (2x+2)^1$.
  Player 2 removes 1 coin from the pile of height $2x+2$ to obtain $(2x+1)^2$,
  which is a $\P$-position.
Therefore, $(4x+2)^1 (4x+4)^1$ is a $\P$-position.

 Assume $r=1$.
 Suppose Player 1 removes 1 coin from the pile of
 height either $4x+3$ or $4x+5$ to obtain 
 $(4x+2)^1 (4x+5)^1$ or $(4x+3)^1 (4x+4)^1$.
 Player 2 removes 1 coin from the opposing pile to obtain
 $(4x+2)^1 (4x+4)^1$, which is a $\mathcal{P}$-position.
 Therefore, $(4x+3)^1 (4x+5)^1$ is a $\mathcal{P}$-position.
 
Suppose $7\cdot 2^{2n-1}\le x< 7\cdot 2^{2n}$ for some 
positive integer $n$ and,
for all nonnegative integers $y<x$,
  $y^1 z^1$ is a $\mathcal{P}$-position if and 
  only if $z=\varphi(y)$.
We need to establish that $x^2$ is 
a $\mathcal{P}$-position.
 We observe that $\varphi(y)<x$ for all $y <x$.
Suppose Player 1 applies a standard Nim move to $x^2$.
By Remark \ref{rem:BeginInduction0},
we may assume that the resulting position is an $\N$-position.
   
  Suppose Player 1 applies the halving operation 
  to $x^2$ to obtain $y^2$ where $y=\lfloor \tfrac{x}{2}\rfloor$.
  Except for $y\in\{12,\, 13\}$, Player 2 applies the halving operation
  to obtain $\lfloor \tfrac{y}{2} \rfloor$,
   which is a $\P$-position since either 
  $3\le \lfloor \tfrac{y}{2} \rfloor \le 5$ or $7\cdot 2^{2n-3}\le \lfloor \tfrac{y}{2} \rfloor<7\cdot 2^{2n-2}$.
  If $y\in\{12,\, 13\}$, Player 2 removes 2 coins from one of the piles of height $y$
  to obtain $(y-2)^1 y^1$, which is a $\P$-position.
  Therefore, $x^2$ is a $\mathcal{P}$-position.
  
Suppose $7\cdot 2^{2n}\le 4x< 7\cdot 2^{2n+1}$ for some positive
integer $n$ and,
for all nonnegative integers $y<4x$,
  $y^1 z^1$ is a $\mathcal{P}$-position if and 
  only if $z=\varphi(y)$.
We want to show that $(4x+r)^1 (4x+2+r)^1$
is a $\mathcal{P}$-position for $r\in\{0,\, 1\}$.
We observe that for all $y<4x$, $\varphi(y)<4x$.
Suppose Player 1 applies a standard Nim move to $(4x+r)^1 (4x+2+r)^1$.
By Remark \ref{rem:BeginInduction0}, we may assume that
the resulting position is
 either $y^1 (4x+2+r)^1$ for $4x\le y <4x+r$ or
 $(4x+r)^1 y^1$ for $4x\le y <4x+2+r$.

 Assume $r\in\{0,\, 1\}$.
  Suppose, for $y\in\{1,\, 2\}$, Player 1 removes $y+r$ coins from the 
  pile of height $4x+2+r$ to obtain 
  $(4x+r)^1 (4x+2-y)^1$.
  Player 2 applies the halving operation
  to obtain $(2x)^2$,
  which is a $\mathcal{P}$-position
 since $7\cdot 2^{2n-1} \le 2x < 7\cdot 2^{2n}$.
  Suppose Player 1 applies the halving operation to 
  $(4x+r)^1 (4x+2+r)^1$ to obtain
  $(2x)^1 (2x+1)^1$.
  Player 2 removes 1 coin from the pile of height $2x+1$ to obtain $(2x)^2$,
  which is a $\P$-position.
  Hence, $(4x)^1 (4x+2)^1$ is a $\mathcal{P}$-position.

  Assume $r=1$.
Suppose Player 1 removes 1 coin from the pile of height 
either $4x+1$ or $4x+3$
to obtain $(4x)^1 (4x+3)^1$ or $(4x+1)^1 (4x+2)^1$.
Player 2 removes 1 coin from the opposing pile to
obtain $(4x)^1 (4x+2)^1$, which is a 
$\mathcal{P}$-position.
  Therefore, $(4x+1)^1 (4x+3)^1$ is a $\mathcal{P}$-position.
\end{proof}

For $a,b\ge 14$, we may restate Theorem \ref{thm:TwoHeapPposition} in terms of
$a\oplus b$.

\begin{corollary} \label{cor:TwoPileSecond}
    Suppose $a,b\ge 14$. Then, $a^1 b^1$ is a $\P$-position if and
    if
    \begin{equation}\label{eqn:0PileResult}
       a\oplus b= 1 + (-1)^{\lfloor \log_2(a/7)\rfloor}.
    \end{equation}
\end{corollary}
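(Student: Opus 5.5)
Since $a,b\ge 14 = 7\cdot 2^{1}$, each of $a$ and $b$ lies in some interval $[7\cdot 2^{m}, 7\cdot 2^{m+1})$ with $m\ge 1$. Theorem~\ref{thm:TwoHeapPposition} then applies to both piles: case (1) when $m=2n-1$ is odd, case (2) when $m=2n$ is even. Moreover $\lfloor \log_2(a/7)\rfloor = m$ exactly when $7\cdot 2^m\le a<7\cdot 2^{m+1}$. So the right-hand side of (\ref{eqn:0PileResult}) equals $0$ when $m$ is odd and $2$ when $m$ is even. The plan is to translate the theorem's description $b=\varphi_0(a)$ into this XOR condition, using the fact from Theorem~\ref{thm:TwoHeapPposition} that $a^1b^1$ is a $\P$-position if and only if $b=\varphi_0(a)$.

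For the forward direction, suppose $a^1 b^1$ is a $\P$-position, so $b=\varphi_0(a)$. If $m$ is odd, case (1) gives $b=a$, hence $a\oplus b=0=1+(-1)^m$. If $m$ is even, case (2) gives $b=a\oplus 2$, hence $a\oplus b=2=1+(-1)^m$.

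For the converse, suppose (\ref{eqn:0PileResult}) holds. If $m$ is odd, then $a\oplus b=0$, so $b=a=\varphi_0(a)$. If $m$ is even, then $a\oplus b=2$, so $b=a\oplus 2=\varphi_0(a)$. In either case $a^1b^1$ is a $\P$-position by Theorem~\ref{thm:TwoHeapPposition}.

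The one point needing care is that the exponent in (\ref{eqn:0PileResult}) is computed from $a$ alone, so the statement should be consistent with its symmetric role in $a$ and $b$. When $b=a\oplus 2$, only bit $1$ changes. The interval endpoints $7\cdot 2^{m}$ are multiples of $4$ because $m\ge 2$ in case (2), so flipping bit $1$ keeps $b$ in the same interval $[7\cdot 2^m,7\cdot2^{m+1})$. Hence $\lfloor\log_2(b/7)\rfloor=\lfloor\log_2(a/7)\rfloor$, and the condition is symmetric, as it must be. I expect no real obstacle: the whole argument is bookkeeping between the interval description in the theorem and the $\log_2$ expression in (\ref{eqn:0PileResult}).
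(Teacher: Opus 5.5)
Your proposal is correct and is essentially the paper's argument: fix $m$ with $7\cdot 2^m\le a<7\cdot 2^{m+1}$, and note that the right-hand side is $0$ or $2$ according to the parity of $m$. Theorem~\ref{thm:TwoHeapPposition} then turns $b=\varphi_0(a)$ into exactly that condition on $a\oplus b$; your two explicit directions spell out what the paper compresses into one line, and your closing symmetry check (that $a\oplus 2$ stays in the same interval because the endpoints are multiples of $4$) is correct but not needed for the proof.
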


\begin{proof}
    Let $m$ be the positive integer such that $7\cdot 2^m\le a<7\cdot 2^{m+1}$.
If $m$ is odd, $1 + (-1)^{\lfloor \log_2(a/7)\rfloor}=0$.
If $m$ is even, $1 + (-1)^{\lfloor \log_2(a/7)\rfloor}=2$.
By Theorem \ref{thm:TwoHeapPposition}, $a^1 b^1$ is a $\P$-position if and only if $a\oplus b=1 + (-1)^{\lfloor \log_2(a/7)\rfloor}$.
\end{proof}

\begin{remark}
    For comparison, in Nim, the game $a^1 b^1$ is a $\P$-position
    if and only if
    \begin{equation*}
       a\oplus b= 0.
    \end{equation*}
\end{remark}

\section{$\P$-positions of Three-pile Games}\label{sec:ThreePile}

In this section
we consider the three-pile games $a^1\, b^1\, c^1$
of Halve Nim, whenever $1\le c\le 10$.
The main result of Section \ref{sec:ThreePile} is 
Theorem \ref{thm:MainResult} which characterizes
the $\P$-position of the
three-pile game of Halve Nim $a^1 b^1 c^1$
in terms of Nim addition, for $a,b\ge 30$ and $0\le c\le 7$.

\begin{theorem}[Main Result]\label{thm:MainResult}
    Suppose $a$, $b$, and $c$ are integers such that $a,b\ge 30$.
    \begin{enumerate}
        \item Suppose $0\le c \le 3$. Then $a^1 b^1 c^1$ is a $\P$-position 
        if and only if
        \begin{equation}\label{eqn:FirstBitsum}
            a\oplus b = \tfrac{1}{2}\big( 3 +(-1)^{c+1}\big) +(-1)^{\lfloor \log_2(a/7)\rfloor +\big(1+(-1)^{\lfloor c/2\rfloor+1}\big)/2}.
         \end{equation}
            \item Suppose $4\le c \le 7$. Then $a^1 b^1 c^1$ is a $\P$-position 
        if and only if
        \begin{equation}\label{eqn:SecondBitsum}
            (a+2)\oplus (b+2) =c-2.
        \end{equation}
    \end{enumerate}
\end{theorem}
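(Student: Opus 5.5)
We would prove Theorem \ref{thm:MainResult} the same way as Theorem \ref{thm:TwoHeapPposition}. First fix $c$ and define a three-pile analogue $\varphi_c$ of $\varphi_0$. The argument of Remark \ref{obs:Unique} applies verbatim: for fixed $a$ and $c$ there is at most one $b$ with $a^1 b^1 c^1$ a $\P$-position, because $a^1 b^1 c^1$ is reachable from $a^1 y^1 c^1$ for every $y>b$.

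The right-hand sides of \eqref{eqn:FirstBitsum} and \eqref{eqn:SecondBitsum} define a map $b=\psi_c(a)$ with the following properties:
\begin{itemize}
\item $\psi_c$ is an involution;
\item $\psi_c$ only changes the two lowest bits of $a$ (of $a+2$ in part (2));
\item $\psi_c(a)$ lies in the same interval $[7\cdot 2^m,7\cdot 2^{m+1})$ as $a$.
\end{itemize}
Concretely, $a\oplus b$ equals $1+(-1)^L$ for $c=0$, $2+(-1)^L$ for $c=1$, $1-(-1)^L$ for $c=2$ and $2-(-1)^L$ for $c=3$, where $L=\lfloor\log_2(a/7)\rfloor$. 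Because of the uniqueness above, it suffices to show that every position $a^1(\psi_c(a))^1c^1$ is a $\P$-position. Every other position is then an $\N$-position, since it has a Nim move on $b$ down to $\psi_c(a)$, or it is such a target position for some smaller $b$.

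The proof of the $\P$-claim is by induction on $c$ and on $a+b$. The base consists of all positions with $\min(a,b)<30$ (and $c\le 10$). For these we would rely on the computed tables of $\P$-positions that the paper lists and that were checked by Maple; they are needed because halving sends $a\ge 30$ to $\lfloor a/2\rfloor\ge 15$. Then, following Remark \ref{rem:FindPposition}, we check each kind of move by Player 1 from $G=a^1 b^1 c^1$ with $b=\psi_c(a)$.

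\begin{enumerate}
\item \emph{Nim move taking $a$ to $y$ (or $b$ to $y$).} If $y<30$, the response comes from the base table. If $y\ge 30$, Player 2 answers by moving the other pile to $\psi_c(y)$. This works when $\psi_c(y)$ is smaller than that pile. In the few low-bit cases where it is not, Player 2 instead halves, or changes $c$, and we check the result against the tables. This mirrors the $r\in\{0,1\}$ case analysis in the proof of Theorem \ref{thm:TwoHeapPposition}.
\item \emph{Nim move taking $c$ to $c'<c$.} Player 2 must reach $a'^1 b'^1 c'^1$ satisfying the formula for $c'$. Since the required value of $a\oplus b$ changes only in its two lowest bits, lowering one of $a,b$ by $1$, $2$ or $3$ usually suffices. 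We tabulate the pairs $(c,c')$ and, for each, the residues of $a$ modulo $4$ together with the parity of $L$.
\item \emph{Halving.} The result is $\lfloor a/2\rfloor^1\lfloor b/2\rfloor^1\lfloor c/2\rfloor^1$, with $\lfloor c/2\rfloor\in\{0,1\}$ when $c\le 3$ and $\lfloor c/2\rfloor\in\{2,3\}$ when $4\le c\le 7$. Halving shifts $L$ by one and discards the lowest bit of $a\oplus b$. Player 2 should answer by halving again, or by a one-pile Nim move, to land on the correct formula. For example, when $c\in\{0,1\}$, halving twice returns $c$ to $0$ and restores the parity of $L$, as in the $x^2$ case of Theorem \ref{thm:TwoHeapPposition}.
\end{enumerate}

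The main obstacle is the halving case, especially in part (2). There the invariant is phrased with $(a+2)\oplus(b+2)$, and this does not commute with halving as cleanly as plain $\oplus$ does. Carries from adding $2$ interact with the lowest bits and with the parity of $L$, and the halved piles may land in $[15,30)$, where only the tables apply. We would first try to show that halving $a^1 b^1 c^1$ with $4\le c\le 7$ gives a position with third pile $2$ or $3$ from which a single Nim move on $a$ or $b$ reaches a $\P$-position of the form $x^1 y^1 2^1$ or $x^1 y^1 3^1$ given by part (1). Only if that fails would we fall back on an explicit split by $a\bmod 8$.
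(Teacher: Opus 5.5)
Your framework matches the paper's. Uniqueness of the partner pile reduces everything to showing that $a^1\,\psi_c(a)^1\,c^1$ is a $\mathcal{P}$-position. You induct on $c$ and then on the piles, the tables supply the small cases, and Player~1's moves are checked by type.

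\textbf{The gap.} It is at the step you yourself call the main obstacle, halving in part (2). You leave it open, and the plan you propose for it fails: a single Nim move on one of the two large piles of the halved position need not reach a $\mathcal{P}$-position. For instance, $112^1\,116^1\,6^1$ is a $\mathcal{P}$-position ($114\oplus 118=4$), and it halves to $56^1\,58^1\,3^1$. Since $\lfloor\log_2(56/7)\rfloor=3$ is odd, part (1) requires $a\oplus b=3$. But the partners $\psi_3(58)=57$ and $\psi_3(56)=59$ exceed the piles they would replace, so the only winning reply is $3\to 2$.

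\textbf{The missing idea.} No reply has to be exhibited. Part (1) is already an ``if and only if'' for third piles $2$ and $3$, so it suffices that the halved position violates it, and parity shows this.
\begin{itemize}
\item If $(a+2)\oplus(b+2)\in\{2,3\}$, then $\lfloor a/2\rfloor+1$ and $\lfloor b/2\rfloor+1$ are $2k$ and $2k+1$. So $\lfloor a/2\rfloor\oplus\lfloor b/2\rfloor$ is odd, whereas third pile $2$ requires a value in $\{0,2\}$.
\item If $(a+2)\oplus(b+2)\in\{4,5\}$, then $\lfloor a/2\rfloor$ and $\lfloor b/2\rfloor$ have equal parity, whereas third pile $3$ requires a value in $\{1,3\}$.
\end{itemize}
Neither the carries from adding $2$ nor the parity of $\lfloor\log_2(a/7)\rfloor$ plays any role. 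An explicit check is needed only when a halved pile drops below $30$ (it can be as small as $15$). That is the content of Lemmas~\ref{lem:HalveCase} and~\ref{lem:HalveCase2}.

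\textbf{The Nim-move cases.} The same principle makes the Nim-move cases of part (2) routine: prove $\mathcal{N}$ from the characterization for smaller $c$, rather than constructing Player~2's answer. It should replace your ``Player~2 halves or changes $c$, and we check the result against the tables''; for large piles the tables say nothing.
\begin{itemize}
\item If Player~1 leaves $a\oplus b\in\{0,1,2,3\}$ with third pile at least $4$, then lowering the third pile to the unique $k\le 3$ with $\gamma_k(a)=a\oplus b$ wins (Lemma~\ref{lem:NPositions}).
\item If Player~1 lowers $c$ to some $c'\le 3$, the result is $\mathcal{N}$, because $a\oplus b\ge 4$ in every $\mathcal{P}$-position of part (2) (Lemma~\ref{lem:NPositions2}).
\end{itemize}

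\textbf{Smaller corrections.}
\begin{itemize}
\item In part (2), $\psi_c$ flips the $2^2$ bit of $a+2$ when $c\in\{6,7\}$, and it can leave $[7\cdot 2^m,7\cdot 2^{m+1})$ (e.g.\ $\psi_6(54)=58$). So your second and third bullets fail there. Also, some replies must lower a pile by $5$: after $32^1\,36^1\,6^1\to 32^1\,36^1\,5^1$, the only winning reply is $36\to 31$.
\item Your reason that all other positions are $\mathcal{N}$ (a Nim move on $b$) is not valid in general. In $32^1\,33^1\,1^1$ the only winning move is halving. Uniqueness alone already gives that conclusion.
\end{itemize}
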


\begin{remark}
    For comparison, in Nim, the game $a^1 b^1 c^1$ is a $\P$-position
    if and only if
    \begin{equation*}
       a\oplus b= c.
    \end{equation*}
\end{remark}

In order to prove \eqref{eqn:FirstBitsum} in Theorem \ref{thm:MainResult},
it will be helpful to define the function appearing on the right hand side of \eqref{eqn:FirstBitsum}.

\begin{definition}
    Let $x$ and $c$ be integers such that $x\ge 1$ and $0\le c\le 3$.
    The \emph{$\P$-position value function} $\gamma_c:\mathbb{Z}_{+}\to\mathbb{Z}$ is defined by
    \begin{equation*}
         \gamma_c(x)    = \tfrac{1}{2}\big( 3 +(-1)^{c+1}\big) +(-1)^{\lfloor \log_2(x/7)\rfloor +\big(1+(-1)^{\lfloor c/2\rfloor+1}\big)/2}.
         \end{equation*}
\end{definition}

The remainder of Section \ref{sec:ThreePile} is dedicated to proving
Theorem \ref{thm:MainResult} and attempting to determine
if there is a similar Nim-like characterization of the
$\P$-positions of the game $a^1 b^1 c^1$ for $8\le c\le 10$.
We begin the proof of Theorem \ref{thm:MainResult}
with Remark \ref{rem:ThreePilePposition}.

\begin{remark}\label{rem:ThreePilePposition}
Consider all games $a^1 b^1 c^1$ where $c$ is fixed.
We observe that for a fixed 
nonnegative integer $a$, if there exists a nonnegative integer $b$
such that $a^1 b^1 c^1$ is a $\mathcal{P}$-position,
then $b$ is unique.
Suppose there exists $b$ such that 
$a^1 b^1 c^1$ is a 
$\mathcal{P}$-position.
Let $b$ be the smallest nonnegative 
integer such that
$a^1 b^1 c^1$ is a $\mathcal{P}$-position.
Since $a^1 b^1 c^1$ is reachable from $a^1 x^1 c^1$
for all $x>b$, $a^1 x^1 c^1$ 
is an $\mathcal{N}$-position
for all $x>b$. 
\end{remark}

This observation from Remark \ref{rem:ThreePilePposition} 
leads to Definition \ref{defn:ThreeHeap}.

\begin{definition}\label{defn:ThreeHeap}
   Let $c$ be a positive integer.
    The 
    {\em Three-pile with one pile of height $c$ function}
    $\varphi_c:\mathbb{N}  \to \mathbb{N}$
    is defined such that
    $\varphi_c(x)$ is the unique nonnegative integer
    for which
    $x^1 \; \bigl(\varphi_c(x)\bigr)^1 c^1$ is a $\mathcal{P}$-position.
\end{definition}

The $\mathcal{P}$-positions of the three-pile
games $a^1 \, b^1 \, 1^1$, for $0\le a \le 79$,
are listed in Table \ref{tab:ab1Ppoisitions}
as the ordered pair $(a,b)$.
The results in Table \ref{tab:ab1Ppoisitions}
as well as the $\P$-position calculations of
all three-pile games $a^1 b^1 c^1$
throughout this section
were confirmed by a Maple program calculation.

\begin{remark}
We observe that the function
    $\varphi_c(x)$ is an involution.
    In the proofs of results involving 
    $\varphi_c(x)$, whenever we establish that
    $y=\varphi_c(x)$, it will also be true that $x=\varphi_c(y)$.
    So, we will accept that this as true without explicitly stating 
    this fact.
\end{remark}

\begin{remark}
For each result on the 
$\P$-positions of the 
three-pile game
$a^1 \, b^1 \, c^1$ of Halve Nim in this section of the paper,
for $1\le c\le 10$, there is a threshold value
$b_0$ such that
for $a,b\ge b_0$, either the  $\mathcal{P}$-positions of
$a^1 \, b^1 \, c^1$ depend on
whether $7\cdot 2^{2n-1}\le a,b < 7\cdot 2^{2n}$
or $7\cdot 2^{2n}\le a,b < 7\cdot 2^{2n+1}$
for some positive integer $n$,
or the  $\mathcal{P}$-positions of
$a^1 \, b^1 \, c^1$ are periodic.
This threshold value occurs at $b_0=14$ for $c=1$,
$b_0=30$ for $c\in\{2,\,3,\, 4,\, 5,\, 6,\, ,7\}$, 
$b_0=64$ for $c\in\{8,\, 9 \}$, and $b_0=66$ for $c=10$.
\end{remark}

\begin{table}[ht]
\centering
\begin{tabular}{ccccccccccccccc}
\multicolumn{10}{c}{List of games $a^1 b^1 1^1$ that are
$\mathcal{P}$-positions} \\
\hline
(0,2) &(1,3)  &(2,0)  &(3,1)  &(4,5)   &(5,4)
&(6,9)  &(7,8)  &(8,7)  &(9,6)  
\\[0.025in]
(10,13) &(11,12)  &(12,11)  &(13,10)  &(14,15)  &(15,14)
&(16,17)  &(17,16)  &(18,19)  &(19,18)  
\\[0.025in]
(20,21) &(21,20)  &(22,23)  &(23,22)  &(24,25)   &(25,24)
&(26,27)  &(27,26)  &(28,31)  &(29,30)  
\\[0.025in]
(30,29) &(31,28)  &(32,35)  &(33,34)  &(34,33)   &(35,32)
&(36,39)  &(37,38)  &(38,37)  &(39,36)  
\\[0.025in]
(40,43) &(41,42)  &(42,41)  &(43,40)  &(44,47)   &(45,46)
&(46,45)  &(47,44)  &(48,51)  &(49,50)  
\\[0.025in]
(50,49) &(51,48)  &(52,55)  &(53,54)  &(54,53)   &(55,52)
&(56,57)  &(57,56)  &(58,59)  &(59,58)  
\\[0.025in]
(60,61) &(61,60)  &(62,63)  &(63,62)  &(64,65)   &(65,64)
&(66,67)  &(67,66)  &(68,69)  &(69,68)  
\\[0.025in]
(70,71) &(71,70)  &(72,73)  &(73,72)  &(74,75)   &(75,74)
&(76,77)  &(77,76)  &(78,79)  &(79,78) 
\\[0.025in]
\end{tabular} 
\caption{$\mathcal{P}$-position games $a^1 b^1 1^1$ 
listed as $(a,b)$.}
\label{tab:ab1Ppoisitions}
\end{table}

The list of games $a^1 b^1 1^1$ that are $\P$-positions in Table
\ref{tab:ab1Ppoisitions} and the list of games $a^1 b^1 c^1$,
for $0\le a,b\le 79$ and $2\le c\le 10$, in the tables throughout
this section have been confirmed by Maple program calculations.
We characterize the three-pile games $a^1 b^1 1^1$ that
are $\mathcal{P}$-positions in Theorem \ref{thm:ThreeHeapab1Pposition}.

\begin{theorem}\label{thm:ThreeHeapab1Pposition}
The first 14 values of $\varphi_1$, in the form of 
$(a,\varphi_1(a))$, are displayed in Table \ref{tab:ab1Ppoisitions}.

   \begin{enumerate}
       \item Suppose $7\cdot 2^{2n-1} \le x < 7\cdot 2^{2n}$ 
       for some integer $n\ge 1$.
       Then 
       \begin{equation*}
        \varphi_1(x)= x\oplus 1.   
       \end{equation*}
        \item Suppose $7\cdot 2^{2n} \le x <7\cdot 2^{2n+1}$ 
    for some integer $n\ge 1$.
       Then 
       \begin{equation*}
         \varphi_1(x)= x \oplus 3.  
       \end{equation*}
   \end{enumerate}
   Hence, $a^1 b^1 1^1$ is a $\mathcal{P}$-position if and only if
   $b=\varphi_1(a)$.
   Furthermore, for $a,b\ge 14$,
   $a^1 b^1 1^1$ is a $\P$-position if and only if
    \begin{equation}\label{eqn:1PileResult}
       a\oplus b= 2 + (-1)^{\lfloor \log_2(a/7)\rfloor}.
    \end{equation}
 \end{theorem}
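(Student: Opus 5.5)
The proof goes by strong induction on $a$, in the same way as the proof of Theorem \ref{thm:TwoHeapPposition}, using Theorem \ref{thm:TwoHeapPposition} as a black box for every two-pile position that arises. By Remark \ref{rem:ThreePilePposition}, it suffices to show that $a^1 (\varphi_1(a))^1 1^1$ is a $\P$-position for each $a$. Uniqueness then gives the ``if and only if'' statement.

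The moves from $a^1 b^1 1^1$ are of four kinds:
\begin{itemize}
\item a Nim move on the pile of height $a$;
\item a Nim move on the pile of height $b$;
\item removal of the pile of height $1$, leaving the two-pile game $a^1 b^1$;
\item halving, which yields $\lfloor a/2\rfloor^1 \lfloor b/2\rfloor^1 0^1$, i.e. the two-pile game $\lfloor a/2\rfloor^1\lfloor b/2\rfloor^1$.
\end{itemize}
So the last two kinds are always answered with Theorem \ref{thm:TwoHeapPposition}.

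For $0\le a\le 13$ I would verify the table directly, exactly as in the base cases of Theorem \ref{thm:TwoHeapPposition}. I would also record an analogue of Remark \ref{rem:BeginInduction0} for $\varphi_1$: for a threshold $a_0$ (namely $14$, and later the left endpoint of the current dyadic block), every $y<a_0$ has $\varphi_1(y)<a_0$. Consequently, any Nim move that takes a pile below $a_0$ can be answered by reducing the other large pile to $\varphi_1(y)$, using the induction hypothesis.

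\emph{Case (1): $7\cdot 2^{2n-1}\le x<7\cdot 2^{2n}$.} Here the $\P$-position should be $(2k)^1(2k+1)^1 1^1$.
\begin{itemize}
\item Removing the $1$-pile gives $(2k)^1(2k+1)^1$. Player 2 takes one coin from the $2k+1$ pile to reach $(2k)^2$, which is a $\P$-position by Theorem \ref{thm:TwoHeapPposition}(1).
\item Removing one coin from the $2k+1$ pile gives $(2k)^2 1^1$. Player 2 removes the $1$-pile.
\item Reducing one large pile to some $y<2k$ with $y$ still in the block is answered by the induction hypothesis. Player 2 reduces the other pile to $\varphi_1(y)=y\oplus 1<2k$.
\item Halving gives $k^2$, where $k$ lies in a block of type (2) of Theorem \ref{thm:TwoHeapPposition}, so $\varphi_0(k)=k\oplus 2$. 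If bit $1$ of $k$ is set, Player 2 removes $2$ coins from one pile to reach $k^1(k\oplus2)^1$. Otherwise Player 2 halves again to $\lfloor k/2\rfloor^2$, which is a $\P$-position by Theorem \ref{thm:TwoHeapPposition}(1), or by the listed small cases when $n=1$.
\end{itemize}

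\emph{Case (2): $7\cdot 2^{2n}\le x<7\cdot 2^{2n+1}$.} The $\P$-positions pair $4k+i$ with $4k+(3-i)$.
\begin{itemize}
\item Halving gives $(2k)^1(2k+1)^1$, where $2k$ lies in an identity block. Player 2 removes one coin to reach $(2k)^2$.
\item Removing the $1$-pile gives $x^1(x\oplus3)^1$. Player 2 removes one coin from the appropriate pile to reach $z^1(z\oplus2)^1$, a $\P$-position by Theorem \ref{thm:TwoHeapPposition}(2).
\item Nim moves that leave both piles in the block $\{4k,\dots,4k+3\}$ are handled one at a time. For example, $(4k)^1(4k+1)^1 1^1$ is answered by halving to $(2k)^2$, while other targets are answered by a one-coin move back to an ``$\oplus 3$'' pair.
\item Nim moves to $y<4k$ with $y$ still in the block are answered by reducing the other pile to $y\oplus3<4k$.
\end{itemize}

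The final formula \eqref{eqn:1PileResult} then follows exactly as in Corollary \ref{cor:TwoPileSecond}. The sign $(-1)^{\lfloor\log_2(a/7)\rfloor}$ selects between $1$ and $3$.

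The main obstacle is the within-block case analysis together with the boundary values. Intra-block moves in Case (2) require a separate reply for each target, and some replies (halving) land in the lowest blocks, for instance near $14$ and $28$. I would treat $n=1$ separately, checking against Tables \ref{tab:abPpoisitions} and \ref{tab:ab1Ppoisitions}, before running the general induction.
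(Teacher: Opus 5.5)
Your plan follows the paper's proof. It uses strong induction on $a$ with the same $\P$-pairs $(2k)^1(2k+1)^1 1^1$ and $(4k+i)^1(4k+3-i)^1 1^1$, the same threshold reduction, and the same replies to deleting the $1$-pile and to halving in Case (2). However, your rule for intra-block moves in Case (2) is false as stated. Suppose Player 1 moves to $(4k)^2 1^1$ (taking $3$ coins from $4k+3$) or to $(4k+1)^2 1^1$ (taking $1$ coin from $4k+2$). Then no one-coin reply wins:
\begin{itemize}
\item shrinking a large pile by one gives $(4k-1)^1(4k)^1 1^1$ or $(4k)^1(4k+1)^1 1^1$, which are $\N$-positions because $\varphi_1(4k-1)\in\{4k-4,\,4k-2\}$ and $\varphi_1(4k)=4k+3$;
\item deleting the $1$-pile leaves $z^2$ with $\varphi_0(z)=z\oplus 2\neq z$.
\end{itemize}
In fact halving to $(2k)^2$ is the \emph{only} winning reply in these positions. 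The remaining target $(4k)^1(4k+2)^1 1^1$ is answered by deleting the $1$-pile. That reaches the two-pile $\P$-position $(4k)^1(4k+2)^1$ of Theorem~\ref{thm:TwoHeapPposition}(2), which is an $\oplus 2$ pair, not an $\oplus 3$ pair. The correct split, which is what the paper does, is this: every intra-block move is answered by halving to $(2k)^2$, except the move to $(4k)^1(4k+2)^1 1^1$, which is answered by removing the $1$-pile.

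Your halving reply in Case (1) is also wrong for $n=1$. There $k\in\{7,\dots,13\}$ does not lie in a type (2) block of Theorem~\ref{thm:TwoHeapPposition}, since those begin at $28$. Your rule fails for $k\in\{7,10,11,12,13\}$: $5^1 7^1$, $8^1 10^1$, $9^1 11^1$ and $6^2$ are all $\N$-positions, because $\varphi_0(5)=5$, $\varphi_0(8)=6$, $\varphi_0(9)=7$ and $\varphi_0(6)=8$. In particular, ``or by the listed small cases when $n=1$'' is false for $k\in\{12,13\}$. You do say you would check $n=1$ against the tables. The reply actually needed is the paper's:
\begin{itemize}
\item halve again for all $k$ (for $n\ge 2$ this always lands in an identity block, and for $7\le k\le 11$ it gives $3^2$, $4^2$ or $5^2$);
\item for $k\in\{12,13\}$, instead remove $2$ coins to reach $10^1 12^1$ or $11^1 13^1$.
\end{itemize}
Your bit-$1$ branch is then unnecessary. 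With these two local corrections, your argument becomes the paper's proof.
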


\begin{remark}
    We observe that $\varphi_1(x)=\varphi_0(x)\oplus 1$
    for $x\ge 6$.
\end{remark}

\begin{proof}
By Theorem \ref{thm:TwoHeapPposition}, $0^1 2^1 1^1$ is a $\P$-position.
We show that $1^2 3^1$ is a $\P$-position.
Suppose Player 1 removes 1 coin from a pile of height either 1 or 3
to obtain $1^1 3^1$ or $1^2 2^1$.
Player 2 removes 1 coin from the opposing pile to obtain $1^1 2^1$,
which is a $\P$-position by Theorem \ref{thm:TwoHeapPposition}.
Suppose, from $y\in\{2,\, 3\}$, Player 1 removes $y$ coins from the pile of height 3
to obtain $1^2 (3-y)^1$.
Player 2 applies the halving operation to $1^2 (3-y)^1$ to obtain $0^1$.
Suppose Player 1 applies the halving operation to $1^2 3^1$ to obtain $1^1$.
Player 2 removes 1 coin from the pile of height 1 to obtain $0^1$.
Thus,  $1^2 3^1$ is a $\P$-position.

We want to show that $4^1 5^1 1^1$ is a $\P$-position.
 We observe that for all $y <4$, $\varphi_1(y)<2x$.
Suppose Player 1 applies a standard Nim move to $4^1 5^1 1^1$.
By Remark \ref{rem:BeginInduction}, we may assume that
  the resulting position is either
    $4^2 1^1$ or $4^1 5^1$.
    Suppose Player 1 removes 1 coin from the pile of height of either 5 or 1
    to obtain $4^2 1^1$ or $4^1 5^1$.
    Player 2 removes 21 coin fro  the opposing pile to obtain $4^2$,
    which is a $\P$-position by Theorem \ref{thm:TwoHeapPposition}.
Suppose Player 1 applies the halving operation to $4^1 5^1 1^1$ to obtain $2^2$.
Player 2 removes 1 coin from a pile of height 2 to obtain $1^1 2^1$,
which is a $\P$-position by Theorem \ref{thm:TwoHeapPposition}.
Thus,  $4^1 5^1 1^1$ is a $\P$-position.

 We want to show that $(4x+2+r)^1 (4x+5-r)^1 1^1$ is
  $\mathcal{P}$-position for $x\in\{1,\, 2\}$ and $r\in\{0,1\}$.
We observe that for all $y<4x+2$,
  $\varphi_1(y)<4x+2$.
  Suppose Player 1 applies a standard Nim move to $(4x+2+r)^1 (4x+5-r)^1 1^1$.
By Remark \ref{rem:BeginInduction}, we may assume that
  the resulting position is either
 $y^1 (4x+5-r)^1 1^1$ for $4x+2\le y <4x+2+r$,
or $(4x+2+r)^1 y^1 1^1$ for $4x+2\le y <4x+5-r$, 
  or $(4x+2+r)^1 (4x+5-r)^1$.

Assume $r=0$. Suppose Player 1 removes 1 coin from
the pile of height either $4x+5$ or $1$ to 
obtain $(4x+2)^1 (4x+4)^1 1^1$ or $(4x+2)^1 (4x+5)^1$.
Player 2 removes $1$ coin from the opposing pile
to obtain $(4x+2)^1 (4x+4)^1$, which is a $\mathcal{P}$-position
by Theorem \ref{thm:TwoHeapPposition}.

Assume $r\in\{0,\, 1\}$.
Suppose, for $y\in\{2,\, 3\}$, Player 1 removes $y-r$ coins from
the pile of height $4x+5-r$ to obtain $(4x+2+r)^1 (4x+5-y)^1 1^1$.
Player 2 applies the halving operation to obtain $(2x+1)^2$,
which is a $\mathcal{P}$-position
by Theorem \ref{thm:TwoHeapPposition}.
  Suppose Player 1 applies the halving operation to  $(4x+2+r)^1 (4x+5-r)^1 1^1$ to obtain $(2x+1)^1 (2x+2)^1$.
  Player 2 removes $1$ coin from the pile of height $2x+2$ to obtain $(2x+1)^2$,
which is a $\mathcal{P}$-position
by Theorem \ref{thm:TwoHeapPposition}.
Hence, $(4x+2)^1 (4x+5)^1 1^1$ is a $\mathcal{P}$-position.

Assume $r=1$.
Suppose Player 1 removes $1$ coin from
the pile of height either $4x+3$ or $1$ to obtain $(4x+2)^1 (4x+4)^1 1^1$ or $(4x+3)^1 (4x+4)^1$.
Player 2 removes $1$ coin from the opposing pile to obtain $(4x+2)^1 (4x+4)^1$,
which is a $\mathcal{P}$-position by Theorem \ref{thm:TwoHeapPposition}.
Therefore, $(4x+3)^1 (4x+4)^1 1^1$ is a $\mathcal{P}$-position.

Suppose $7\cdot 2^{2n-1} \le 2x < 7\cdot 2^{2n}$ for some 
    integer 
    $n\ge 1$ and, for all nonnegative integers $y<2x$,
  $y^1 z^1 1^1$ is a $\mathcal{P}$-position if and 
  only if $z=\varphi_1(y)$.
  We need to establish that $(2x)^1 (2x+1)^1 1^1$ is a 
  $\mathcal{P}$-position.
 After applying a standard Nim move to 
 $(2x)^1 (2x+1)^1 1^1$, the resulting position is
 either $y^1 (2x+1)^1 1^1$ for
  $0\le y <2x$, $(2x)^1 y^1 1^1$ for $0\le y <2x+1$,
  or $(2x)^1 (2x+1)^1$.
  We observe that for all $y<2x$,
  $\varphi_1(y)< 2x$.
  
  Suppose that Player 1 removes either at least 1 coin from the pile of height $2x$
  or at least 2 coins from the pile of height $2x+1$ to obtain $y^1 z^1 1^1$, for
  $0\le y <2x$ and $z\in\{2x,\, 2x+1\}$.
  Player 2 removes $z-\varphi_1(y)$ coins from the 
  pile of height $z$ to obtain 
  $y^1 (\varphi_1(y))^1 1^1$,
  which is a $\mathcal{P}$-position 
  by the inductive hypothesis.
  Suppose Player $1$ removes 1 coin from the pile
  of height either $2x+1$ or $1$ to obtain $(2x)^2 1^1$
  or $(2x)^1 (2x+1)^1$.
  Player 2 removes 1 coin
  from the opposing pile to obtain $(2x)^2$,
  which is a $\mathcal{P}$-position
  by Theorem~\ref{thm:TwoHeapPposition}.
Suppose Player 1 applies the halving operation to
$(2x)^1 (2x+1)^1 1^1$ to obtain $(x)^2$ 
  where $7\cdot 2^{2n-2} \le x <  7\cdot 2^{2n-1}$.
  Except for $x\in\{12,13\}$,
  Player 2 applies the halving operation to obtain  
  the position $\bigl(\lfloor\tfrac{x}{2}\rfloor\bigr)^2$,
  where either $3\le  \lfloor \tfrac{x}{2}\rfloor  \le 5$ or
  $7\cdot 2^{2n-3} \le \lfloor \tfrac{x}{2}\rfloor <  7\cdot 2^{2n-2}$ for $n\ge 2$,
  which is a $\mathcal{P}$-position 
 by Theorem~\ref{thm:TwoHeapPposition}.
  When $x\in\{12,13\}$, Player 2 removes 2 coins from one of
  the piles of height $x$ to obtain $(x-2)^1 x^1$,
  which is listed as a $\P$-position in Table \ref{tab:abPpoisitions}.
Therefore, $(2x)^1 (2x+1)^1 1^1$ 
is a $\mathcal{P}$-position.

Suppose $7\cdot 2^{2n} \le 4x < 7\cdot 2^{2n+1}$ for some 
    integer 
    $n\ge 1$ and, for all nonnegative integers $y<4x$,
  $y^1 z^1 1^1$ is a $\mathcal{P}$-position if and 
  only if $z=\varphi_1(y)$.
  We need to establish that $(4x+r)^1 (4x+3-r)^1 1^1$ is
  $\mathcal{P}$-position for $r\in\{0,1\}$.
 After applying a standard Nim move to 
 $(4x+r)^1 (4x+3-r)^1 1^1$, the resulting position is
 either $y^1 (4x+3-r)^1 1^1$ for
  $0\le y <4x+r$, $(4x+r)^1 y^1 1^1$ for $0\le y <4x+3-r$,
  or $(4x+r)^1 (4x+3-r)^1$.
  We observe that for all $y<4x$,
  $\varphi_1(y)< 4x$.
  Suppose that Player 1 removes at least $r+1$ coins from
  the pile of height $4x+r$ or at least $4-r$ coins
  from the pile of height $4x+3-r$ to obtain
  $y^1 z^1 1^1$, for
  $0\le y <4x$ and $z\in\{4x+r,\, 4x+3-r\}$.
  Player 2 removes $z-\varphi_1(y)$ coins from the 
  pile of height $z$ to obtain 
  $y^1 (\varphi_1(y))^1 1^1$,
  which is a $\mathcal{P}$-position 
  by the inductive hypothesis.
  
Assume $r=0$. Suppose Player 1 removes 1 coin from
the pile of height either $4x+3$ or $1$ to 
obtain $(4x)^1 (4x+2)^1 1^1$ or $(4x)^1 (4x+3)^1$.
Player 2 removes $1$ coin from the opposing pile
to obtain $(4x)^1 (4x+2)^1$, which is a $\mathcal{P}$-position
by Theorem \ref{thm:TwoHeapPposition}.

Assume $r\in\{0,\, 1\}$.
Suppose, for $y\in\{2,\, 3\}$, Player 1 removes $y-r$ coins from
the pile of height $4x+3-r$ to obtain $(4x+r)^1 (4x+3-y)^1 1^1$.
Player 2 applies the halving operation to obtain $(2x)^2$,
which is a $\mathcal{P}$-position
by Theorem \ref{thm:TwoHeapPposition}.
  Suppose Player 1 applies the halving operation to  $(4x+r)^1 (4x+3-r)^1 1^1$ to obtain $(2x)^1 (2x+1)^1$.
  Player 2 removes $1$ coin from the pile of height $2x+1$ to obtain $(2x)^2$,
which is a $\mathcal{P}$-position
by Theorem \ref{thm:TwoHeapPposition}.
Hence, $(4x)^1 (4x+3)^1 1^1$ is a $\mathcal{P}$-position.

Finally, assume $r=1$.
Suppose Player 1 removes $1$ coin from
the pile of height either $4x+1$ or $1$ to obtain $(4x)^1 (4x+2)^1 1^1$ or $(4x+1)^1 (4x+2)^1$.
Player 2 removes $1$ coin from the opposing pile to obtain $(4x)^1 (4x+2)^1$,
which is a $\mathcal{P}$-position.
Therefore, $(4x+1)^1 (4x+2)^1 1^1$ is a $\mathcal{P}$-position.

Suppose $a,b\ge 14$.
Let $m$ be the positive integer such that $7\cdot 2^m\le a<7\cdot 2^{m+1}$.
If $m$ is odd, $\gamma_1(a)=1$.
If $m$ is even, $\gamma_1(a)=3$.
Thus, $a^1 b^1 1^1$ is a $\P$-position if and only if $a\oplus b=\gamma_1(a)$.\qed
\end{proof}

\begin{remark}\label{rem:BeginInduction}
    The proofs of our results on $\varphi_c(x)$ are by mathematical induction.
    We first establish the base case.
    There will be a threshold value $b_0$ for which the 
    behavior of the $\P$-positions of $a^1 b^1 c^1$ has regular
    behavior for $a,b\ge b_0$.
    The games $a^1 b^1 c^1$ that are $\P$-positions
    for $a,b<b_0$ will be listed in a table in the
    form of $(a,b)$. We will leave the details of
    establishing the base case to the reader.

    For the inductive step, we want to show that $a^1 \big(\varphi_c(a)\big)^1 c^1$
    is a $\P$-position assuming that for all $x<a$,
    $x^1 y^1 c^1$ is a $\P$-position if and only if     
    $y=\varphi_c(x)$.
    We will observe that there exists a positive integer $a_0$
    such that for all $y<a_0$, $\varphi_c(y)<a_0$.
    Let $b=\varphi_c(a)$.
    Suppose Player 1 applies a standard Nim move on $a^1 b^1 c^1$.
    The resulting position is either 
    $y^1 b^1 c^1$ for $0\le y<a$, $a^1 y^1 c^1$ for $0\le y<b$,
    or $a^1 b^1 y^1$ for $0\le y<c$.
    Suppose Player 1 removes either at least $a-a_0+1$
    coins from the pile of height $a$ or at least $b-a_0+1$
    coins from the pile of height $b$ to obtain
    $y^1 z^1 c^1$ for $0\le y<a_0$ and $z\in\{a,\, b\}$.
    Then Player 2 removes $z-\varphi_c(y)$ coins from
    the pile of height $z$ to obtain $y^1 (\varphi_c(y))^1 c^1$,
    which is a $\P$-position by the inductive hypothesis.
    Thus, we may assume that the position after Player 1 applies a
    standard Nim move to $a^1 b^1 c^1$ is either
    $y^1 b^1 c^1$ for $a_0\le y<a$,
    $a^1 y^1 c^1$ for $a_0\le y<b$,
    or $a^1 b^1 y^1$ for $0\le y<c$.
\end{remark}

\begin{table}[t]
\centering
\begin{tabular}{ccccccccccccccc}
\multicolumn{10}{c}{List of games $a^1 b^1 2^1$ that are
$\mathcal{P}$-positions} \\
\hline
(0,1) &(1,0)  &(2,2)  &(3,4)  &(4,3)   &(5,6)
&(6,5)  &(7,7)  &(8,8)  &(9,9)  
\\[0.025in]
(10,10) &(11,11)  &(12,12)  &(13,13)  &(14,18)  &(15,19)
&(16,20)  &(17,21)  &(18,14)  &(19,15)  
\\[0.025in]
(20,16) &(21,17)  &(22,26)  &(23,27)  &(24,28)   &(25,29)
&(26,22)  &(27,23)  &(28,24)  &(29,25)  
\\[0.025in]
(30,30) &(31,31)  &(32,32)  &(33,33)  &(34,34)   &(35,35)
&(36,36)  &(37,37)  &(38,38)  &(39,39)  
\\[0.025in]
(40,40) &(41,41)  &(42,42)  &(43,43)  &(44,44)   &(45,45)
&(46,46)  &(47,47)  &(48,48)  &(49,49)  
\\[0.025in]
(50,50) &(51,51)  &(52,52)  &(53,53)  &(54,54)   &(55,55)
&(56,58)  &(57,59)  &(58,56)  &(59,57)  
\\[0.025in]
(60,62) &(61,63)  &(62,60)  &(63,61)  &(64,66)   &(65,67)
&(66,64)  &(67,65)  &(68,70)  &(69,71)  
\\[0.025in]
(70,68) &(71,69)  &(72,74)  &(73,75)  &(74,72)   &(75,73)
&(76,78)  &(77,79)  &(78,76)  &(79,77)  
\\[0.025in]
\end{tabular} 
\caption{$\mathcal{P}$-position games $a^1 b^1 2^1$ 
listed as $(a,b)$.}
\label{tab:ab2Ppoisitions}
\end{table}

The $\mathcal{P}$-positions of the three-pile
games $a^1 \, b^1 \, 2^1$, for $0\le a \le 79$,
are listed in Table \ref{tab:ab2Ppoisitions}
as the ordered pair $(a,b)$.
We characterize the three-pile games $a^1 b^1 2^1$ that
are $\mathcal{P}$-positions in Theorem \ref{thm:ThreeHeapab2Pposition}.
In the next two theorems, we need to verify that the game
$(2x)^1 (2x+1)^1 1^1$  is an $\N$-position when 
$7\cdot 2^{2n} \le 2x < 7\cdot 2^{2n+1}$ 
    for some positive integer $n$. 
    
\begin{lemma}\label{lem:HalveCase0}
    Suppose $x$ is an integer such that $7\cdot 2^{2n} \le 2x < 7\cdot 2^{2n+1}$ 
    for some positive integer $n$. 
    Then the game $(2x)^1 (2x+1)^1 1^1$  is an $\N$-position.
\end{lemma}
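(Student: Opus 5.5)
The plan is to use the second bullet of Remark \ref{rem:FindPposition}: exhibit one move from $(2x)^1 (2x+1)^1 1^1$ that reaches a $\P$-position. I will take that move to be the halving operation, and then check the result against Theorem \ref{thm:TwoHeapPposition}.

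First, I compute the halved position. Halving replaces each pile by the floor of half its height. We have $\lfloor \tfrac{2x}{2}\rfloor = x$, $\lfloor \tfrac{2x+1}{2}\rfloor = x$ and $\lfloor \tfrac12\rfloor = 0$. So the resulting position is $x^2$, since the pile of height $0$ may be ignored.

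Second, I check that $x^2$ is a $\P$-position. Dividing the hypothesis $7\cdot 2^{2n}\le 2x< 7\cdot 2^{2n+1}$ by $2$ gives $7\cdot 2^{2n-1}\le x<7\cdot 2^{2n}$, with $n\ge 1$, so $x\ge 14$. This is exactly the range of part (1) of Theorem \ref{thm:TwoHeapPposition}. That theorem gives $\varphi_0(x)=x$, so $x^1 x^1 = x^2$ is a $\P$-position. Hence Player 1 wins by halving, and $(2x)^1 (2x+1)^1 1^1$ is an $\N$-position.

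The only real obstacle is choosing the right move. Nim moves do not obviously help. For example, deleting the pile of height $1$ leaves $(2x)^1(2x+1)^1$, which is not a $\P$-position, because $\varphi_0(2x)=2x\oplus 2\ne 2x+1$ in this range by part (2) of Theorem \ref{thm:TwoHeapPposition}. By contrast, halving lands in the range where $\varphi_0$ is the identity, because the parity of the exponent in the bound $7\cdot 2^{m}$ flips when the piles are halved. So the whole argument reduces to this index bookkeeping, which I have checked above.
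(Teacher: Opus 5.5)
Your proof is correct, but it takes a different route from the paper's.

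\textbf{The paper's route.} The paper does not exhibit a winning move. It applies Theorem~\ref{thm:ThreeHeapab1Pposition} directly to $a=2x$, which lies in the range of part (2) there. This gives $\varphi_1(2x)=(2x)\oplus 3\in\{2x-1,\,2x+3\}$, so $\varphi_1(2x)\ne 2x+1$. The ``if and only if'' clause of that theorem (uniqueness of $\varphi_1(2x)$) then shows that $(2x)^1(2x+1)^1 1^1$ is not a $\P$-position.

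\textbf{Your route.} You note that halving sends both large piles to $x$ and empties the pile of height $1$. After halving, $x$ lies in the range of part (1) of Theorem~\ref{thm:TwoHeapPposition}, so $x^2$ is a $\P$-position.

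\textbf{What each buys.} Your argument is more elementary. It uses only the two-pile theorem, not the full classification of $\varphi_1$, which is itself built on the two-pile theorem. It also names the explicit winning move. The paper's argument is a one-line corollary once Theorem~\ref{thm:ThreeHeapab1Pposition} is available. It follows the pattern the paper reuses in Lemmas~\ref{lem:HalveCase}, \ref{lem:HalveCase2}, and~\ref{lem:HalveCase3}: show that the XOR of the two large piles misses the value the classification requires.
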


\begin{proof}
  Since $\varphi_1(2x)=(2x)\oplus 3\in\{2x-1,2x+3\}$,
  $\varphi_1(2x)\ne 2x+1$.
  Thus, by Theorem~\ref{thm:ThreeHeapab1Pposition},
  $(2x)^1 (2x+1)^1 1^1$ is an $\mathcal{N}$-position.
\end{proof}

\begin{theorem}\label{thm:ThreeHeapab2Pposition}
The first 30 values of $\varphi_2$, in the form of 
$(a,\varphi_2(a))$, are displayed in Table \ref{tab:ab2Ppoisitions}.
   Let $x$ be a positive integer. 
   \begin{enumerate}
  \item Suppose either $30\le x<56$ or $7\cdot 2^{2n}\le x < 7\cdot 2^{2n+1}$ for some integer $n\ge 2$.
       Then 
       \begin{equation*}
         \varphi_2(x)=x.  
       \end{equation*}
       \item Suppose $7\cdot 2^{2n-1} \le x < 7\cdot 2^{2n}$ 
       for some integer $n\ge 2$.
       Then 
       \begin{equation*}
        \varphi_2(x)= x \oplus 2.   
       \end{equation*}
   \end{enumerate}
   Hence, $a^1 b^1 2^1$ is a $\mathcal{P}$-position if and only if
   $b=\varphi_2(a)$.
    Furthermore, for $a,b\ge 30$,
   $a^1 b^1 2^1$ is a $\P$-position if and only if
    \begin{equation}\label{eqn:2PileResult}
       a\oplus b= 1 + (-1)^{\lfloor \log_2(a/7)\rfloor+1}.
    \end{equation}
\end{theorem}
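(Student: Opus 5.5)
We argue by strong induction on $a$, following the template of Remark~\ref{rem:BeginInduction} and the proofs of Theorems~\ref{thm:TwoHeapPposition} and~\ref{thm:ThreeHeapab1Pposition}. The entries of Table~\ref{tab:ab2Ppoisitions} with $a<30$ are the base case, checked by hand or by the Maple computation. For the inductive step, fix $a\ge 30$ and assume that for all $y<a$, $y^1 z^1 2^1$ is a $\P$-position if and only if $z=\varphi_2(y)$. Set $b=\varphi_2(a)$, which is $a$ or $a\oplus 2$.

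We split into two regimes. In the first, $a\in[30,56)$ or $a\in[7\cdot 2^{2n},7\cdot 2^{2n+1})$ with $n\ge 2$; here we show $a^2 2^1$ is a $\P$-position. In the second, $a\in[7\cdot 2^{2n-1},7\cdot 2^{2n})$ with $n\ge 2$; writing $a=4x+r$ with $r\in\{0,1\}$, we show $(4x+r)^1(4x+2+r)^1 2^1$ is a $\P$-position. In each case the interval is a union of blocks $\{4x,\dots,4x+3\}$ closed under $\oplus 2$. Hence $\varphi_2$ maps $[0,a_0)$ into itself, where $a_0=a$ in the first regime and $a_0=4x$ in the second. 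By Remark~\ref{rem:BeginInduction}, a Nim move by Player~1 that lands below $a_0$ in one of the large piles is answered by restoring $\varphi_2$.

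The remaining Nim moves are handled as follows.
\begin{itemize}
\item[$\bullet$] Moves on the pile of height $2$ give $a^1b^1 1^1$ or $a^1b^1$. From $a^2 1^1$, Player~2 removes the single coin to reach $a^2$. From $a^2$ itself, we need a move to a $\P$-position; since $a\ge 30$, Player~2 can use Theorem~\ref{thm:TwoHeapPposition} and reply by a Nim move to $a^1\varphi_0(a)^1$ whenever $\varphi_0(a)<a$, or otherwise answer by halving.
\item[$\bullet$] In the second regime, one uses that $\varphi_0(a)=a$ and $\varphi_1(a)=a\oplus1$ there (the interval $[7\cdot2^{2n-1},7\cdot 2^{2n})$ is case (1) of both Theorems~\ref{thm:TwoHeapPposition} and~\ref{thm:ThreeHeapab1Pposition}). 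From $(4x+r)^1(4x+2+r)^1 1^1$, Player~2 moves to $(4x+r)^1(4x+r\oplus1)^1 1^1$, which is a $\P$-position. From $(4x+r)^1(4x+2+r)^1 0^1$, Player~2 moves to $(4x+r)^2$.
\item[$\bullet$] Small moves inside the block, such as $(4x)^1(4x+1)^1 2^1$, are answered exactly as in Theorem~\ref{thm:TwoHeapPposition}: either remove one coin from the opposite pile, or halve. Halving sends the position to $(2x)^2 1^1$ or $(2x)^1(2x+1)^1 1^1$, and one checks via Theorem~\ref{thm:ThreeHeapab1Pposition} or Lemma~\ref{lem:HalveCase0} that Player~2 can reach a $\P$-position from these.
\end{itemize}

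The main obstacle is Player~1's halving move, which sends $a^1 b^1 2^1$ to $y^1 y'^1 1^1$ with $y=\lfloor a/2\rfloor$.
\begin{itemize}
\item[$\bullet$] In the first regime with $n\ge2$, we have $y\in[7\cdot 2^{2n-1},7\cdot 2^{2n})$, where $\varphi_1(y)=y\oplus 1\neq y$. So $y^2 1^1$ is an $\N$-position, and Player~2 wins by moving to $y^1 (y\oplus1)^1 1^1$.
\item[$\bullet$] In the second regime, the halved position is $(2x)^1(2x+1)^1 1^1$ with $2x\in[7\cdot2^{2n-2},7\cdot 2^{2n-1})$. This is the range where $\varphi_1(2x)=2x\oplus3$, and Lemma~\ref{lem:HalveCase0} gives exactly the needed $\N$-status of the halved position; the lemma is evidently placed just before the theorem for this purpose.
\item[$\bullet$] The exceptional range $30\le a<56$ needs separate handling, since there $y\in[15,28)$, which straddles the threshold $14$. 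I would first try to use Table~\ref{tab:ab1Ppoisitions} directly: $\varphi_1(y)\ne y$ for all $y$, so $y^2 1^1$ is always an $\N$-position.
\end{itemize}

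Finally, the restatement \eqref{eqn:2PileResult} follows exactly as in Corollary~\ref{cor:TwoPileSecond}, by evaluating $1+(-1)^{m+1}\in\{0,2\}$ according to the parity of $m$, where $7\cdot 2^m\le a<7\cdot 2^{m+1}$.
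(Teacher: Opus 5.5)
Your skeleton is the same as the paper's: the two regimes, the reduction of Remark~\ref{rem:BeginInduction}, $\varphi_1(y)\neq y$ for Player~1's halving in the first regime, and Lemma~\ref{lem:HalveCase0} for halving in the second. However, two of your prescribed replies to Nim moves lose.

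\textbf{The first regime ($b=a$).} After Player~1 takes one coin from the pile of height $2$, you have Player~2 remove the last coin to reach $a^2$. But there $\varphi_0(a)=a\oplus 2\neq a$, so $a^2$ is an $\N$-position. Your very next sentence actually exhibits a winning move from it. For instance, $30^2 1^1\to 30^2$ lets Player~1 move to the $\P$-position $28^1 30^1$. What you need is that $a^2 1^1$ is itself an $\N$-position. This holds because $\varphi_1(a)=a\oplus 3\neq a$ by Theorem~\ref{thm:ThreeHeapab1Pposition}. Concretely, Player~2 halves to $(\lfloor a/2\rfloor)^2$. This is a $\P$-position because $\lfloor a/2\rfloor\in[7\cdot 2^{2n-1},7\cdot 2^{2n})$ for some $n\ge 1$, where $\varphi_0$ is the identity.

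\textbf{The second regime ($b=a\oplus 2$).} You answer the moves inside the block as in Theorem~\ref{thm:TwoHeapPposition}, by taking a coin from the opposite pile or by halving. This does not transfer.
\begin{itemize}
\item[$\bullet$] Halving now yields $(2x)^2 1^1$. For $2x\in[7\cdot 2^{2n-2},7\cdot 2^{2n-1})$ we have $\varphi_1(2x)=2x\oplus 3\neq 2x$, so Player~2 would be handing Player~1 an $\N$-position. Your remark that a $\P$-position is reachable from $(2x)^2 1^1$ is true, but that is exactly why it is a losing position to move to.
\item[$\bullet$] The opposite-pile reply works only for the one-coin moves when $r=1$.
\end{itemize}
For example, from $56^1 57^1 2^1$ (reached from the $\P$-position $56^1 58^1 2^1$), your replies give $28^2 1^1$ or $55^1 57^1 2^1$. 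Both are $\N$-positions.

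The missing idea is that the pile of height $2$ absorbs these moves:
\begin{itemize}
\item[$\bullet$] For $r=0$, if Player~1 removes $y\in\{1,2\}$ coins from $4x+2$, Player~2 removes $y$ coins from the pile of height $2$, reaching $(4x)^1(4x+2-y)^1(2-y)^1$.
\item[$\bullet$] For $r=1$, if Player~1 removes $y\in\{2,3\}$ coins from $4x+3$, Player~2 removes $4-y$ coins from the pile of height $2$.
\end{itemize}
These are $\P$-positions because $\varphi_0$ is the identity and $\varphi_1(z)=z\oplus 1$ on $[7\cdot 2^{2n-1},7\cdot 2^{2n})$. These are the same facts you already use in your second bullet for the mirror-image moves on the pile of height $2$.
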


\begin{remark}
   We observe that $\varphi_2(x)=\varphi_0(x)\oplus 2$
    for $x\ge 30$.
\end{remark}

\begin{proof}
By Theorem \ref{thm:TwoHeapPposition},
$0^1 1^1 2^1$ is a $\P$-position.
We show $2^3$ is a $\P$-position.
Suppose, for $y\in\{1,\, 2\}$, Player 1 removes $y$ coins from 
a pile of height 2 to obtain $2^2 (2-y)^1$.
Player 2 removes $3-y$ coins from a pile of height 2 to obtain $1^1 2^2$, 
which is a $\P$-position by Theorem \ref{thm:TwoHeapPposition}.
Suppose Player 1 applies the halving operation to $2^3$
to obtain $1^3$.
Player 2 applies the halving operation to $1^3$ to obtain $0^1$.
Thus, $2^3$ is a $\P$-position.

We show $(2x+1)^1 (2x+2)^1 2^1$ is a $\P$-position for $x\in\{1,\, 2\}$.
We observe that for all $y<2x+1$, $\varphi_2(y)<2x+1$.
Suppose Player 1 applies a standard Nim move to $(2x+1)^1 (2x+2)^1 2^1$.
By Remark \ref{rem:BeginInduction}, we may assume that
the resulting position is
 either  $(2x+1)^2 2^1$ or $(2x+1)^1 (2x+2)^1 y^1$ for $0\le y <2$.
Suppose Player 1 either removes 1 coin from the pile of height $2x+2$
or removes $2$ coins from the pile of height 2 to obtain
$(2x+1)^1 2^1$ or $(2x+1)^1 (2x+2)^1$.
Player 2 applies the opposing move to obtain $(2x+1)^2$,
which is a $\P$-position by Theorem \ref{thm:TwoHeapPposition}.
Suppose Player 1 removes 1 coin from the pile of height 1
to obtain $(2x+1)^1 (2x+2)^1 1^1$.
Player 2 removes $3-x$ coins from a pile of height $2x+1$
to obtain $(2x+1)^1 (3x-2)^1 1^1$, which is a $\P$-position
by Theorem \ref{thm:ThreeHeapab1Pposition}.
Suppose Player 1 applies the halving operation to $(2x+1)^1 (2x+2)^1 2^1$
to obtain $x^1 (x+1)^1 1^1$.
Player 2 removes 1 coin from the pile of height $x$ 
to obtain $(x-1)^1 (x+1)^1 1^1$,
which is a $\P$-position by theorems 
\ref{thm:TwoHeapPposition} and \ref{thm:ThreeHeapab1Pposition}.
Thus,  $(2x+1)^1 (2x+2)^1 2^1$ is a $\P$-position.

We show $x^2 2^1$ is a $\P$-position for $7\le x\le 13$.
We observe that for all $y<x$, $\varphi_2(y)<x$.
Suppose Player 1 applies a standard Nim move to $x^2 2^1$.
By Remark \ref{rem:BeginInduction}, we may assume that
the resulting position is
 either  $x^2 y^1$ for $0\le y <2$.
Suppose Player 1 removes 1 coin from the pile of height 2 to obtain $x^2 1^1$.
If $7\le x\le 11$, Player 2 applies the halving operation to $x^2 1^1$
to obtain $y^2$ where $3\le y le 5$, which
is a $\P$-position by Theorem \ref{thm:TwoHeapPposition}.
If $x\in\{12,\, 13\}$, Player 2 removes $2x-23$ coins from a pile of 
height $x$ to obtain $(23-x)^2 x^1 1^1$, which is a
$\P$-position by Theorem \ref{thm:ThreeHeapab1Pposition}.
Suppose Player 1 removes 2 coins from the pile of height 2 to obtain $x^2$.
If $7\le x\le 11$, Player 2 applies the halving operation to $x^2$
to obtain $y^2$ where $3\le y le 5$, which
is a $\P$-position by Theorem \ref{thm:TwoHeapPposition}.
If $x\in\{12,\, 13\}$, Player 2 removes $2$ coins from a pile of 
height $x$ to obtain $(x-2)^1 x^1$, which is a
$\P$-position by Theorem \ref{thm:TwoHeapPposition}.
Suppose Player 1 applies the halving operation to $x^2 2^1$
to obtain $y^2 1^1$ where $3\le y \le 6$.
If $3\le y\le 5$, Player 2 removes 1 coin from the pile of height $1$
to obtain $y^2$,
which is a $\P$-position by Theorem \ref{thm:TwoHeapPposition}.
If $y=5$, Player 2 applies the halving operation to $6^2 1^1$
to obtain $3^2$, which is a 
$\P$-position by Theorem \ref{thm:TwoHeapPposition}.
Thus, $x^2 2^1$ is a $\P$-position.

Let $x\in\{14,22\}$.
We want to show that $x^1 (x+4)^1 2^1$
is a $\mathcal{P}$-position.
 After applying a standard Nim move on 
 $x^1 (x+4)^1 2^1$, the resulting position is
 either $x^1 y^1 2^1$, for
  $0\le y <x+4$, or $y^1 (x+4)^1 2^1$, for $0\le y <x$,
  or $x^1 (x+4)^1 y^1$, for $y\in\{0,1\}$.
   We observe that for all $y<x$,
  $\varphi_2(y)< x$.
  For the positions $w^1 y^1 2^1$, for
  $0\le y <x$ and $w\in\{x,x+4\}$,
  remove $w-\varphi_2(y)$ coins from the 
  heap of height $w$ to obtain 
  the position
  $y^1 (\varphi_2(y))^1 2^1$,
  which is a $\mathcal{P}$-position by the inductive hypothesis.
  For the position $x^1 (x+z)^1 2^1$ where
  $z\in\{01\}$, remove $2-z$ coins from the
  heap of height $z$ to obtain
the position
  $x^1 (x+z)^1 z^1$, which is a
  $\mathcal{P}$-position by Theorems
  \ref{thm:TwoHeapPposition} and
  \ref{thm:ThreeHeapab1Pposition}.
 For the position $x^1 y^1 2^1$ where
  $y\in\{x+2,x+3\}$, we apply the
  halving operation to obtain
the position
  $z^1 (z+1)^1 1^1$,
  for $z\in\{7,11\}$, which is a
  $\mathcal{P}$-position by Theorem
  \ref{thm:ThreeHeapab1Pposition}.
When we apply the halving operation on
$x^1 (x+4)^1 2^1$, the resulting position is
  $y^1 (y+2)^1 1^1$, for $y\in\{7,11\}$.
  We remove one coin from the heap of height
  $y+2$ to obtain 
  the position
  $y^1 (y+)^1 1^1$, for $y\in\{7,11\}$,  
  which is a
  $\mathcal{P}$-position by Theorem
  \ref{thm:ThreeHeapab1Pposition}.
Therefore, $x^1 (x+4)^1 2^1$ 
is a $\mathcal{P}$-position.
 
  Let $x\in\{15,23\}$.
We want to show that $x^1 (x+4)^1 2^1$
is a $\mathcal{P}$-position.
 After applying a standard Nim move on 
 $x^1 (x+4)^1 2^1$, the resulting position is
 either $x^1 y^1 2^1$, for
  $0\le y <x+4$, or $y^1 (x+4)^1 2^1$, for $0\le y <x$,
  or $x^1 (x+4)^1 y^1$, for $y\in\{0,1\}$.
   We observe that for all $y<x-1$,
  $\varphi_2(y)< x-1$.
  For the positions $w^1 y^1 2^1$, for
  $0\le y <x-1$ and $w\in\{x,x+4\}$,
  remove $w-\varphi_2(y)$ coins from the 
  heap of height $w$ to obtain 
  the position
  $y^1 (\varphi_2(y))^1 2^1$,
  which is a $\mathcal{P}$-position by the inductive hypothesis.
  When we remove one coin from the heap of height 
  either $x$ or $x+4$ from
  the position $x^1 (x+4)^`1 2^1$, 
  the resulting position is 
  either $(x-1)^1 (x+4)^1 2^1$ or
  $x^1 (x+3)^1 2^1$.
  We remove a coin from the opposing heap to obtain 
  the position $(x-1)^1 (x+3)^1 2^1$,
  which is a $\mathcal{P}$-position
  by the inductive hypothesis.
  For the positions $x^1 (x-y)^1 2^1$, 
  for $y\in\{0,1\}$,
  we remove the $2-y$ coins 
  from the heap of height 2 to obtain
  the position $x^1 (x-y)^1 y^1$, for $y\in\{0,1\}$,
  which is a 
  $\mathcal{P}$-position by Theorems
  \ref{thm:TwoHeapPposition} and
  \ref{thm:ThreeHeapab1Pposition}.
  For the positions $x^1 y^1 2^1$ for
  $y\in\{x+1,x+2\}$, we apply the halving
  operation to obtain the position
  $z^1 (z+1)^1 1^1$ for $z\in\{7,11\}$,
  which is a $\mathcal{P}$-position by Theorem
  \ref{thm:ThreeHeapab1Pposition}.
  For the position $x^1 (x+4)^1 y^1$, where
  $y\in\{01\}$, remove $4+y$ coins from the
  heap of height $x+4$ to obtain
the position
  $x^1 (x-y)^1 y^1$, for $y\in\{0,1\}$, which is a
  $\mathcal{P}$-position by Theorems
  \ref{thm:TwoHeapPposition} and
  \ref{thm:ThreeHeapab1Pposition}.
When we apply the halving operation on
$x^1 (x+4)^1 2^1$, the resulting position is
  $y^1 (y+2)^1 1^1$, for $y\in\{7,11\}$.
  We remove one coin from the heap of height
  $y+2$ to obtain 
  the position
  $y^1 (y+1)^1 1^1$, for $y\in\{7,11\}$,  
  which is a
  $\mathcal{P}$-position by Theorem
  \ref{thm:ThreeHeapab1Pposition}.
Therefore, $x^1 (x+4)^1 2^1$ 
is a $\mathcal{P}$-position.

  Let $x\in\{16,24\}$.
We want to show that $x^1 (x+4)^1 2^1$
is a $\mathcal{P}$-position.
 After applying a standard Nim move on 
 $x^1 (x+4)^1 2^1$, the resulting position is
 either $x^1 y^1 2^1$, for
  $0\le y <x+4$, or $y^1 (x+4)^1 2^1$, for $0\le y <x$,
  or $x^1 (x+4)^1 y^1$, for $y\in\{0,1\}$.
   We observe that for all $y<x-2$,
  $\varphi_2(y)< x-2$.
  For the positions $w^1 y^1 2^1$, for
  $0\le y <x-2$ and $w\in\{x,x+4\}$,
  remove $w-\varphi_2(y)$ coins from the 
  heap of height $w$ to obtain 
  the position
  $y^1 (\varphi_2(y))^1 2^1$,
  which is a $\mathcal{P}$-position by the inductive hypothesis.
  When we remove $y$ coins, for $y\in\{1,2\}$ 
  from the heap of height 
  either $x$ or $x+4$ from
  the position $x^1 (x+4)^1 2^1$, 
  the resulting position is 
  either $(x-y)^1 (x+4)^1 2^1$ or
  $x^1 (x+4-y)^1 2^1$.
  We remove $y$ coins from the 
  opposing heap to obtain 
  the position $(x-y)^1 (x+4-y)^1 2^1$,
  for $y\in\{1,2\}$,
  which is a $\mathcal{P}$-position
  by the inductive hypothesis.
  For the positions $x^1 (x+y)^1 2^1$, 
  for $y\in\{0,1\}$,
  we remove the $2-y$ coins 
  from the heap of height 2 to obtain
  the position $x^1 (x+y)^1 y^1$, 
  for $y\in\{0,1\}$, which is a 
  $\mathcal{P}$-position by Theorems
  \ref{thm:TwoHeapPposition} and
  \ref{thm:ThreeHeapab1Pposition}.
  For the position $x^1 y^1 2^1$ for
  $y\in\{x-2,x-1\}$, we apply the halving
  operation to obtain the position
  $z^1 (z+1)^1 1^1$ for $z\in\{8,12\}$,
  which is a $\mathcal{P}$-position by Theorem
  \ref{thm:ThreeHeapab1Pposition}.
  For the position $x^1 (x+4)^1 y^1$, where
  $y\in\{01\}$, remove $4-y$ coins from the
  heap of height $x+4$ to obtain
the position
  $x^1 (x+y)^1 y^1$, for $y\in\{0,1\}$, which is a
  $\mathcal{P}$-position by Theorems
  \ref{thm:TwoHeapPposition} and
  \ref{thm:ThreeHeapab1Pposition}.
When we apply the halving operation on
$x^1 (x+4)^1 2^1$, the resulting position is
  $y^1 (y+2)^1 1^1$, for $y\in\{8,12\}$.
  We remove 3 coins from the heap of height
  $y+2$ to obtain 
  the position
  $y^1 (y-1)^1 1^1$, for $y\in\{8,12\}$,  
  which is a
  $\mathcal{P}$-position by Theorem
  \ref{thm:ThreeHeapab1Pposition}.
Therefore, $x^1 (x+4)^1 2^1$ 
is a $\mathcal{P}$-position.

  Let $x\in\{17,25\}$.
We want to show that $x^1 (x+4)^1 2^1$
is a $\mathcal{P}$-position.
 After applying a standard Nim move on 
 $x^1 (x+4)^1 2^1$, the resulting position is
 either $x^1 y^1 2^1$, for
  $0\le y <x+4$, or $y^1 (x+4)^1 2^1$, for $0\le y <x$,
  or $x^1 (x+4)^1 y^1$, for $y\in\{0,1\}$.
   We observe that for all $y<x-3$,
  $\varphi_2(y)< x-3$.
  For the positions $w^1 y^1 2^1$, for
  $0\le y <x-3$ and $w\in\{x,x+4\}$,
  remove $w-\varphi_2(y)$ coins from the 
  heap of height $w$ to obtain 
  the position
  $y^1 (\varphi_2(y))^1 2^1$,
  which is a $\mathcal{P}$-position by the inductive hypothesis.
  When we remove $y$ coins, for $y\in\{1,2,3\}$ 
  from the heap of height 
  either $x$ or $x+4$ from
  the position $x^1 (x+4)^1 2^1$, 
  the resulting position is 
  either $(x-y)^1 (x+4)^1 2^1$ or
  $x^1 (x+4-y)^1 2^1$.
  We remove $y$ coins from the 
  opposing heap to obtain 
  the position $(x-y)^1 (x+4-y)^1 2^1$,
  for $y\in\{1,2,3\}$,
  which is a $\mathcal{P}$-position
  by the inductive hypothesis.
  For the positions $x^1 (x-y)^1 2^1$, 
  for $y\in\{0,1\}$,
  we remove the $2-y$ coins 
  from the heap of height 2 to obtain
  the position $x^1 (x-y)^1 y^1$, 
  for $y\in\{0,1\}$, which is a 
  $\mathcal{P}$-position by Theorems
  \ref{thm:TwoHeapPposition} and
  \ref{thm:ThreeHeapab1Pposition}.
  For the position $x^1 y^1 2^1$ for
  $y\in\{x-3,x-2\}$, we apply the halving
  operation to obtain the position
  $z^1 (z-1)^1 1^1$ for $z\in\{8,12\}$,
  which is a $\mathcal{P}$-position by Theorem
  \ref{thm:ThreeHeapab1Pposition}.
  For the position $x^1 (x+4)^1 y^1$, where
  $y\in\{01\}$, remove $4+y$ coins from the
  heap of height $x+4$ to obtain
the position
  $x^1 (x-y)^1 y^1$, for $y\in\{0,1\}$, which is a
  $\mathcal{P}$-position by Theorems
  \ref{thm:TwoHeapPposition} and
  \ref{thm:ThreeHeapab1Pposition}.
When we apply the halving operation on
$x^1 (x+4)^1 2^1$, the resulting position is
  $y^1 (y+2)^1 1^1$, for $y\in\{8,12\}$.
  We remove 3 coins from the heap of height
  $y+2$ to obtain 
  the position
  $y^1 (y-1)^1 1^1$, for $y\in\{8,12\}$,  
  which is a
  $\mathcal{P}$-position by Theorem
  \ref{thm:ThreeHeapab1Pposition}.
Therefore, $x^1 (x+4)^1 2^1$ 
is a $\mathcal{P}$-position.

Suppose either $30\le x<56$ or $7\cdot 2^{2n}\le x< 7\cdot 2^{2n+1}$ for some 
integer $n\ge 2$ and,
for all nonnegative integers $y<x$,
  $y^1 z^1 2^1$ is a $\mathcal{P}$-position if and 
  only if $z=\varphi_2(y)$.
We need to establish that $x^2 2^1$ is 
a $\mathcal{P}$-position.
 We observe that $\varphi_2(y)<x$ for all $y <x$.
Suppose Player 1 applies a standard Nim move to $x^2 2^1$.
By Remark \ref{rem:BeginInduction},
we may assume that the resulting position is 
   $x^2 y^1$ for $y\in\{0,1\}$.

Suppose, for $y\in\{1,\, 2\}$, Player 1 removes $y$ coins from the 
pile of height 2 to obtain $x^2 (2-y)^1$.
   The position  $x^2 (2-y)^1$
   is an $\mathcal{N}$-position by Theorems
   \ref{thm:TwoHeapPposition} 
   and \ref{thm:ThreeHeapab1Pposition}.
  Suppose Player 1 applies the halving operation 
  to $x^2 2^1$ to obtain $y^2 1^1$ where $y=\lfloor \tfrac{x}{2}\rfloor$.
  Since either $15\le y<28$ or $7\cdot 2^{2n-1}\le y<7\cdot 2^{2n}$,
  $\varphi_1(y)=y\oplus 1\ne y$ by Theorem \ref{thm:ThreeHeapab1Pposition}.
  Thus, $y^2 1^1$ is an $\mathcal{N}$-position.
  Therefore, $x^2 2^1$ is a $\mathcal{P}$-position.

Suppose $7\cdot 2^{2n-1}\le 4x< 7\cdot 2^{2n}$ for some 
integer $n\ge 2$ and,
for all nonnegative integers $y<4x$,
  $y^1 z^1 2^1$ is a $\mathcal{P}$-position if and 
  only if $z=\varphi_2(y)$.
We want to show that $(4x+r)^1 (4x+2+r)^1 2^1$
is a $\mathcal{P}$-position for $r\in\{0,\, 1\}$.
We observe that for all $y<4x$, $\varphi_2(y)<4x$.
Suppose Player 1 applies a standard Nim move to $(4x+r)^1 (4x+2+r)^1 2^1$.
By Remark \ref{rem:BeginInduction}, we may assume that
the resulting position is
 either $y^1 (4x+2+r)^1 2^1$ for $4x\le y <4x+r$,
 $(4x+r)^1 y^1 2^1$ for $4x\le y <4x+2+r$, 
  or $(4x+r)^1 (4x+2+r)^1 y^1$ for $0\le y <2$.

  Assume $r=0$. 
  Suppose, for $y\in\{1,\, 2\}$, Player 1 removes $y$ coins from the 
  pile of height either $4x+2$ or $2$ to obtain 
  $(4x)^1 (4x+2-y)^1 2^1$ or $(4x)^1 (4x+2)^1 (2-y)^1$.
  Player 2 removes $y$ coins from the opposing pile 
  to obtain the position
$(4x)^1 (4x+2-y)^1 (2-y)^1$,
  which is a $\mathcal{P}$-position 
  by Theorems \ref{thm:TwoHeapPposition} and
  \ref{thm:ThreeHeapab1Pposition}.
  
  Assume $r\in\{0,\, 1\}$.
  Suppose Player 1 applies the halving operation to 
  $(4x+r)^1 (4x+2+r)^1 2^1$ to obtain
  $(2x)^1 (2x+1)^1 1^1$ 
  where $7\cdot 2^{2n-2} \le 2x < 7\cdot 2^{2n-1}$.
  By Lemma~\ref{lem:HalveCase0}, $(2x)^1 (2x+1)^1 1^1$ 
  is an $\N$-position.
  Hence, $(4x)^1 (4x+2)^1 2^1$ is a $\mathcal{P}$-position.

  Assume $r=1$.
Suppose Player 1 removes 1 coin from the pile of height 
either $4x+1$ or $4x+3$
to obtain $(4x)^1 (4x+3)^1 2^1$ or $(4x+1)^1 (4x+2)^1 2^1$.
Player 2 removes 1 coin from the opposing pile to
obtain $(4x)^1 (4x+2)^1 2^1$, which is a 
$\mathcal{P}$-position.
Suppose, for $y\in\{2,\, 3\}$, Player 1 removes $y$ coins 
from the pile of height $4x+3$
to obtain $(4x+1)^1 (4x+3-y)^1 2^1$.
Player 2 removes $4-y$ coins from the pile of height 2 to
obtain $(4x+1)^1 (4x+3-y)^1 (y-2)^1$, which is a 
$\mathcal{P}$-position by Theorems \ref{thm:TwoHeapPposition} and \ref{thm:ThreeHeapab1Pposition}.
Suppose, for $y\in\{1,\, 2\}$, Player 1 removes $y$ coins 
from the pile of height $2$
to obtain $(4x+1)^1 (4x+3)^1 (2-y)^1$.
Player 2 removes $4-y$ coins from the pile of height $4x+3$ to
obtain $(4x+1)^1 (4x+y-1)^1 (2-y)^1$, which is a 
$\mathcal{P}$-position by Theorems \ref{thm:TwoHeapPposition} and \ref{thm:ThreeHeapab1Pposition}.
  Therefore, $(4x+1)^1 (4x+3)^1 2^1$ is a $\mathcal{P}$-position.
  
Suppose $a,b\ge 30$.
Let $m$ be the positive integer such that $7\cdot 2^m\le a<7\cdot 2^{m+1}$.
If $m$ is odd, $\gamma_2(a)=2$.
If $m$ is even, $\gamma_2(a)=0$.
Thus, $a^1 b^1 2^1$ is a $\P$-position if and only if $a\oplus b=\gamma_2(a)$.\qed
\end{proof}

\begin{table}[t]
\centering
\begin{tabular}{ccccccccccccccc}
\multicolumn{10}{c}{List of games $a^1 b^1 3^1$ that are
$\mathcal{P}$-positions} \\
\hline
(0,3) &(1,1)  &(2,4)  &(3,0)  &(4,2)   &(5,7)
&(6,6)  &(7,5)  &(8,9)  &(9,8)  
\\[0.025in]
(10,11) &(11,10)  &(12,13)  &(13,12)  &(14,19)  &(15,18)
&(16,21)  &(17,20)  &(18,15)  &(19,14)  
\\[0.025in]
(20,17) &(21,16)  &(22,27)  &(23,26)  &(24,29)   &(25,28)
&(26,23)  &(27,22)  &(28,25)  &(29,24) 
\\[0.025in]
(30,31) &(31,30)  &(32,33)  &(33,32)  &(34,35)   &(35,34)
&(36,37)  &(37,36)  &(38,39)  &(39,38) 
\\[0.025in]
(40,41) &(41,40)  &(42,43)  &(43,42)  &(44,45)   &(45,44)
&(46,47)  &(47,46)  &(48,49)  &(49,48)  
\\[0.025in]
(50,51) &(51,50)  &(52,53)  &(53,52)  &(54,55)   &(55,54)
&(56,59)  &(57,58)  &(58,57)  &(59,56) 
\\[0.025in]
(60,63) &(61,62)  &(62,61)  &(63,60)  &(64,67)   &(65,66)
&(66,65)  &(67,64)  &(68,71)  &(69,70)  
\\[0.025in]
(70,69) &(71,68)  &(72,75)  &(73,74)  &(74,73)   &(75,72)
&(76,79)  &(77,78)  &(78,77)  &(79,76) 
\\[0.025in]
\end{tabular} 
\caption{$\mathcal{P}$-position games $a^1 b^1 3^1$ 
listed as $(a,b)$.}
\label{tab:ab3Ppoisitions}
\end{table}

The $\mathcal{P}$-positions of the three-pile
games $a^1 \, b^1 \, 3^1$, for $0\le a \le 79$,
are listed in Table \ref{tab:ab3Ppoisitions}
as the ordered pair $(a,b)$.
We characterize the three-pile games $a^1 b^1 3^1$ that
are $\mathcal{P}$-positions in Theorem \ref{thm:ThreeHeapab3Pposition}.

\begin{theorem}\label{thm:ThreeHeapab3Pposition}
The first 30 values of $\varphi_3$, in the form of 
$(a,\varphi_3(a))$, are displayed in Table \ref{tab:ab3Ppoisitions}.
   Let $x$ be a positive integer. 
   \begin{enumerate}
  \item Suppose either $30\le x<56$ or $7\cdot 2^{2n}\le x < 7\cdot 2^{2n+1}$ for some integer $n\ge 2$.
       Then 
       \begin{equation*}
         \varphi_3(x)=x \oplus 1.  
       \end{equation*}
       \item Suppose $7\cdot 2^{2n-1} \le x < 7\cdot 2^{2n}$ 
       for some integer $n\ge 2$.
       Then 
       \begin{equation*}
        \varphi_3(x)= x \oplus 3.   
       \end{equation*}
   \end{enumerate}
   Hence, $a^1 b^1 3^1$ is a $\mathcal{P}$-position if and only if
   $b=\varphi_3(a)$.
   Furthermore, for $a,b\ge 30$,
   $a^1 b^1 3^1$ is a $\P$-position if and only if
    \begin{equation}\label{eqn:3PileResult}
       a\oplus b= 2 + (-1)^{\lfloor \log_2(a/7)\rfloor+1}.
    \end{equation}
\end{theorem}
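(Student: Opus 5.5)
We prove Theorem~\ref{thm:ThreeHeapab3Pposition} by strong induction on $a$, in the format of Remark~\ref{rem:BeginInduction} with threshold $b_0=30$. The pairs with $a<30$ in Table~\ref{tab:ab3Ppoisitions} form the base case. These are checked in the style of the base cases of Theorem~\ref{thm:ThreeHeapab2Pposition}: in each block the responses are either a halving move, which lands on a $\P$-position of $y^1z^1 1^1$ from Theorem~\ref{thm:ThreeHeapab1Pposition} (since $\lfloor 3/2\rfloor=1$), or a reduction of the pile of height $3$ to a height $c'\in\{0,1,2\}$, which lands on a $\P$-position from Theorems~\ref{thm:TwoHeapPposition}, \ref{thm:ThreeHeapab1Pposition} or \ref{thm:ThreeHeapab2Pposition}. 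The key observation is that $\varphi_3(x)=\varphi_2(x)\oplus 1$ for $x\ge 30$. The inductive step therefore closely parallels the proof of Theorem~\ref{thm:ThreeHeapab2Pposition}, with the pair $\{4x,4x+1\}$ or $\{4x+2,4x+3\}$ replaced by the $P$-pair under $\oplus 1$ or $\oplus 3$.

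For the inductive step, write $a=4x+s$ with $b=\varphi_3(a)$ in the same block of four $\{4x,\dots,4x+3\}$. We may take $a_0=4x$, since the values $\varphi_3(y)$ for $y<4x$ stay below $4x$. By Remark~\ref{rem:BeginInduction}, only three kinds of Player 1 moves remain.
\begin{enumerate}
\item \emph{A move inside the block changing $a$ or $b$.} Player 1 reaches $a'^1b^1 3^1$ with $a'$ still in the block and $a'\oplus b\ne\gamma_3$. Player 2 answers in one of three ways: move the other big pile within the block to restore $\oplus$-difference $\gamma_3$; reduce the pile of height $3$ to the $c'$ with $a'\oplus b=\gamma_{c'}(a)$, which is possible because $\gamma_0,\gamma_1,\gamma_2,\gamma_3$ take the four distinct values in $\{0,1,2,3\}$ in each parity case of $\lfloor\log_2(a/7)\rfloor$; or halve.
\item \emph{A reduction of the pile of height $3$ to $c'\in\{0,1,2\}$.} Player 2 changes one big pile within the block so that the $\oplus$-difference becomes $\gamma_{c'}(a)$. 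Such a pile exists within the block and is a legal (decreasing) move.
\item \emph{The halving move.} It yields $(2x+\lfloor s/2\rfloor)^1(2x+\lfloor t/2\rfloor)^1 1^1$. Since $\lfloor (a\oplus b)/2\rfloor\in\{0,1\}$, the result is either $y^2 1^1$ or $(2x)^1(2x+1)^1 1^1$. The first is an $\N$-position because $\varphi_1(y)=y\oplus 1$ or $y\oplus 3$ never equals $y$. The second is handled by Lemma~\ref{lem:HalveCase0} in the range $7\cdot 2^{2n}\le 2x<7\cdot 2^{2n+1}$. In the complementary range, $(2x)^1(2x+1)^1 1^1$ is a $\P$-position, so halving must not produce it. This is the reason $\varphi_3$ there is $x\oplus 3$ (giving halves $\{2x,2x+1\}$ swapped into $\{y,y\}$ shape) rather than $x\oplus 1$, and this is exactly the point to verify.
\end{enumerate}

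The main obstacle is case~(1): showing that every in-block move can be answered in one of these three ways, with the answer decreasing a pile. I would first try the response ``reduce the pile of height $3$'' using the $\gamma_{c'}$ table, and fall back on the symmetric move when the needed $c'$ would exceed $3$. Once the inductive step is done, the final statement for $a,b\ge 30$ follows as in Theorem~\ref{thm:ThreeHeapab2Pposition}. Let $m$ satisfy $7\cdot 2^m\le a<7\cdot 2^{m+1}$. Then $\gamma_3(a)=1$ if $m$ is even and $3$ if $m$ is odd, which matches parts (1) and (2). The ranges $30\le x<56$ and $n\ge 2$ are handled together, since $\lfloor 30/2\rfloor=15$ already lies in the regime covered by Theorem~\ref{thm:ThreeHeapab1Pposition}.
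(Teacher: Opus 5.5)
\textbf{Gap in case~(2).} Your case~(2) claims something false. In the range where $\varphi_3(x)=x\oplus 1$ (which includes $30\le x<56$), we have $\gamma_3=1$, $\gamma_2=0$, $\gamma_1=3$, $\gamma_0=2$. Take the $\P$-position $(4x)^1(4x+1)^1 3^1$ and let Player~1 reduce the pile of height $3$ to $c'\in\{0,1\}$. To reach $\oplus$-difference $\gamma_{c'}\in\{2,3\}$ inside the block, Player~2 would have to turn on the $2^1$ bit of $4x$ or $4x+1$. That bit is $0$ in both, so every such change is an increase.

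Concretely, neither $32^1 33^1$ nor $32^1 33^1 1^1$ has a winning Nim move: $\varphi_0(32)=34$, $\varphi_0(33)=35$, $\varphi_1(32)=35$, $\varphi_1(33)=34$, and $\varphi_0(32)\neq 33$. The only winning reply is to halve. Since $\lfloor c'/2\rfloor=0$, halving gives $(2x)^2$. This is a $\P$-position by Theorem~\ref{thm:TwoHeapPposition}, because $2x$ lies in a range where $\varphi_0$ is the identity. This halving reply is exactly what the paper's proof uses, and it is the idea your case analysis is missing.

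Your claim does hold in the remaining configurations. For the upper pair $(4x+2)^1(4x+3)^1 3^1$, both piles carry the $2^1$ bit. In the $x\oplus 3$ range, $\gamma_3=3$ has both low bits set, so exactly one of $a,b$ carries the leading bit of $\gamma_3\oplus\gamma_{c'}$ and can be decreased.

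\textbf{Case~(1) and smaller slips.} Case~(1), which you single out as the main obstacle, is in fact immediate. Since $a'\neq a$ and $\varphi_3$ is injective, $a'\oplus b\neq\gamma_3$. Because $\gamma_0,\dots,\gamma_3$ is a permutation of $\{0,1,2,3\}$, the required $c'$ lies in $\{0,1,2\}$. So reducing the pile of height $3$ always works and no fallback is needed; this is the argument of Lemma~\ref{lem:NPositions}.

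There are two smaller slips. First, for the block $\{28,\dots,31\}$ your claim that $\varphi_3(y)<4x$ for all $y<4x$ is false, since $\varphi_3(24)=29$. Remark~\ref{rem:BeginInduction} still goes through because $29<30$. Second, your explanation in case~(3) has the two shapes reversed. Halving gives $y^2 1^1$ exactly when $\varphi_3(a)=a\oplus 1$. It gives $(2x)^1(2x+1)^1 1^1$ exactly when $\varphi_3(a)=a\oplus 3$, which is the range where Lemma~\ref{lem:HalveCase0} applies.
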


\begin{remark}\label{rem:PhiFunction}
    We have $\varphi_c(x)=\varphi_0(x)\oplus c$
    for $c\in\{1,2,3\}$ and $x\ge 30$.
\end{remark}

\begin{proof}
It is straight forward to verify that, for 
$0\le x < 30$, $x^1 y^1 3^1$ 
is a $\mathcal{P}$-position if and only if
$y=\varphi_3(x)$. We leave the details of the 
proof to the reader.
This establishes the base case.

Suppose either $30\le 2x<56$ or $7\cdot 2^{2n}\le 2x< 7\cdot 2^{2n+1}$ for some 
integer $n\ge 2$ and, 
for all nonnegative integers $y<2x$,
  $y^1 z^1 3^1$ is a $\mathcal{P}$-position if and 
  only if $z=\varphi_3(y)$.
We need to establish that $(2x)^1 (2x+1)^1 3^1$ is 
a $\mathcal{P}$-position.
 We observe that for all $y <2x$, $\varphi_3(y)<2x$.
Suppose Player 1 applies a standard Nim move to $(2x)^1 (2x+1)^1 3^1$.
By Remark \ref{rem:BeginInduction}, we may assume that
  the resulting position is either
    $(2x)^1 y^1 3^1$ for $2x\le y <2x+1$ or 
  $(2x)^1 (2x+1)^1 y^1$ for $0\le y <3$.
  
Suppose Player 1 removes 1 coin from the pile of height either $2x+1$ or $3$
to obtain $(2x)^2 3^1$ or $(2x)^1 (2x+1)^1 2^1$.
Player 2 removes 1 coin from the opposing pile to obtain $(2x)^2 2^1$,
which is a $\mathcal{P}$-position by Theorem \ref{thm:ThreeHeapab2Pposition}.
Suppose, for $y\in\{2,\, 3\}$, Player 1 removes $y$ coins from the pile of height 3
to obtain $(2x)^1 (2x+1)^1 (3-y)^1$.
Player 2 applies the halving operation 
to $(2x)^1 (2x+1)^1 (3-y)^1$ to obtain $x^2$,
which is a $\mathcal{P}$-position by Theorem \ref{thm:TwoHeapPposition}.
Suppose Player 1 applies the halving operation to $(2x)^1 (2x+1)^1 3^1$
to obtain $x^2 1^1$.
Player 2 removes 1 coin from the pile of height 1 to obtain $x^2$,
which is a $\mathcal{P}$-position by Theorem \ref{thm:TwoHeapPposition}.
Hence, $(2x)^1 (2x+1)^1 3^1$ is a $\mathcal{P}$-position.

 Suppose $7\cdot 2^{2n-1}\le 4x< 7\cdot 2^{2n}$ for some 
integer $n\ge 2$ and,  
for all nonnegative integers $y<4x$,
  $y^1 z^1 3^1$ is a $\mathcal{P}$-position if and 
  only if $z=\varphi_3(y)$.
We want to show that $(4x+r)^1 (4x+3-r)^1 3^1$
is a $\mathcal{P}$-position for $r\in\{0,\, 1\}$.
We observe that for all $y<4x$,
  $\varphi_3(y)<4x$.
  Suppose Player 1 applies a standard Nim move to $(4x+r)^1 (4x+3-r)^1 3^1$.
By Remark \ref{rem:BeginInduction}, we may assume that
  the resulting position is either
 $y^1 (4x+3-r)^1 3^1$ for $4x\le y <4x+r$,
or $(4x+r)^1 y^1 3^1$ for $4x\le y <4x+3-r$, 
  or $(4x+r)^1 (4x+3-r)^1 y^1$ for $0\le y <3$.
  
  Assume $r=0$.
  Suppose, for $y\in\{1,\, 2,\, 3\}$, Player 1 removes $y$ coins from
  the pile of height either $4x+3$ or $3$ to obtain
  either $(4x)^1 (4x+3-y)^1 3^1$ or $(4x)^1 (4x+3)^1 (3-y)^1$.
  Player 2 removes $y$ coins from the opposing pile to 
  obtain $(4x)^1 (4x+3-y)^1 (3-y)^1$, which is a $\mathcal{P}$-position by 
  Theorems \ref{thm:TwoHeapPposition}, 
  \ref{thm:ThreeHeapab1Pposition}, and 
  \ref{thm:ThreeHeapab2Pposition}.

   Assume $r=\{0,\, 1\}$.
  Suppose Player 1 applies the halving operation 
  to $(4x+r)^1 (4x+3-r)^1 3^1$
  to obtain $(2x)^1 (2x+1)^1 1^1$ 
  where $7\cdot 2^{2n-2} \le 2x < 7\cdot 2^{2n-1}$.
   By Lemma~\ref{lem:HalveCase0}, $(2x)^1 (2x+1)^1 1^1$ 
  is an $\N$-position.
  Hence, $(4x)^1 (4x+3)^1 3^1$ is a $\mathcal{P}$-position.

  Assume $r=1$. Suppose Player 1 removes 1 coin from the pile
  of height either $4x+1$ or $3$ to obtain $(4x)^1 (4x+2)^1 3^1$
  or $(4x+1)^1 (4x+2)^1 2^1$.
  Player 2 removes 1 coin from the opposing pile to obtain
  $(4x)^1 (4x+2)^1 2^1$, which is a $\mathcal{P}$-position by 
  Theorem \ref{thm:ThreeHeapab2Pposition}.
 Suppose, for $y\in\{1,\, 2\}$, Player 1 removes $y$ coins from the 
 pile of height $4x+2$ to obtain $(4x+1)^1 (4x+2-y)^1 3^1$.
 Player 2 removes $4-y$ coins from the pile of height $3$
 to obtain  $(4x+1)^1 (4x+2-y)^1 (y-1)^1$, which is a
 $\mathcal{P}$-position by Theorems \ref{thm:TwoHeapPposition} 
 and \ref{thm:ThreeHeapab1Pposition}.
 Suppose, for $y\in\{2,\, 3\}$, Player 1 removes $y$ coins from the 
 pile of height $3$ to obtain $(4x+1)^1 (4x+2)^1 (3-y)^1$.
 Player 2 removes $4-y$ coins from the pile of height $4x+2$
 to obtain  $(4x+1)^1 (4x+y-2)^1 (3-y)^1$, which is a
 $\mathcal{P}$-position by Theorems \ref{thm:TwoHeapPposition} 
 and \ref{thm:ThreeHeapab1Pposition}.
Therefore, $(4x+1)^1 (4x+2)^1 3^1$ is a $\mathcal{P}$-position.

Suppose $a,b\ge 30$.
Let $m$ be the positive integer such that $7\cdot 2^m\le a<7\cdot 2^{m+1}$.
If $m$ is odd, $\gamma_3(a)=3$.
If $m$ is even, $\gamma_3(a)=1$.
Thus, $a^1 b^1 3^1$ is a $\P$-position if and only if $a\oplus b=\gamma_3(a)$.\qed
\end{proof}

\begin{table}[t]
\centering
\begin{tabular}{ccccccccccccccc}
\multicolumn{10}{c}{List of games $a^1 b^1 4^1$ that are
$\mathcal{P}$-positions} \\
\hline
(0,4) &(1,5)  &(2,3)  &(3,2)  &(4,0)   &(5,1)
&(6,7)  &(7,6)  &(8,10)  &(9,11)  
\\[0.025in]
(10,8) &(11,9)  &(12,14)  &(13,15)  &(14,12)  &(15,13)
&(16,18)  &(17,19)  &(18,16)  &(19,17)  
\\[0.025in]
(20,22) &(21,23)  &(22,20)  &(23,21)  &(24,26)   &(25,27)
&(26,24)  &(27,25)  &(28,28)  &(29,29)  
\\[0.025in]
(30,32) &(31,33)  &(32,30)  &(33,31)  &(34,36)   &(35,37)
&(36,34)  &(37,35)  &(38,40)  &(39,41)  
\\[0.025in]
(40,38) &(41,39)  &(42,44)  &(43,45)  &(44,42)   &(45,43)
&(46,48)  &(47,49)  &(48,46)  &(49,47)  
\\[0.025in]
(50,52) &(51,53)  &(52,50)  &(53,51)  &(54,56)   &(55,57)
&(56,54)  &(57,55)  &(58,60)  &(59,61)  
\\[0.025in]
(60,58) &(61,59)  &(62,64)  &(63,65)  &(64,62)   &(65,63)
&(66,68)  &(67,69)  &(68,66)  &(69,67)  
\\[0.025in]
(70,72) &(71,73)  &(72,70)  &(73,71)  &(74,76)   &(75,77)
&(76,74)  &(77,75)  &(78,80)  &(79,81)  
\\[0.025in]
\end{tabular} 
\caption{$\mathcal{P}$-position games $a^1 b^1 4^1$ 
listed as $(a,b)$.}
\label{tab:ab4Ppoisitions}
\end{table}

In the next two lemmas,
we use Theorems \ref{thm:TwoHeapPposition},
\ref{thm:ThreeHeapab1Pposition}, \ref{thm:ThreeHeapab2Pposition},
and \ref{thm:ThreeHeapab3Pposition}
to determine when some 
three-pile games are $\mathcal{N}$-positions.

\begin{lemma}\label{lem:NPositions}
Let $a$, $b$, and $c$ be integers such that $a,b\ge 30$ and $c\ge 4$.
If $a\oplus b\in\{0,\, 1,\, 2,\, 3\}$, then the three-pile game
$a^1 b^1 c^1$ is an $\mathcal{N}$-position.
\end{lemma}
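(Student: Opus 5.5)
To show $a^1 b^1 c^1$ is an $\N$-position it suffices, by Remark \ref{rem:FindPposition}, to find one move to a $\P$-position. Since $c\ge 4$, the natural move is a standard Nim move on the pile of height $c$, reducing it to some $d\in\{0,1,2,3\}$. The target position $a^1 b^1 d^1$ then falls under Corollary \ref{cor:TwoPileSecond} (for $d=0$) or Theorems \ref{thm:ThreeHeapab1Pposition}, \ref{thm:ThreeHeapab2Pposition}, \ref{thm:ThreeHeapab3Pposition} (for $d=1,2,3$). Because $a,b\ge 30$, each of these results says $a^1 b^1 d^1$ is a $\P$-position if and only if $a\oplus b=\gamma_d(a)$.

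So the whole proof reduces to one claim: for every $a\ge 30$, the map $d\mapsto\gamma_d(a)$ is a bijection from $\{0,1,2,3\}$ onto $\{0,1,2,3\}$.

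To verify it, let $m$ be the integer with $7\cdot 2^m\le a<7\cdot 2^{m+1}$, so $m=\lfloor\log_2(a/7)\rfloor$. Then read off the values from the case analysis at the end of each proof:
\begin{itemize}
\item If $m$ is odd, then $\gamma_0(a)=0$, $\gamma_1(a)=1$, $\gamma_2(a)=2$, $\gamma_3(a)=3$.
\item If $m$ is even, then $\gamma_0(a)=2$, $\gamma_1(a)=3$, $\gamma_2(a)=0$, $\gamma_3(a)=1$.
\end{itemize}
In both cases the values are a permutation of $\{0,1,2,3\}$. Equivalently, by Remark \ref{rem:PhiFunction} and Theorem \ref{thm:TwoHeapPposition}, $\varphi_d(a)=\varphi_0(a)\oplus d$ with $\varphi_0(a)\in\{a,a\oplus 2\}$, so $a\oplus\varphi_d(a)$ ranges over $\{0,1,2,3\}$ as $d$ does.

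Now, given $a\oplus b=s\in\{0,1,2,3\}$, choose the unique $d$ with $\gamma_d(a)=s$. Then $b=\varphi_d(a)$, so $a^1 b^1 d^1$ is a $\P$-position. Since $d\le 3<c$, Player 1 reaches it by removing $c-d$ coins from the pile of height $c$. Hence $a^1 b^1 c^1$ is an $\N$-position.

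The only delicate point is that the $d=0$ case needs $a,b\ge 14$ for Corollary \ref{cor:TwoPileSecond}, and the $d=2,3$ cases need $a,b\ge 30$ for the stated forms of Theorems \ref{thm:ThreeHeapab2Pposition} and \ref{thm:ThreeHeapab3Pposition}. The hypothesis $a,b\ge 30$ covers both. Also, $m$ is determined by $a$ alone, and all four characterizations use $a$ in the exponent, so the same $m$ serves for every $d$.
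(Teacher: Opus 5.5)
Your proof is correct and takes essentially the same approach as the paper. Both arguments remove $c-d$ coins from the pile of height $c$, where $d\in\{0,1,2,3\}$ is the unique value making $a^1 b^1 d^1$ a $\mathcal{P}$-position. Such a $d$ always exists because $\varphi_d(a)=\varphi_0(a)\oplus d$ with $\varphi_0(a)\in\{a,\,a\oplus 2\}$, so every value of $a\oplus b$ in $\{0,1,2,3\}$ is covered; your explicit permutation table for $\gamma_d(a)$ is just an unpacked version of that step.
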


\begin{proof}
By Theorem \ref{thm:TwoHeapPposition},
we have $\varphi_0(a)=a$ when $7\cdot 2^{2n-1}\le a<7\cdot 2^{2n}$
for some positive integer $n$ and
$\varphi_0(a)=a\oplus 2$ when $7\cdot 2^{2n}\le a<7\cdot 2^{2n+1}$
for some positive integer $n$.
By Remark \ref{rem:PhiFunction},
$\varphi_k(a)=\varphi_0(a)\oplus k$ for $k\in\{0,\, 1,\, 2,\, 3\}$.
Let $\ell=a\oplus b$. 
Then there exists an integer $k\in\{0,\, 1,\, 2,\, 3\}$
such that $\varphi_k(a)=a\oplus \ell$.
Given the game $a^1 b^1 c^1$,
Player 1 removes $c-k$ coins from the pile of height $c$
to obtain $a^1 b^1 k^1$, which is a $\P$-position
by either Theorem \ref{thm:TwoHeapPposition},
\ref{thm:ThreeHeapab1Pposition}, \ref{thm:ThreeHeapab2Pposition},
or \ref{thm:ThreeHeapab3Pposition}
because $\varphi_k(a)=a\oplus \ell=b$.
Therefore, $a^1 b^1 c^1$ is an $\N$-position.
\end{proof}

\begin{lemma}\label{lem:NPositions2}
Let $a$, $b$, and $c$ be integers such that $a,b\ge 30$ and $c\le 3$.
If $a\oplus b\ge 4$, then the three-pile game
$a^1 b^1 c^1$ is an $\mathcal{N}$-position.
\end{lemma}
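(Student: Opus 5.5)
The plan is to exhibit an explicit Nim move to a $\P$-position, in the same spirit as the proof of Lemma~\ref{lem:NPositions}. The difference is that here we reduce one of the two large piles instead of the small pile of height $c$. Since $a\oplus b\ge 4$, we have $a\ne b$. By symmetry we may assume $a>b$.

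The candidate target is the position $\big(\varphi_c(b)\big)^1\, b^1\, c^1$. It is a $\P$-position by Theorem~\ref{thm:TwoHeapPposition} when $c=0$, and by Theorems~\ref{thm:ThreeHeapab1Pposition}, \ref{thm:ThreeHeapab2Pposition}, or \ref{thm:ThreeHeapab3Pposition} when $c\in\{1,2,3\}$. Here we use that $\varphi_c$ is an involution, so $b^1 (\varphi_c(b))^1 c^1$ being a $\P$-position is the same statement. It remains to check that this position is reachable from $a^1 b^1 c^1$, that is, that $\varphi_c(b)<a$. Player 1 would then remove $a-\varphi_c(b)$ coins from the pile of height $a$.

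For the inequality we use that $b\ge 30$. By Remark~\ref{rem:PhiFunction} together with Theorem~\ref{thm:TwoHeapPposition}, we get $\varphi_c(b)=\varphi_0(b)\oplus c=b\oplus \gamma_c(b)$. For $c=0$ this is Corollary~\ref{cor:TwoPileSecond}, with $\gamma_0(b)\in\{0,2\}$. In every case $\gamma_c(b)\in\{0,1,2,3\}$, so $\varphi_c(b)$ agrees with $b$ in every bit of position $\ge 2$.

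Now let $2^k$ be the highest bit in which $a$ and $b$ differ. Because $a\oplus b\ge 4$, we have $k\ge 2$. Because $a>b$, $a$ has a $1$ and $b$ has a $0$ in position $k$, and $a$ and $b$ agree above position $k$. Hence $\varphi_c(b)$ agrees with $a$ above bit $k$ and has a $0$ in bit $k$ where $a$ has a $1$. This gives $\varphi_c(b)<a$, so the move is legal and $a^1 b^1 c^1$ is an $\N$-position.

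The only delicate point is the bookkeeping showing that $\varphi_c(b)=b\oplus\gamma_c(b)$ with $\gamma_c(b)\le 3$ for all $b\ge 30$ and all $0\le c\le 3$. This includes the range $30\le b<56$, which belongs to case (1) of Theorems~\ref{thm:ThreeHeapab2Pposition} and \ref{thm:ThreeHeapab3Pposition}. I would check this directly from the explicit formulas in those theorems rather than rely only on Remark~\ref{rem:PhiFunction}.
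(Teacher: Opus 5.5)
Your argument is correct, but it takes a different route from the paper's.

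The paper argues by contraposition in one line. The classification theorems for $c\in\{0,1,2,3\}$ say that $a^1b^1c^1$ is a $\P$-position only if $b=\varphi_c(a)$, and for $a,b\ge 30$ this forces $a\oplus b=\gamma_c(a)\in\{0,1,2,3\}$. So $a\oplus b\ge 4$ rules out $\P$, and the position must be $\N$. That uses the ``only if'' half of those theorems (the uniqueness from Remark~\ref{rem:ThreePilePposition}) together with the $\P$/$\N$ dichotomy, and it never names a winning move.

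You use only the ``if'' half, namely that $b^1(\varphi_c(b))^1c^1$ is a $\P$-position. You then exhibit the winning move explicitly: cut the larger pile down to $\varphi_c$ of the smaller. The price is the bit comparison showing $\varphi_c(b)<a$. In effect you inline the uniqueness argument of Remark~\ref{rem:ThreePilePposition} for this case. That matches the constructive style of Lemma~\ref{lem:NPositions} and tells the player what to do, whereas the paper's version is shorter.

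Your check that $\varphi_c(b)=b\oplus\gamma_c(b)$ with $\gamma_c(b)\le 3$ agrees with the explicit formulas, including the range $30\le b<56$.

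One small point: you do not need $\varphi_c$ to be an involution. The positions $(\varphi_c(b))^1b^1c^1$ and $b^1(\varphi_c(b))^1c^1$ are the same position.
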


\begin{proof}
    By Theorems \ref{thm:TwoHeapPposition},
    \ref{thm:ThreeHeapab1Pposition}, \ref{thm:ThreeHeapab2Pposition},
    and \ref{thm:ThreeHeapab3Pposition},
    if $a^1 b^1 c^1$ is a $\P$-position,
    then $a\oplus b\in\{0,\, 1,\, 2,\, 3\}$.
    Since $a\oplus b\ge 4$, $a^1 b^1 c^1$ is an $\N$-position.
\end{proof}

We introduce a variation of bitwise addition that we will find useful in our next results
on three-pile games of Halve Nim.

\begin{definition}\label{defn:ShiftedBitwiseSum}
    Let $n$ be a fixed positive integer. 
    For nonnegative integers $x$ and $y$, the 
    \emph{$n$-shifted bitwise sum} of $x$ and $y$ is given by
    \begin{equation*}
        x \oplus_n y = \big((x+n) \oplus y\big)  -n.
\end{equation*}
\end{definition}

\begin{remark}\label{rem:ShifetedBitWiseSum}
    The $n$-shifted bitwise sum is not commutative.
    Furthermore, $ x=y \oplus_n  k$
    is equivalent to 
    \begin{equation*}
        (x+n) \oplus (y+n) = k.
\end{equation*}
\end{remark}

The $\mathcal{P}$-positions of the three-pile
games $a^1 \, b^1 \, 4^1$, for $0\le a \le 79$,
are listed in Table \ref{tab:ab4Ppoisitions}
as the ordered pair $(a,b)$.
We characterize the three-pile games $a^1 b^1 4^1$ that
are $\mathcal{P}$-positions in Theorem \ref{thm:ThreeHeapab4Pposition}.
In the next two theorems, we need to verify that the game
$(2x+1)^1 (2x+2)^1 2^1$ is an $\N$-position.

\begin{lemma}\label{lem:HalveCase}
    Suppose $x\ge 7$. Then the game $(2x+1)^1 (2x+2)^1 2^1$ is an $\N$-position.
\end{lemma}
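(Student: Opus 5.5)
The plan is to mirror the proof of Lemma~\ref{lem:HalveCase0}. By Theorem~\ref{thm:ThreeHeapab2Pposition}, $a^1 b^1 2^1$ is a $\P$-position if and only if $b=\varphi_2(a)$. So it suffices to show that $\varphi_2(2x+1)\ne 2x+2$ for every $x\ge 7$. Then $(2x+1)^1 (2x+2)^1 2^1$ is not a $\P$-position, hence it is an $\N$-position. No explicit winning move needs to be exhibited.

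I will split into two cases according to the size of $2x+1$.

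\emph{Case $x\ge 15$, so $2x+1\ge 31$.} Here Theorem~\ref{thm:ThreeHeapab2Pposition} gives $\varphi_2(2x+1)\in\{2x+1,\ (2x+1)\oplus 2\}$. Note that $(2x+1)\oplus 2$ only flips bit $1$ of $2x+1$, so it equals $2x+3$ or $2x-1$. In every subcase $\varphi_2(2x+1)$ is odd, so it cannot equal the even number $2x+2$. This is the same parity and bit-flip argument used for $\varphi_1(2x)=(2x)\oplus 3$ in Lemma~\ref{lem:HalveCase0}.

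\emph{Case $7\le x\le 14$, so $15\le 2x+1\le 29$.} Here I will read off $\varphi_2(2x+1)$ from Table~\ref{tab:ab2Ppoisitions}, which covers this base range:
\[
\varphi_2(15)=19,\ \varphi_2(17)=21,\ \varphi_2(19)=15,\ \varphi_2(21)=17,
\]
\[
\varphi_2(23)=27,\ \varphi_2(25)=29,\ \varphi_2(27)=23,\ \varphi_2(29)=25.
\]
Again each value is odd, so none equals $2x+2$.

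There is no real obstacle. The only care needed is to check that the table values in the range $15\le 2x+1\le 29$ avoid $2x+2$, which the parity observation settles at once. This also explains the hypothesis $x\ge 7$: it is needed because $\varphi_2(5)=6$, so for $x=2$ the position $5^1 6^1 2^1$ is a $\P$-position.
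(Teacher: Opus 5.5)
Your proof is correct. For $x\ge 15$ it is the paper's argument: the paper says $(2x+1)\oplus(2x+2)$ is odd and hence not in $\{0,2\}$, which is equivalent to your observation that $\varphi_2(2x+1)$ is odd.

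For $7\le x\le 14$ you take a different route.
\begin{itemize}
\item The paper exhibits an explicit winning move for each $x$ and checks the resulting positions against Tables~\ref{tab:ab1Ppoisitions} and~\ref{tab:ab2Ppoisitions}. When $x\in\{7,11\}$ it halves to $x^1(x+1)^1 1^1$. When $x\in\{8,12\}$ it removes $3$ coins from the pile of height $2x+1$. When $x\in\{9,10,13,14\}$ it removes $5$ coins from the pile of height $2x+2$.
\item You instead read $\varphi_2(2x+1)$ off Table~\ref{tab:ab2Ppoisitions}. You then use the uniqueness of the $\P$-partner, which is the ``only if'' half of Theorem~\ref{thm:ThreeHeapab2Pposition} and comes from Remark~\ref{rem:ThreePilePposition}.
\end{itemize}

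Your version is uniform. One parity observation covers both ranges, and Table~\ref{tab:ab1Ppoisitions} is never needed.

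The paper's version is constructive. For $x\in\{7,8,11,12\}$ we have $\varphi_2(2x+1)>2x+2$. In those cases your argument only shows that a winning move exists, because the $\P$-position $(2x+1)^1(\varphi_2(2x+1))^1 2^1$ can move to the given position. It does not say what the winning move is; it is the halving move or the $3$-coin removal. For $x\in\{9,10,13,14\}$ your argument implicitly gives the same $5$-coin move as the paper.

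One small correction to your closing remark: $x\ge 7$ is not the sharp threshold. The values $\varphi_2(7)=7$, $\varphi_2(9)=9$, $\varphi_2(11)=11$, $\varphi_2(13)=13$ are also odd, so your argument proves the conclusion for $3\le x\le 6$ as well. The failure at $x=2$ (and $x=1$) shows only that some lower bound is needed. The paper assumes $x\ge 7$ because that is where the lemma is applied, in Theorems~\ref{thm:ThreeHeapab4Pposition} and~\ref{thm:ThreeHeapab5Pposition}, where $4x+2\ge 30$.
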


\begin{proof}
    First, assume $x\ge 15$.
 Since $2x+1$ and $2x+2$ have opposite parity,
 $(2x+1)\oplus (2x+2)$ is odd.
 Thus, $(2x+1)\oplus (2x+2)\not\in\{0,\, 2\}$.
 By Theorem \ref{thm:ThreeHeapab2Pposition},
 $(2x+1)^1 (2x+2)^1 2^1$ is an $\mathcal{N}$-position.

Second, assume $7\le x \le 14$.
If $x\in\{7,\,11\}$, Player 1 applies the halving operation to
 $(2x+1)^1 (2x+2)^1 2^1$ to obtain  $x^1 (x+1)^1 1^1$.
 If $x\in\{8,\, 12\}$, Player 1 removes 3 coins from the pile 
 of height $2x+1$ to obtain  $(2x-2)^1 (2x+2)^1 2^1$.
 If $x\in\{9,\, 10,\, 13,\, 14\}$, Player 1 removes 5 coins 
 from the pile of height $2x+2$ to obtain $(2x+1)^1 (2x-3)^1 2^1$.
 One observes from Tables \ref{tab:ab1Ppoisitions} and
 \ref{tab:ab2Ppoisitions} that each of the Player 1 moves results 
 in a $\mathcal{P}$-position.
\end{proof}

\begin{theorem}\label{thm:ThreeHeapab4Pposition}
The first 30 values of $\varphi_4$, in the form of 
$(a,\varphi_4(a))$, are displayed in Table \ref{tab:ab4Ppoisitions}.
Suppose $x\ge 30$. 
       Then 
       \begin{equation*}
        \varphi_4(x)= x \oplus_2 2.   
       \end{equation*}
   Hence, $a^1 b^1 4^1$ is a $\mathcal{P}$-position if and only if
   $b=\varphi_4(a)$.
   Furthermore, for $a,b\ge 30$,
   $a^1 b^1 4^1$ is a $\mathcal{P}$-position if and only if
   $(a+2)\oplus (b+2)=2$.
\end{theorem}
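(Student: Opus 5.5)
We argue by strong induction on $a$, following the scheme of Remark~\ref{rem:BeginInduction}. The base case $0\le a<30$ is Table~\ref{tab:ab4Ppoisitions}. For $x\ge 30$, $\varphi_4(x)=x\oplus_2 2=((x+2)\oplus 2)-2$ means $\varphi_4(x)=x+2$ if $x\equiv 2,3\pmod 4$ and $\varphi_4(x)=x-2$ if $x\equiv 0,1\pmod 4$. So the claimed $\P$-positions with $a,b\ge 30$ are exactly
\[
(4k+2+r)^1\,(4k+4+r)^1\,4^1,\qquad r\in\{0,1\},\ 4k+2\ge 30 ,
\]
that is, $k\ge 7$. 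Since $\varphi_c$ is an involution, it suffices to show these positions are $\P$-positions. Uniqueness of $b$ (Remark~\ref{rem:ThreePilePposition}) then gives the ``if and only if'' statement. The final reformulation $(a+2)\oplus(b+2)=2$ is just Remark~\ref{rem:ShifetedBitWiseSum}.

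Fix $k\ge 7$, set $a_0=4k+2$, and assume the result for all $y<4k+2$. From the table (the last entries are $(28,28)$, $(29,29)$) and the pairing above, $\varphi_4(y)<4k+2$ for all $y<4k+2$. Hence, by Remark~\ref{rem:BeginInduction}, any Nim move that takes a pile below $a_0$ is answered by restoring $\varphi_4$. We treat $r=0$ first and then $r=1$, using the $r=0$ case in the latter. Two tools dispose of most moves.

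\emph{Halving.} Halving either position gives $(2k+1)^1(2k+2)^1 2^1$ with $k\ge 7$. This is an $\N$-position by Lemma~\ref{lem:HalveCase}.

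\emph{Reducing the pile of height $4$.} The two large piles $4k+2+r$ and $4k+4+r$ lie in different blocks of four, so their Nim sum is at least $4$. Reducing the pile of height $4$ to $y\le 3$ therefore gives an $\N$-position by Lemma~\ref{lem:NPositions2}.

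\emph{Moves inside the block.} For $r=0$, the only remaining Nim moves reduce $4k+4$ to $4k+3$ or $4k+2$. The resulting pair has Nim sum $1$ or $0$, with third pile $4\ge 4$, so Lemma~\ref{lem:NPositions} shows these are $\N$-positions. For $r=1$, reducing $4k+5$ to $4k+3$ or $4k+2$ again gives Nim sum in $\{0,1,2,3\}$, so Lemma~\ref{lem:NPositions} applies. The remaining moves, $4k+3\to 4k+2$ and $4k+5\to 4k+4$, are answered by removing one coin from the opposite large pile. This reaches $(4k+2)^1(4k+4)^1 4^1$, which is a $\P$-position by the $r=0$ case.

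The main obstacle is making sure Lemma~\ref{lem:NPositions} really applies to every in-block move whose resulting Nim sum lies in $\{0,\dots,3\}$. That lemma needs both piles at least $30$, which holds because $4k+2\ge 30$. It also relies, via Remark~\ref{rem:PhiFunction}, on the $c\le 3$ theorems being valid at heights $\ge 30$. A secondary check is that the threshold $a_0=4k+2$ is legitimate at the first inductive step $k=7$, which the table confirms.
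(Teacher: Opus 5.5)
Your proof is correct and is essentially the paper's own argument. You induct on the blocks $(4k+2+r)^1(4k+4+r)^1 4^1$ with $k\ge 7$, handle halving by Lemma~\ref{lem:HalveCase}, moves on the pile of height $4$ by Lemma~\ref{lem:NPositions2}, and in-block reductions of the larger pile by Lemma~\ref{lem:NPositions}; for $r=1$, you answer the two single-coin moves by mirroring to $(4k+2)^1(4k+4)^1 4^1$.
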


\begin{proof}
It is straight forward to verify that, for 
$0\le x < 30$, $x^1 y^1 4^1$ 
is a $\mathcal{P}$-position if and only if
$y=\varphi_4(x)$. We leave the details of the 
proof to the reader.
This establishes the base case.

We need to show that $(4x+2+r)^1 (4x+4+r)^1 4^1$
is a $\mathcal{P}$-position for $x\ge7$ and $r\in\{0,\, 1\}$.
We assume that for $y<4x+2$, $y^1 z^1 4^1$ is a 
$\mathcal{P}$-position if and only if $z=\varphi_4(y)$.
We observe that for all $y<4x+2$,
  $\varphi_4(y)<4x+2$.
Suppose Player 1 applies a standard Nim move to $(4x+2+r)^1 (4x+4+r)^1 4^1$.
By Remark \ref{rem:BeginInduction}, we may assume that 
the resulting position is
 either $y^1 (4x+4+r)^1 4^1$ for $4x+2\le y <4x+2+r$, 
 $(4x+2+r)^1 y^1 4^1$ for $4x+2\le y <4x+4+r$, 
  or $(4x+2+r)^1 (4x+4+r)^1 y^1$ for $0\le y <4$.

  Suppose, for $y\in\{1,\, 2\}$, Player 1 removes $y+r$ coins from
  the pile of height $4x+4+r$ 
  to obtain $(4x+2+r)^1 (4x+4-y)^1 4^1$.
  We observe that $(4x+2+r)\oplus (4x+4-y)\in\{0,\, 1\}$.
  By Lemma \ref{lem:NPositions},  $(4x+2+r)^1 (4x+4-y)^1 4^1$
  is an $\mathcal{N}$-position.

  Suppose, for $1\le y\le 4$, Player 1 removes $y$ coins from
  the pile of height $4$ 
  to obtain $(4x+2+r)^1 (4x+4+r)^1 (4-y)^1$.
  Since $4x+2+r<4(x+1)\le 4x+4+r$,
  the binary representations of  $4x+2+r$ and  $4x+4+r$
  differ in the $2^2$ place value.
  Thus, $(4x+2+r)\oplus (4x+4+r)\ge 4$.
  By Lemma \ref{lem:NPositions2},  $(4x+2+r)^1 (4x+4+r)^1 (4-y)^1$
  is an $\mathcal{N}$-position.
  
 Suppose Player 1 applies the halving operation 
  to $(4x+2+r)^1 (4x+4+r)^1 4^1$
  to obtain $(2x+1)^1 (2x+2)^1 2^1$.
  By Lemma \ref{lem:HalveCase},
 $(2x+1)^1 (2x+2)^1 2^1$ is an $\mathcal{N}$-position.
 Hence, $(4x+2)^1 (4x+4)^1 4^1$ is a $\mathcal{P}$-position.

 Assume $r=1$.
 Suppose Player 1 removes 1 coin from the pile of
 height either $4x+3$ or $4x+5$ to obtain 
 $(4x+2)^1 (4x+5)^1 4^1$ or $(4x+3)^1 (4x+4)^1 4^1$.
 Player 2 removes 1 coin from the opposing pile to obtain
 $(4x+2)^1 (4x+4)^1 4^1$, which is a $\mathcal{P}$-position.
 Therefore, $(4x+3)^1 (4x+5)^1 4^1$ is a $\mathcal{P}$-position.
 
 By Remark \ref{rem:ShifetedBitWiseSum}, $b=\varphi_4(a)=a\oplus_2 2$
 is equivalent to $(a+2)\oplus (b+2)=2$.\qed
\end{proof}

\begin{table}[t]
\centering
\begin{tabular}{ccccccccccccccc}
\multicolumn{10}{c}{List of games $a^1 b^1 5^1$ that are
$\mathcal{P}$-positions} \\
\hline
(0,5) &(1,4)  &(2,6)  &(3,7)  &(4,1)   &(5,0)
&(6,2)  &(7,3)  &(8,11)  &(9,10)  
\\[0.025in]
(10,9) &(11,8)  &(12,15)  &(13,14)  &(14,13)  &(15,12)
&(16,19)  &(17,18)  &(18,17)  &(19,16)  
\\[0.025in]
(20,23) &(21,22)  &(22,21)  &(23,20)  &(24,27)   &(25,26)
&(26,25)  &(27,24)  &(28,29)  &(29,28)  
\\[0.025in]
(30,33) &(31,32)  &(32,31)  &(33,30)  &(34,37)   &(35,36)
&(36,35)  &(37,34)  &(38,41)  &(39,40)  
\\[0.025in]
(40,39) &(41,38)  &(42,45)  &(43,44)  &(44,43)   &(45,42)
&(46,49)  &(47,48)  &(48,47)  &(49,46)  
\\[0.025in]
(50,53) &(51,52)  &(52,51)  &(53,50)  &(54,57)   &(55,56)
&(56,55)  &(57,54)  &(58,61)  &(59,60)  
\\[0.025in]
(60,59) &(61,58)  &(62,65)  &(63,64)  &(64,63)   &(65,62)
&(66,69)  &(67,68)  &(68,67)  &(69,66)  
\\[0.025in]
(70,73) &(71,72)  &(72,71)  &(73,70)  &(74,77)   &(75,76)
&(76,75)  &(77,74)  &(78,81)  &(79,80)  
\\[0.025in]
\end{tabular} 
\caption{$\mathcal{P}$-position games $a^1 b^1 5^1$ 
listed as $(a,b)$.}
\label{tab:ab5Ppoisitions}
\end{table}

The $\mathcal{P}$-positions of the three-pile
games $a^1 \, b^1 \, 5^1$, for $0\le a \le 79$,
are listed in Table \ref{tab:ab5Ppoisitions}
as the ordered pair $(a,b)$.
We characterize the three-pile games $a^1 b^1 5^1$ that
are $\mathcal{P}$-positions in Theorem \ref{thm:ThreeHeapab5Pposition}.

\begin{theorem}\label{thm:ThreeHeapab5Pposition}
The first 30 values of $\varphi_5$, in the form of 
$(a,\varphi_5(a))$, are displayed in Table \ref{tab:ab5Ppoisitions}.
Suppose $x\ge 30$. 
       Then 
       \begin{equation*}
        \varphi_5(x)= x \oplus_2 3.   
       \end{equation*}
   Hence, $a^1 b^1 5^1$ is a $\mathcal{P}$-position if and only if
   $b=\varphi_5(a)$.
   Furthermore, for $a,b\ge 30$,
   $a^1 b^1 5^1$ is a $\mathcal{P}$-position if and only if
   $(a+2)\oplus (b+2)=3$.
\end{theorem}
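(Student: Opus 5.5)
The plan is to mirror the proof of Theorem \ref{thm:ThreeHeapab4Pposition}. First I rewrite the claimed formula. By Remark \ref{rem:ShifetedBitWiseSum}, $b=x\oplus_2 3$ means $(x+2)\oplus(b+2)=3$. For $x\ge 30$ the values then pair off in blocks of four: writing the block as $4x+2,\dots,4x+5$ with $x\ge 7$, so that $4x+2\equiv 30 \pmod 4$ and $(4x+2)+2$ is divisible by $4$, the claimed $\P$-positions are
\begin{equation*}
(4x+2)^1\,(4x+5)^1\,5^1 \quad\text{and}\quad (4x+3)^1\,(4x+4)^1\,5^1 .
\end{equation*}
This agrees with Table \ref{tab:ab5Ppoisitions}, for instance $(30,33)$ and $(31,32)$. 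The base case $0\le x<30$ is read off from the table, as in the earlier theorems. For the inductive step I assume that for all $y<4x+2$, $y^1z^1 5^1$ is a $\P$-position if and only if $z=\varphi_5(y)$. I also note that $\varphi_5(y)<4x+2$ for all such $y$. By Remark \ref{rem:BeginInduction} with $a_0=4x+2$, any Nim move that drops a large pile below $4x+2$ is answered by the inductive hypothesis.

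For $(4x+2+r)^1(4x+5-r)^1 5^1$ with $r\in\{0,1\}$, the remaining Nim moves are handled as follows.
\begin{enumerate}
\item Reducing the pile of $5$ to $y\le 3$. Since $4x+2+r<4(x+1)\le 4x+5-r$, the two large piles differ in the $2^2$ bit, so their Nim sum is at least $4$. Lemma \ref{lem:NPositions2} then gives an $\N$-position.
\item Reducing the pile of $5$ to $4$. Here $\bigl((4x+2+r)+2\bigr)\oplus\bigl((4x+5-r)+2\bigr)=3\neq 2$, so Theorem \ref{thm:ThreeHeapab4Pposition} gives an $\N$-position.
\item Case $r=0$, moving $4x+5$ to $4x+2$ or $4x+3$. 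The Nim sum of the two large piles is then $0$ or $1$, so Lemma \ref{lem:NPositions} gives an $\N$-position.
\item Case $r=0$, moving $4x+5$ to $4x+4$. Player 2 removes one coin from the pile of $5$, reaching $(4x+2)^1(4x+4)^1 4^1$. This is a $\P$-position by Theorem \ref{thm:ThreeHeapab4Pposition}, since $(4x+4)\oplus(4x+6)=2$.
\item Case $r=1$, moving $4x+4$ to $4x+3$. The Nim sum becomes $0$, so Lemma \ref{lem:NPositions} gives an $\N$-position.
\item Case $r=1$, moving $4x+3$ to $4x+2$. Player 2 again reduces the pile of $5$ to $4$, reaching $(4x+2)^1(4x+4)^1 4^1$, which is a $\P$-position.
\end{enumerate}

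Finally, halving $(4x+2+r)^1(4x+5-r)^1 5^1$ gives $(2x+1)^1(2x+2)^1 2^1$ with $x\ge 7$. This is an $\N$-position by Lemma \ref{lem:HalveCase}. Hence both positions are $\P$-positions. Uniqueness of $b$ follows from Remark \ref{rem:ThreePilePposition}, and the restatement $(a+2)\oplus(b+2)=3$ follows from Remark \ref{rem:ShifetedBitWiseSum}.

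The only step that is not a direct citation of an earlier lemma is case (4) (and its twin, case (6)). Neither Lemma \ref{lem:NPositions} nor Lemma \ref{lem:NPositions2} applies there, because the Nim sum is $6$ and the third pile is $5\ge 4$. The idea is to answer by lowering the third pile to $4$ and use Theorem \ref{thm:ThreeHeapab4Pposition}. I would double-check this step, together with the bookkeeping that every move into the range $\ge 4x+2$ falls into one of the listed cases. The base range $x<30$ is otherwise a finite check against the table.
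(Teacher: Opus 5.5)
Your proof is essentially the paper's. It uses the same pairing $(4x+2+r)^1(4x+5-r)^1 5^1$ with $x\ge 7$, the same appeals to Lemmas~\ref{lem:NPositions}, \ref{lem:NPositions2} and \ref{lem:HalveCase}, and the same reply $(4x+2)^1(4x+4)^1 4^1$ in your cases (4) and (6). The one cosmetic difference is in your case (2), where Player 1 lowers the third pile to $4$. You cite Theorem~\ref{thm:ThreeHeapab4Pposition} directly (shifted Nim sum $3\ne 2$), whereas the paper exhibits Player 2's one-coin reply reaching $(4x+2)^1(4x+4)^1 4^1$.

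Your bookkeeping check does turn up one omitted move, in the case $r=1$. Player 1 may take two coins from the pile of height $4x+4$, reaching $(4x+3)^1(4x+2)^1 5^1$, and none of your cases lists this. It is an $\N$-position by Lemma~\ref{lem:NPositions}, since $(4x+3)\oplus(4x+2)=1$; the paper covers it in its case ``remove $y-r$ coins, $y\in\{2,3\}$''. With that line added, your argument is complete.
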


\begin{proof}
It is straight forward to verify that, for 
$0\le x < 30$, $x^1 y^1 5^1$ 
is a $\mathcal{P}$-position if and only if
$y=\varphi_5(x)$. We leave the details of the 
proof to the reader.
This establishes the base case.

We want to show that $(4x+2+r)^1 (4x+5-r)^1 5^1$ is a
    $\mathcal{P}$-position for $x\ge 7$ and $r\in\{0,\, 1\}$.
 We assume that for $y<4x+2$, $y^1 z^1 5^1$ is a 
$\mathcal{P}$-position if and only if $z=\varphi_5(y)$.
We observe that for all $y<4x+2$,
  $\varphi_5(y)<4x+2$.
  Suppose Player 1 applies a standard Nim move to  $(4x+2+r)^1 (4x+5-r)^1 5^1$.
  By Remark \ref{rem:BeginInduction}, we may assume that
    the resulting position is
 either $y^1 (4x+5-r)^1 5^1$ for $4x+2\le y <4x+2+r$,
 $(4x+2+r)^1 y^1 5^1$ for $4x+2\le y <4x+5-r$, 
  or $(4x+2+r)^1 (4x+5-r)^1 y^1$ for $0\le y<5$.

Assume $r=0$. Suppose Player 1 removes 1 coin from the pile of
height either $4x+5$ or $5$ to obtain  $(4x+2)^1 (4x+4)^1 5^1$ or $(4x+2)^1 (4x+5)^1 4^1$.
Player 2 removes 1 coin from the opposing pile to
obtain $(4x+2)^1 (4x+4)^1 4^1$, which is a 
$\mathcal{P}$-position by Theorem 
\ref{thm:ThreeHeapab4Pposition}.

Assume $r\in\{0,\, 1\}$.
Suppose, for $y\in\{2,\, 3\}$, Player 1 removes $y-r$ coins from
the pile of height 
$4x+5-r$ to obtain
$(4x+2+r)^1 (4x+5-y)^1 5^1$.
Since $(4x+2+r)\oplus (4x+5-y)\in\{0,\, 1\}$,
$(4x+2+r)^1 (4x+5-y)^1 5^1$ is an $\mathcal{N}$-position 
by Lemma \ref{lem:NPositions}.

 Suppose, for $2\le y\le 5$, Player 1 removes $y$ coins from
  the pile of height $5$ 
  to obtain $(4x+2+r)^1 (4x+5-r)^1 (5-y)^1$.
  Since $4x+2+r<4(x+1)\le 4x+5-r$,
  the binary representations of  $4x+2+r$ and  $4x+5-r$
  differ in the $2^2$ place value.
  Thus, $(4x+2+r)\oplus (4x+5-r)\ge 4$.
  By Lemma \ref{lem:NPositions2},  $(4x+2+r)^1 (4x+5-r)^1 (5-y)^1$
  is an $\mathcal{N}$-position.

  Suppose Player 1 applies the halving operation 
  to $(4x+2+r)^1 (4x+5-r)^1 5^1$
  to obtain $(2x+1)^1 (2x+2)^1 2^1$.
  By Lemma \ref{lem:HalveCase},
 $(2x+1)^1 (2x+2)^1 2^1$ is an $\mathcal{N}$-position.
  Hence, $(4x+2)^1 (4x+5)^1 5^1$ is a 
  $\mathcal{P}$-position.

  Assume $r=1$. 
  Suppose Player 1 removes 1 coin from the pile
  of height $4x+3$ or $5$ to obtain $(4x+2)^1 (4x+4)^1 5^1$ or $(4x+3)^1 (4x+4)^1 4^1$.
  Player 2 removes 1 coin from the opposing pile 
  to obtain $(4x+2)^1 (4x+4)^1 4^1$, 
  which is a $\mathcal{P}$-position by 
  Theorem \ref{thm:ThreeHeapab4Pposition}.
Therefore, $(4x+3)^1 (4x+4)^1 5^1$ is a 
$\mathcal{P}$-position.

By Remark \ref{rem:ShifetedBitWiseSum}, $b=\varphi_5(a)=a\oplus_2 3$
 is equivalent to $(a+2)\oplus (b+2)=3$.\qed
\end{proof}

\begin{table}[t]
\centering
\begin{tabular}{ccccccccccccccc}
\multicolumn{10}{c}{List of games $a^1 b^1 6^1$ that are
$\mathcal{P}$-positions} \\
\hline
(0,8) &(1,9)  &(2,5)  &(3,6)  &(4,7)   &(5,2)
&(6,3)  &(7,4)  &(8,0)  &(9,1)  
\\[0.025in]
(10,16) &(11,17)  &(12,18)  &(13,19)  &(14,20)  &(15,21)
&(16,10)  &(17,11)  &(18,12)  &(19,13)  
\\[0.025in]
(20,14) &(21,15)  &(22,24)  &(23,25)  &(24,22)   &(25,23)
&(26,28)  &(27,29)  &(28,26)  &(29,27)  
\\[0.025in]
(30,34) &(31,35)  &(32,36)  &(33,37)  &(34,30)   &(35,31)
&(36,32)  &(37,33)  &(38,42)  &(39,43)  
\\[0.025in]
(40,44) &(41,45)  &(42,38)  &(43,39)  &(44,40)   &(45,41)
&(46,50)  &(47,51)  &(48,52)  &(49,53)  
\\[0.025in]
(50,46) &(51,47)  &(52,48)  &(53,49)  &(54,58)   &(55,59)
&(56,60)  &(57,61)  &(58,54)  &(59,55)  
\\[0.025in]
(60,56) &(61,57)  &(62,66)  &(63,67)  &(64,68)   &(65,69)
&(66,62)  &(67,63)  &(68,64)  &(69,65)  
\\[0.025in]
(70,74) &(71,75)  &(72,76)  &(73,77)  &(74,70)   &(75,71)
&(76,72)  &(77,73)  &(78,82)  &(79,83)  
\\[0.025in]
\end{tabular} 
\caption{$\mathcal{P}$-position games $a^1 b^1 6^1$ 
listed as $(a,b)$.}
\label{tab:ab6Ppoisitions}
\end{table}

The $\mathcal{P}$-positions of the three-pile
games $a^1 \, b^1 \, 6^1$, for $0\le a \le 79$,
are listed in Table \ref{tab:ab6Ppoisitions}
as the ordered pair $(a,b)$.
We characterize the three-pile games $a^1 b^1 6^1$ that
are $\mathcal{P}$-positions in Theorem \ref{thm:ThreeHeapab6Pposition}.
In the next two theorems, we need to verify that the game
$(4x-1+y)^1 (4x+1+y)^1 3^1$ is an $\N$-position.

\begin{lemma}\label{lem:HalveCase2}
    Suppose $x\ge 4$ and $y\in\{0,\, 1\}$. Then the game $(4x-1+y)^1 (4x+1+y)^1 3^1$ is an $\N$-position.
\end{lemma}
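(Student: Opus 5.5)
The plan is to reduce everything to Theorem \ref{thm:ThreeHeapab3Pposition}. That theorem determines $\varphi_3$ completely, and $a^1 b^1 3^1$ is a $\P$-position if and only if $b=\varphi_3(a)$. So it suffices to show that $\varphi_3(4x-1+y)\ne 4x+1+y$ for every $x\ge 4$ and $y\in\{0,\,1\}$. I split according to whether the smaller pile $a=4x-1+y$ lies in the range covered by the table or by the closed-form formula.

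\emph{Small case, $4\le x\le 7$.} Here $a\le 28$, so I would read $\varphi_3(a)$ directly from Table \ref{tab:ab3Ppoisitions}. The eight games to check are $(a,b)\in\{(15,17),(16,18),(19,21),(20,22),(23,25),(24,26),(27,29),(28,30)\}$. The table gives
$\varphi_3(15)=18$, $\varphi_3(16)=21$, $\varphi_3(19)=14$, $\varphi_3(20)=17$, $\varphi_3(23)=26$, $\varphi_3(24)=29$, $\varphi_3(27)=22$ and $\varphi_3(28)=25$. None of these equals the corresponding $b$, so each game is an $\N$-position.

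\emph{Large case, $x\ge 8$.} Now $a,b\ge 31\ge 30$, so the closed form in Theorem \ref{thm:ThreeHeapab3Pposition} applies: the game is a $\P$-position only if $a\oplus b=\gamma_3(a)\in\{1,\,3\}$.
\begin{itemize}
\item If $y=0$, then $4x-1<4x\le 4x+1$, so the two piles differ in the $2^2$ place or higher. Hence $(4x-1)\oplus(4x+1)\ge 4$, and Lemma \ref{lem:NPositions2} gives an $\N$-position.
\item If $y=1$, then $(4x)\oplus(4x+2)=2$. This is even, so it is not in $\{1,\,3\}$, and the game is again an $\N$-position.
\end{itemize}
If an explicit winning move is wanted for $y=1$, let $m$ be defined by $7\cdot 2^m\le 4x<7\cdot 2^{m+1}$. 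When $m$ is odd, Player 1 removes one coin from the pile of height 3 to reach $(4x)^1(4x+2)^1 2^1$, a $\P$-position by Theorem \ref{thm:ThreeHeapab2Pposition}. When $m$ is even, Player 1 removes the whole pile of height 3 to reach $(4x)^1(4x+2)^1$, a $\P$-position by Theorem \ref{thm:TwoHeapPposition}.

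The only step needing care is the boundary between the two cases. I must make sure the closed-form theorems are applied only when both piles are at least $30$, which is why $x\le 7$ is handled by the table. The small cases themselves are just table lookups.
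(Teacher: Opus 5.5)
Your proof is correct. For $x\ge 8$ it is essentially the paper's argument: both piles are at least $30$, so Theorem~\ref{thm:ThreeHeapab3Pposition} would force $a\oplus b\in\{1,3\}$. The paper rules this out in one line, because $4x-1+y$ and $4x+1+y$ have the same parity and so their nim-sum is even; that covers your two subcases $y=0$ and $y=1$ at once.

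The real difference is the range $4\le x\le 7$. The paper exhibits a winning move for Player~1 in each of the eight games and checks the target against Tables~\ref{tab:ab1Ppoisitions} and~\ref{tab:ab3Ppoisitions}. For $15^1\,17^1\,3^1$ and $23^1\,25^1\,3^1$ it has to use the halving move, to $7^1\,8^1\,1^1$ and $11^1\,12^1\,1^1$.

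You instead use the ``if and only if'' in Theorem~\ref{thm:ThreeHeapab3Pposition}, which ultimately rests on the uniqueness in Remark~\ref{rem:ThreePilePposition}, and simply read off $\varphi_3(a)\ne b$ from Table~\ref{tab:ab3Ppoisitions}. This is shorter and needs only the $c=3$ table. It also avoids searching for a winning move in the two awkward games: they are $\mathcal{N}$-positions because they are options of the $\mathcal{P}$-positions $15^1\,18^1\,3^1$ and $23^1\,26^1\,3^1$.

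What you give up is constructiveness. The paper's version tells Player~1 how to win, matching the move-by-move style of its other proofs, while yours only certifies the outcome. Both rest on the same base-case table, so neither is more rigorous.
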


\begin{proof}
    First, assume $x\ge 8$.
 Since $4x-1+y$ and $4x+1+y$ have the same parity,
 $(4x-1+y)\oplus (4x+1+7)$ is even.
 Thus, $(4x-1+y)\oplus (4x+1+y)\not\in\{1,\, 3\}$.
 By Theorem \ref{thm:ThreeHeapab3Pposition},
 $(4x-1+y)^1 (4x+1+y)^1 3^1$ is an $\mathcal{N}$-position.

Second, assume $4\le x \le 7$.
If $x\in\{4,\,6\}$ and $y=0$, Player 1 applies the halving operation to
 $(4x-1)^1 (4x+1)^1 3^1$ to obtain  $(2x-1)^1 (2x)^1 1^1$.
 If $x\in\{5,\, 7\}$ and $y=0$, Player 1 removes 3 coins from the pile 
 of height $4x-1$ to obtain  $(4x-4)^1 (4x+1)^1 3^1$.
 If $x\in\{4,\, 6\}$ and $y=1$, Player 1 removes 1 coin from the pile 
 of height $4x$ to obtain  $(4x-1)^1 (4x+2)^1 3^1$.
  If $x\in\{5,\, 7\}$ and $y=1$, Player 1 removes 5 coins from the pile 
 of height $4x+2$ to obtain  $(4x)^1 (4x-3)^1 3^1$.
 One observes from Tables \ref{tab:ab1Ppoisitions} and
 \ref{tab:ab3Ppoisitions} that each of the Player 1 moves results 
 in a $\mathcal{P}$-position.
\end{proof}

\begin{theorem}\label{thm:ThreeHeapab6Pposition}
The first 30 values of $\varphi_6$, in the form of 
$(a,\varphi_6(a))$, are displayed in Table \ref{tab:ab6Ppoisitions}.
    Suppose $x\ge 30$. 
       Then 
       \begin{equation*}
        \varphi_6(x)= x \oplus_2 4.   
       \end{equation*}
   Hence, $a^1 b^1 6^1$ is a $\mathcal{P}$-position if and only if
   $b=\varphi_6(a)$.
   Furthermore, for $a,b\ge 30$,
   $a^1 b^1 6^1$ is a $\mathcal{P}$-position if and only if
   $(a+2)\oplus (b+2)=4$.
\end{theorem}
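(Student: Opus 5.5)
Write each $x\ge 30$ in the shifted coordinates $x+2=8k+t$ with $k\ge 4$ and $0\le t\le 7$. The claim $\varphi_6(x)=x\oplus_2 4$ then says that $x^1 y^1 6^1$ is a $\P$-position exactly when $y+2=8k+(t\oplus 4)$. So it suffices to prove, for every $k\ge 4$ and $s\in\{0,1,2,3\}$, that
\begin{equation*}
G_{k,s}=(8k-2+s)^1\,(8k+2+s)^1\,6^1
\end{equation*}
is a $\P$-position. The base case $0\le x<30$ is read off Table \ref{tab:ab6Ppoisitions}, as in the earlier theorems, and is left to the reader; note that $k=4$ is exactly the block starting at $30$. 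The final reformulation $(a+2)\oplus(b+2)=4$ then follows from Remark \ref{rem:ShifetedBitWiseSum}.

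\textbf{Inductive step.} Fix $k$, and assume the result for all first piles $y<8k-2$. Take $a_0=8k-2$ in Remark \ref{rem:BeginInduction}. The pairs already established are closed under $\varphi_6$ (the blocks $[8j-2,8j+6)$ are unions of pairs), so $\varphi_6(y)<8k-2$ for all $y<8k-2$. Hence we only need to treat Player 1 moves that leave both large piles in $[8k-2,\,8k+6)$, moves on the pile of height $6$, and the halving move. Write $a=8k-2+s$ and $b=8k+2+s$, and proceed by induction on $s$ inside the block.

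\textbf{Nim moves on the large piles.}
\begin{itemize}
\item[$\bullet$] If Player 1 lowers $a$ to $8k-2+u$ with $u<s$, Player 2 lowers $b$ to $8k+2+u$. The position $G_{k,u}$ is a $\P$-position by the inner induction.
\item[$\bullet$] If Player 1 lowers $b$ to $8k+2+u$ with $u<s$, Player 2 answers symmetrically by lowering $a$ to $8k-2+u$.
\item[$\bullet$] If Player 1 lowers $b$ to $y\in[8k-2,8k+2)$, then $a+2$ and $y+2$ both lie in $[8k,8k+4)$. I expect to show that $a\oplus y\in\{0,1,2,3\}$ by a direct check of the low bits. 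Lemma \ref{lem:NPositions} then applies, since the third pile is $6\ge 4$, and the resulting position is an $\N$-position.
\end{itemize}

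\textbf{Moves on the pile of height $6$.} Suppose Player 1 reduces it to $c'<6$.
\begin{itemize}
\item[$\bullet$] If $c'\le 3$, note that $a\oplus b\ge 4$. For $s\ge 2$ one gets $a\oplus b=4$ directly. For $s\le 1$, $a$ lies below the multiple $8k$ and $b$ lies above it, so $a\oplus b\ge 8$. Lemma \ref{lem:NPositions2} then gives an $\N$-position.
\item[$\bullet$] If $c'\in\{4,5\}$, then $(a+2)\oplus(b+2)=4\notin\{2,3\}$. By Theorems \ref{thm:ThreeHeapab4Pposition} and \ref{thm:ThreeHeapab5Pposition} the position is an $\N$-position.
\end{itemize}

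\textbf{Halving move.} Halving $G_{k,s}$ gives
\begin{equation*}
(4k-1+\lfloor s/2\rfloor)^1\,(4k+1+\lfloor s/2\rfloor)^1\,3^1 .
\end{equation*}
This is precisely the game of Lemma \ref{lem:HalveCase2} with $x=k\ge 4$ and $y=\lfloor s/2\rfloor$, so it is an $\N$-position. This case is the one I expect to be the real obstacle, which is why Lemma \ref{lem:HalveCase2} was prepared. It requires $k\ge 4$, which matches the threshold $30$; the small cases $4\le k\le 7$ are covered by that lemma's table-based moves.

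Since every Player 1 move leads to an $\N$-position, $G_{k,s}$ is a $\P$-position. By Remark \ref{rem:ThreePilePposition}, $\varphi_6(a)$ is unique, so $b=\varphi_6(a)$ and, by the involution property, $a=\varphi_6(b)$.
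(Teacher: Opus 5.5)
Your overall structure matches the paper's: the blocks $[8k-2,8k+6)$, mirror replies inside a block, Lemma~\ref{lem:NPositions2} for $c'\le 3$, and Lemma~\ref{lem:HalveCase2} for the halving move. However, the third bullet under ``Nim moves on the large piles'' is false as stated.

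From $a+2,\,y+2\in[8k,8k+4)$ you only get $(a+2)\oplus(y+2)\in\{0,1,2,3\}$. That does not give $a\oplus y\in\{0,1,2,3\}$, because subtracting $2$ can cross the multiple $8k$. Whenever one of $a,y$ lies in $\{8k-2,8k-1\}$ and the other in $\{8k,8k+1\}$, we have $\lfloor a/8\rfloor\ne\lfloor y/8\rfloor$, hence $a\oplus y\ge 8$. For every $s$ there are two such Player~1 moves. For $s=0$ they are removing $1$ or $2$ coins from $8k+2$; for $s=1$, removing $2$ or $3$ from $8k+3$; for $s=2$, removing $5$ or $6$ from $8k+4$; for $s=3$, removing $6$ or $7$ from $8k+5$. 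Concretely, $30^1 34^1 6^1\to 30^1 33^1 6^1$ gives $30\oplus 33=63$. So Lemma~\ref{lem:NPositions} does not apply, and none of the other available lemmas covers a third pile of height $6$ here.

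What is missing is an explicit reply on the third pile. The paper does exactly this: for $r=0$, ``Player~2 removes $y$ coins from the opposing pile''; for $r=1$, Player~2 ``removes $4-y$ coins from the pile of height $6$''. In precisely the straddling cases, $(a+2)\oplus(y+2)\in\{2,3\}$. Player~2 then lowers the pile of height $6$ to $c'=2+\big((a+2)\oplus(y+2)\big)\in\{4,5\}$. This reaches a $\P$-position by Theorem~\ref{thm:ThreeHeapab4Pposition} or~\ref{thm:ThreeHeapab5Pposition}; for example $30^1 33^1 5^1$ is a $\P$-position since $32\oplus 35=3$.

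In the remaining cases $(a+2)\oplus(y+2)\in\{0,1\}$. Then $a+2$ and $y+2$ lie in one pair $\{2m,2m+1\}$, so $a,y\in\{2m-2,2m-1\}$ and $a\oplus y\in\{0,1\}$. Here Lemma~\ref{lem:NPositions} does apply.

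With this repair the rest of your argument is sound. Handling $c'\in\{4,5\}$ through the ``if and only if'' in Theorems~\ref{thm:ThreeHeapab4Pposition} and~\ref{thm:ThreeHeapab5Pposition} (using $(a+2)\oplus(b+2)=4$) is a valid shortcut for the paper's explicit replies. Your uniform treatment of $s\in\{0,1,2,3\}$ also covers the cases $r\in\{2,3\}$ that the paper leaves to the reader.
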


\begin{proof}
It is straight forward to verify that, for 
$0\le x < 30$, $x^1 y^1 6^1$ 
is a $\mathcal{P}$-position if and only if
$y=\varphi_6(x)$. We leave the details of the 
proof to the reader.
This establishes the base case.

We want to show that $(8x-2+r)^1 (8x+2+r)^1 6^1$ is a
    $\mathcal{P}$-position for $x\ge 4$ and $r\in\{0,\, 1\}$.
We assume that for $y<8x-2$, $y^1 z^1 6^1$ is a 
$\mathcal{P}$-position if and only if $z=\varphi_6(y)$.
We observe that for all $y<8x-2$,
  $\varphi_6(y)<8x-2$.
Suppose Player 1 applies a standard Nim move to $(8x-2+r)^1 (8x+2+r)^1 6^1$.
By Remark \ref{rem:BeginInduction}, we may assume that 
the resulting position is
 either $y^1 (8x+2+r)^1 6^1$ for $8x-2\le y <8x-2+r$,
 $(8x-2+r)^1 y^1 6^1$ for $8x-2\le y <8x+2+r$, 
  or $(8x-2+r)^1 (8x+2+r)^1 y^1$ for $0\le y<6$.

Assume $r=0$.
Suppose, for $y\in\{1,\, 2\}$, Player 1 removes $y$ coins from the pile
of height either $8x+2$ or $6$ to obtain $(8x-2)^1 (8x+2-y)^1 6^1$ or
$(8x-2)^1 (8x+2)^1 (6-y)^1$. Player 2 removes $y$ coins from the opposing
pile to obtain $(8x-2)^1 (8x+2-y)^1 (6-y)^1$, which is a 
$\mathcal{P}$-position by Theorems \ref{thm:ThreeHeapab4Pposition}
and \ref{thm:ThreeHeapab5Pposition}.

Assume $r\in\{0,\, 1\}$.
Suppose, for $y\in\{3,\, 4\}$, Player 1 removes $y+r$ coins from the pile
of height $8x+2+r$ to obtain $(8x-2+r)^1 (8x+2-y)^1 6^1$. 
Since $(8x-2+r)\oplus (8x+3-y)\in\{0,\, 1\}$, $(8x-2)^1 (8x+2-y)^1 6^1$
is an $\mathcal{N}$-position by Lemma \ref{lem:NPositions}.
Suppose, for $3\le y\le 6$,  Player 1 removes $y$ coins from the pile of
height $6$ to obtain $(8x-2+r)^1 (8x+2+r)^1 (6-y)^1$.
Since $8x-2+r<8x<8x+2+r$,
the binary representations of $8x-2+r$ and $8x+2+r$
differ in the $2^3$ place value.
Thus, $(8x-2+r)\oplus (8x+2+r)\ge 8$.
By Lemma \ref{lem:NPositions2}, $(8x-2+r)^1 (8x+2+r)^1 (6-y)^1$ is an $\N$-position.

 Suppose Player 1 applies the halving operation 
  to $(8x-2+r)^1 (8x+2+r)^1 6^1$
  to obtain $(4x-1+r_1)^1 (4x+1+r_1)^1 3^1$ where $r_1\in\{0,\, 1\}$.
By Lemma \ref{lem:HalveCase2}, $(4x-1+r_1)^1 (4x+1+r_1)^1 3^1$
is an $\N$-position.
Hence, $(8x-2)^1 (8x+2)^1 6^1$ is a $\mathcal{P}$-position.

Assume $r=1$.
We show $(8x-1)^1 (8x+3)^1 6^1$ is a $\mathcal{P}$-position.
Suppose Player 1 removes 1 coin from the pile of height either 
$8x-1$ or $8x+3$ to obtain $(8x-2)^1 (8x+3)^1 6^1$ 
or $(8x-1)^1 (8x+2)^1 6^1$.
Player 2 removes $1$ coin from the opposing
pile to obtain $(8x-2)^1 (8x+2)^1 6^1$, which is a 
$\mathcal{P}$-position.
Suppose, for $y\in\{2,\, 3\}$, Player 1 removes $y$ coins from the pile
of height $8x+3$ to obtain $(8x-1)^1 (8x+3-y)^1 6^1$. 
Player 2 removes $4-y$ coins from the pile of height $6$ to obtain
$(8x-1)^1 (8x+3-y)^1 (2+y)^1$, which is a $\mathcal{P}$-position by
Theorems \ref{thm:ThreeHeapab4Pposition}
and \ref{thm:ThreeHeapab5Pposition}.
Suppose, for $y\in\{1,\, 2\}$, Player 1 removes $y$ coins from the pile
of height $6$ to obtain $(8x-1)^1 (8x+3)^1 (6-y)^1$. 
Player 2 removes $4-y$ coins from the pile of height $8x+3$ to obtain
$(8x-1)^1 (8x+y-1)^1 (6-y)^1$, which is a $\mathcal{P}$-position by
Theorems \ref{thm:ThreeHeapab4Pposition}
and \ref{thm:ThreeHeapab5Pposition}.
Hence, $(8x-1)^1 (8x+3)^1 6^1$ is a $\mathcal{P}$-position.

The proof that $(8x-2+r)^1 (8x+2+r)^1 6^1$ is a
    $\mathcal{P}$-position, for $x\ge 4$ and $r\in\{2,\, 3\}$,
    is similar to the cases when $x\ge 4$ and $r\in\{0,\, 1\}$.
    We leave the details to the reader.
    
By Remark \ref{rem:ShifetedBitWiseSum}, $b=\varphi_6(a)=a\oplus_2 4$
 is equivalent to $(a+2)\oplus (b+2)=4$.\qed
\end{proof}

\begin{table}[t!]
\centering
\begin{tabular}{ccccccccccccccc}
\multicolumn{10}{c}{List of games $a^1 b^1 7^1$ that are
$\mathcal{P}$-positions} \\
\hline
(0,9) &(1,8)  &(2,7)  &(3,5)  &(4,6)   &(5,3)
&(6,4)  &(7,2)  &(8,1)  &(9,0)  
\\[0.025in]
(10,17) &(11,16)  &(12,19)  &(13,18)  &(14,21)  &(15,20)
&(16,11)  &(17,10)  &(18,13)  &(19,12)  
\\[0.025in]
(20,15) &(21,14)  &(22,25)  &(23,24)  &(24,23)   &(25,22)
&(26,29)  &(27,28)  &(28,27)  &(29,26)  
\\[0.025in]
(30,35) &(31,34)  &(32,37)  &(33,36)  &(34,31)   &(35,30)
&(36,33)  &(37,32)  &(38,43)  &(39,42)  
\\[0.025in]
(40,45) &(41,44)  &(42,39)  &(43,38)  &(44,41)   &(45,40)
&(46,51)  &(47,50)  &(48,53)  &(49,52)  
\\[0.025in]
(50,47) &(51,46)  &(52,49)  &(53,48)  &(54,59)   &(55,58)
&(56,61)  &(57,60)  &(58,55)  &(59,54) 
\\[0.025in]
(60,57) &(61,56)  &(62,67)  &(63,66)  &(64,69)   &(65,68)
&(66,63)  &(67,62)  &(68,65)  &(69,64)  
\\[0.025in]
(70,75) &(71,74)  &(72,77)  &(73,76)  &(74,71)   &(75,70)
&(76,73)  &(77,72)  &(78,83)  &(79,82)  
\\[0.025in]
\end{tabular} 
\caption{$\mathcal{P}$-position games $a^1 b^1 7^1$ 
listed as $(a,b)$.}
\label{tab:ab7Ppoisitions}
\end{table}

\begin{theorem}\label{thm:ThreeHeapab7Pposition}
The first 30 values of $\varphi_7$, in the form of 
$(a,\varphi_7(a))$, are displayed in Table \ref{tab:ab7Ppoisitions}.
Suppose $x\ge 30$. Then 
       \begin{equation*}
        \varphi_7(x)= x \oplus_2 5.   
       \end{equation*}
   Hence, $a^1 b^1 7^1$ is a $\mathcal{P}$-position if and only if
   $b=\varphi_7(a)$.
   Furthermore, for $a,b\ge 30$,
   $a^1 b^1 7^1$ is a $\mathcal{P}$-position if and only if
   $(a+2)\oplus (b+2)=5$.
\end{theorem}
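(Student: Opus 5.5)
The plan is to follow the template of the proof of Theorem \ref{thm:ThreeHeapab6Pposition}, now in the shifted coordinates $a+2$. The values $x<30$ form the base case and are read off Table \ref{tab:ab7Ppoisitions}. For $x\ge 30$, write the piles in blocks of eight, $8x-2\le a<8x+6$ with $x\ge 4$. Since $(a+2)\oplus(b+2)=5$ only permutes the three low bits of $a+2$, the claimed $\P$-positions in a block are the four pairs
\begin{equation*}
(8x-2,8x+3),\quad (8x-1,8x+2),\quad (8x,8x+5),\quad (8x+1,8x+4),
\end{equation*}
each with a third pile of height $7$. We induct on the block, and within a block on $a$. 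We assume that for every $y<a$ the game $y^1 z^1 7^1$ is a $\P$-position exactly when $z=\varphi_7(y)$. By Remark \ref{rem:BeginInduction} with $a_0=8x-2$, any Nim move that takes a big pile below $8x-2$ is answered by the inductive hypothesis. So we only need to treat moves staying inside the block, moves on the pile of height $7$, and the halving move.

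\textbf{Moves on the pile of height $7$.} If Player 1 reduces it to $k+2$ with $k\in\{2,3,4\}$, Player 2 moves the appropriate big pile so that the shifted sum $(a+2)\oplus(b+2)$ becomes $k$. By Theorems \ref{thm:ThreeHeapab4Pposition}, \ref{thm:ThreeHeapab5Pposition} and \ref{thm:ThreeHeapab6Pposition} the result is a $\P$-position. This is possible because $5\oplus k\in\{7,6,1\}$ has only low bits, so only the pile whose low bits are large enough needs to shrink. I will check the four pairs case by case: for some pairs the new partner lies below the current pile, and sometimes the opposing pile must be the one reduced, as in the $r=1$ case of Theorem \ref{thm:ThreeHeapab6Pposition}. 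If Player 1 reduces the pile to $y\le 3$, the two big piles straddle the multiple of $8$ at $8x$ (they lie on opposite sides of $8x-1$ versus $8x$, or differ in the $2^2$ place when both exceed $8x$). Hence $a\oplus b\ge 4$, and Lemma \ref{lem:NPositions2} shows the result is an $\N$-position.

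\textbf{Moves inside the block on a big pile.} Suppose a big pile is reduced to $z$ with $z$ still in the block, and let $k'=(a+2)\oplus(z+2)$, where $a$ is the other big pile. If $k'\in\{0,1\}$, then $a+2$ and $z+2$ lie in the same aligned pair $\{2m,2m+1\}$. So $a$ and $z$ differ at most in bit $0$, giving $a\oplus z\in\{0,1\}$, and Lemma \ref{lem:NPositions} gives an $\N$-position. If $k'\in\{2,3,4\}$, Player 2 reduces the $7$-pile to $k'+2$ and invokes Theorems \ref{thm:ThreeHeapab4Pposition}--\ref{thm:ThreeHeapab6Pposition}. If $k'\in\{6,7\}$ (for example $(8x+1)^1(8x+2)^1 7^1$), no smaller third pile is available. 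In that case Player 2 instead replaces the other big pile by $\varphi_7(z)$. This value is smaller within the block (here $8x-1$), and the resulting position is a $\P$-position by the inductive hypothesis on smaller $a$.

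\textbf{The halving move.} Halving the four pairs yields $(4x-1)^1(4x+1)^1 3^1$ or $(4x)^1(4x+2)^1 3^1$. Lemma \ref{lem:HalveCase2} (with $x\ge 4$) shows both are $\N$-positions. The final equivalence with $(a+2)\oplus(b+2)=5$ then follows from Remark \ref{rem:ShifetedBitWiseSum}.

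The main obstacle is the bookkeeping in the second case. I must verify, for each of the four pairs and each in-block reduction, that the response is legal: either the $7$-pile can be lowered to the right height, or the required partner $\varphi_7(z)$ really lies below the opposing pile. I will tabulate these moves explicitly, as the paper does for $r=1$ in Theorem \ref{thm:ThreeHeapab6Pposition}.
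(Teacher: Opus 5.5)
Your proposal is correct and follows the paper's proof. It uses the same induction through Remark~\ref{rem:BeginInduction} with $a_0=8x-2$, replies via Theorems~\ref{thm:ThreeHeapab4Pposition}--\ref{thm:ThreeHeapab6Pposition}, Lemmas~\ref{lem:NPositions} and~\ref{lem:NPositions2} for the $\N$-position cases, and Lemma~\ref{lem:HalveCase2} for halving. The paper writes out only the pair $(8x-2,8x+3)$, where your case $k'\in\{6,7\}$ never occurs, and leaves the other three pairs to the reader; your treatment of that case for $(8x,8x+5)$ and $(8x+1,8x+4)$, answering with the earlier pair $(8x-2,8x+3)$ or $(8x-1,8x+2)$, is what those omitted pairs require, and it checks out.
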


\begin{proof}
It is straight forward to verify that, for 
$0\le x < 30$, $x^1 y^1 7^1$ 
is a $\mathcal{P}$-position if and only if
$y=\varphi_7(x)$. We leave the details of the 
proof to the reader.
This establishes the base case.

We want to show that $(8x-2)^1 (8x+3)^1 7^1$ is a $\mathcal{P}$-position for $x\ge 4$.
We assume that for $y<8x-2$, $y^1 z^1 7^1$ is a 
$\mathcal{P}$-position if and only if $z=\varphi_7(y)$.
We observe that for all $y<8x-2$,
  $\varphi_7(y)<8x-2$.
Suppose Player 1 applies a standard Nim move to $(8x-2)^1 (8x+3)^1 7^1$.
By Remark \ref{rem:BeginInduction}, we may assume that 
the resulting position is
 either $(8x-2)^1 y^1 7^1$ for $8x-2\le y <8x+3$ 
  or $(8x-2)^1 (8x+3)^1 y^1$ for $0\le y<7$.
  
Suppose, for $1\le y\le 3$, Player 1 removes $y$ coins from the pile
of height either $8x+3$ or $7$ to obtain 
$(8x-2)^1 (8x+3-y)^1 7^1$ or $(8x-2)^1 (8x+3)^1 (7-y)^1$. 
Player 2 removes $y$ coins from the opposing pile to obtain
$(8x-2)^1 (8x+3-y)^1 (7-y)^1$, which is a $\mathcal{P}$-position by
Theorems \ref{thm:ThreeHeapab4Pposition}, 
\ref{thm:ThreeHeapab5Pposition}, and \ref{thm:ThreeHeapab6Pposition}.
Suppose, for $y\in\{4,\, 5\}$, Player 1 removes $y$ coins from the pile
of height $8x+3$ to obtain $(8x-2)^1 (8x+3-y)^1 7^1$. 
Since $(8x-2)\oplus (8x+3-y)\in\{0,\, 1\}$, $(8x-2)^1 (8x+3)^1 7^1$
is an $\mathcal{N}$-position by Lemma \ref{lem:NPositions}.
  Suppose, for $4\le y\le 7$,  Player 1 removes $y$ coins from the pile of
height $7$ to obtain $(8x-2)^1 (8x+3)^1 (7-y)^1$.
Since $8x-2<8x<8x+3$,
the binary representations of $8x-2$ and $8x+3$
differ in the $2^3$ place value.
Thus, $(8x-2)\oplus (8x+3)\ge 8$.
By Lemma \ref{lem:NPositions2}, $(8x-2)^1 (8x+3)^1 (7-y)^1$ is an $\N$-position.
  
Suppose Player 1 applies the halving operation to  
$(8x-2)^1 (8x+3)^1 7^1$ to obtain $(4x-1)^1 (4x+1)^1 3^1$.
By Lemma \ref{lem:HalveCase2}, $(4x-1)^1 (4x+1)^1 3^1$
is an $\N$-position.
 Therefore, $(8x-2)^1 (8x+3)^1 7^1$ is a $\mathcal{P}$-position.

The proof that $(8x-1)^1 (8x+2)^1 7^1$, for $x\ge 4$, 
and $(8x+r)^1 (8x+5-r)^1 7^1$, for $x\ge 4$ and $r\in\{0,\, 1\}$, are
    $\mathcal{P}$-positions
    is similar to the proof that
    $(8x-2)^1 (8x+3)^1 7^1$, for $x\ge 4$, is a $\mathcal{P}$-position.
    We leave the details to the reader.
    
By Remark \ref{rem:ShifetedBitWiseSum}, $b=\varphi_7(a)=a\oplus_2 5$
 is equivalent to $(a+2)\oplus (b+2)=5$.\qed
\end{proof}

\begin{corollary}\label{cor:BitShift2}
    We have $\varphi_c(x)= x \oplus_2  (c-2)$
    for $c\in\{4,\, 5,\, 6,\, 7\}$ and $x\ge 30$.
\end{corollary}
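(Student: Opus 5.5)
This corollary simply collects the four preceding theorems, so the plan is to read off each case. For $c=4$, Theorem \ref{thm:ThreeHeapab4Pposition} states directly that $\varphi_4(x)=x\oplus_2 2$ for $x\ge 30$. Likewise Theorem \ref{thm:ThreeHeapab5Pposition} gives $\varphi_5(x)=x\oplus_2 3$. Theorem \ref{thm:ThreeHeapab6Pposition} gives $\varphi_6(x)=x\oplus_2 4$, and Theorem \ref{thm:ThreeHeapab7Pposition} gives $\varphi_7(x)=x\oplus_2 5$. In each case the second argument of $\oplus_2$ is exactly $c-2$, which is the claimed formula.

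For completeness I would also note the equivalent symmetric form. By Remark \ref{rem:ShifetedBitWiseSum}, $b=x\oplus_2 (c-2)$ is equivalent to $(x+2)\oplus(b+2)=c-2$. This is precisely \eqref{eqn:SecondBitsum} in Theorem \ref{thm:MainResult}(2).

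I would also check that $\varphi_c(x)$ is well defined as an integer at least $30$ when $x\ge 30$, since the formula is meant to be used for $a,b\ge 30$. The check: $(x+2)\oplus(c-2)$ changes only the lowest three bits of $x+2$. For $x\ge 30$ we have $x+2\ge 32$, so the result is at least $32$, and subtracting $2$ gives a value at least $30$. Hence the relation is an involution on $\{x\ge 30\}$, consistent with the involution remark earlier.

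There is no genuine obstacle, since all inductive work was done in the four theorems. The only point needing care is that Theorems \ref{thm:ThreeHeapab6Pposition} and \ref{thm:ThreeHeapab7Pposition} left some residue cases ($r\in\{2,3\}$, and the other pairings modulo $8$) to the reader. The corollary inherits those as established by the same arguments.
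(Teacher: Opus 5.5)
Your proposal is correct and matches the paper, which gives no separate proof of this corollary and treats it as a direct collection of the formulas $\varphi_c(x)=x\oplus_2(c-2)$ from Theorems~\ref{thm:ThreeHeapab4Pposition}, \ref{thm:ThreeHeapab5Pposition}, \ref{thm:ThreeHeapab6Pposition}, and \ref{thm:ThreeHeapab7Pposition}. Your extra check that $x\oplus_2(c-2)\ge 30$ for $x\ge 30$ is a correct addition: since $c-2<8$, the shifted sum only alters the lowest three bits of $x+2\ge 32$.
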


\begin{corollary}\label{cor:BitShiftSum}
    Let $a,b\ge 30$ and $4\le c\le 7$. Then the following statements are equivalent.
    \begin{enumerate}
        \item The three pile game $a^1 b^1 c^1$ is a $\mathcal{P}$-position.
        \item $a = b \oplus_2 (c-2)$.
        \item $(a+2)\oplus (b+2)=c-2$.
    \end{enumerate}
\end{corollary}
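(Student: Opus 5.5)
The plan is to derive Corollary~\ref{cor:BitShiftSum} directly from Corollary~\ref{cor:BitShift2} and Remark~\ref{rem:ShifetedBitWiseSum}, together with the uniqueness statement in Remark~\ref{rem:ThreePilePposition} and Definition~\ref{defn:ThreeHeap}. Fix $a,b\ge 30$ and $4\le c\le 7$. I would prove the cycle (1)$\Rightarrow$(2)$\Rightarrow$(3)$\Rightarrow$(1), or more simply (1)$\Leftrightarrow$(2) and (2)$\Leftrightarrow$(3).

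For (1)$\Leftrightarrow$(2): by Theorems~\ref{thm:ThreeHeapab4Pposition}--\ref{thm:ThreeHeapab7Pposition}, $a^1 b^1 c^1$ is a $\P$-position if and only if $b=\varphi_c(a)$. Since $\varphi_c$ is an involution, this is equivalent to $a=\varphi_c(b)$. Because $b\ge 30$, Corollary~\ref{cor:BitShift2} gives $\varphi_c(b)=b\oplus_2(c-2)$. Hence (1) holds if and only if $a=b\oplus_2(c-2)$.

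For (2)$\Leftrightarrow$(3): apply Remark~\ref{rem:ShifetedBitWiseSum} with $x=a$, $y=b$, $n=2$, $k=c-2$. Concretely, $a=((b+2)\oplus(c-2))-2$ holds exactly when $a+2=(b+2)\oplus(c-2)$. XOR-ing both sides with $b+2$ shows this is exactly $(a+2)\oplus(b+2)=c-2$.

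The only subtle point is the involution property, which is needed to swap the roles of $a$ and $b$, since $\oplus_2$ is not commutative. I would justify it from the symmetry of the position $a^1 b^1 c^1$ in $a$ and $b$ together with uniqueness. Alternatively, it can be bypassed entirely: use $b=a\oplus_2(c-2)$, which gives $(a+2)\oplus(b+2)=c-2$, and this expression is symmetric in $a$ and $b$, so it also yields $a=b\oplus_2(c-2)$.
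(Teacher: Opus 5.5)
Your proposal is correct and follows essentially the paper's route. The paper gives no separate proof of this corollary. It simply packages Theorems \ref{thm:ThreeHeapab4Pposition}--\ref{thm:ThreeHeapab7Pposition} (equivalently Corollary \ref{cor:BitShift2}) with Remark \ref{rem:ShifetedBitWiseSum}, exactly as each of those theorems' proofs concludes. Your explicit use of the $a\leftrightarrow b$ symmetry (or of the symmetric form $(a+2)\oplus(b+2)=c-2$) to pass from $b=a\oplus_2(c-2)$ to the stated form $a=b\oplus_2(c-2)$ fills in a step the paper leaves implicit.
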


We are in a position to prove the main result of this paper.

\begin{proof}[Theorem \ref{thm:MainResult}]
    Suppose $0\le c\le 3$. From \eqref{eqn:0PileResult}, 
    \eqref{eqn:1PileResult}, \eqref{eqn:2PileResult}, and
    \eqref{eqn:3PileResult}, $a^1 b^1 c^1$
    is a $\P$-position if and only if
    \begin{equation*}
        a\oplus b= (1+\epsilon_1)+(-1)^{\lfloor \log_2(a/7)\rfloor +\epsilon_2},
    \end{equation*}
    where $\epsilon_1=0$ if $c\in\{0,\, 2\}$,  $\epsilon_1=1$ if $c\in\{1,\, 3\}$,
     $\epsilon_2=0$ if $c\in\{0,\, 1\}$, and  $\epsilon_2=1$ if $c\in\{2,\,  3\}$.
     We observe that $\epsilon_1=\tfrac{1}{2} ( 1+(-1)^{c+1} )$ and 
      $\epsilon_2=\tfrac{1}{2} ( 1+(-1)^{\lfloor c/2\rfloor +1} )$.
      Thus, \eqref{eqn:FirstBitsum} holds.
      Also, \eqref{eqn:SecondBitsum} is an immediate consequence of Corollary \ref{cor:BitShiftSum}.
\end{proof}

We use Corollary \ref{cor:BitShiftSum} to determine 
various three-pile games of Halve Nim that are $\N$-positions.
We list these three-pile games  in the following two lemmas.

\begin{lemma}\label{lem:MoreNPositions}
Let $a$, $b$, and $c$ be integers such that $a,b\ge 30$ and $c\ge 8$.
If $(a+2)\oplus (b+2)\in\{2,\, 3,\, 4,\, 5\}$, then the three-pile game
$a^1 b^1 c^1$ is an $\mathcal{N}$-position.
\end{lemma}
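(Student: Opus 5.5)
The proof mirrors that of Lemma \ref{lem:NPositions}, with Corollary \ref{cor:BitShiftSum} in place of Theorems \ref{thm:TwoHeapPposition}--\ref{thm:ThreeHeapab3Pposition}. Given $a,b\ge 30$ and $c\ge 8$, set $\ell=(a+2)\oplus(b+2)$ and assume $\ell\in\{2,\,3,\,4,\,5\}$. Put $k=\ell+2$, so that $k\in\{4,\,5,\,6,\,7\}$ and $k-2=\ell$. We exhibit a winning move for Player 1, which by Remark \ref{rem:FindPposition} shows that $a^1 b^1 c^1$ is an $\mathcal{N}$-position.

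The move is the standard Nim move removing $c-k$ coins from the pile of height $c$. It is legal because $c\ge 8>7\ge k$, so $c-k\ge 1$ and the pile keeps $k\ge 0$ coins. The resulting position is $a^1 b^1 k^1$ with $a,b\ge 30$ and $4\le k\le 7$. Since $(a+2)\oplus(b+2)=\ell=k-2$, the implication (3)$\Rightarrow$(1) of Corollary \ref{cor:BitShiftSum} shows that $a^1 b^1 k^1$ is a $\mathcal{P}$-position. Equivalently, by Corollary \ref{cor:BitShift2} and Remark \ref{rem:ShifetedBitWiseSum}, $b=\varphi_k(a)=a\oplus_2(k-2)$, and we appeal to the corresponding one of Theorems \ref{thm:ThreeHeapab4Pposition}, \ref{thm:ThreeHeapab5Pposition}, \ref{thm:ThreeHeapab6Pposition}, \ref{thm:ThreeHeapab7Pposition}.

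There is essentially no obstacle. The only points to check are that the hypotheses $a,b\ge 30$ needed by Corollary \ref{cor:BitShiftSum} are unaffected by the move, which alters only the third pile, and that the indexing shift $k=\ell+2$ matches the range $4\le k\le 7$ covered by that corollary. Both are immediate.
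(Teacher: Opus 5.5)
Your proposal is correct and matches the paper's proof: both set $k$ so that $(a+2)\oplus(b+2)=k-2$ with $k\in\{4,5,6,7\}$, remove $c-k$ coins from the pile of height $c$, and invoke Corollary~\ref{cor:BitShiftSum} to conclude that $a^1 b^1 k^1$ is a $\mathcal{P}$-position. Your additional check that the move is legal because $c\ge 8>k$ is a harmless detail the paper leaves implicit.
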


\begin{proof}
    
    Suppose $(a+2)\oplus (b+2)=k-2$ for $k\in\{4,\, 5,\, 6,\, 7\}$.
    Player 1 removes $c-k$ coins from
    the pile of height $c$ to obtain $a^1 b^1 k^1$, which is a
    $\mathcal{P}$-position by 
    Corollary \ref{cor:BitShiftSum}.\qed
\end{proof}

\begin{lemma}\label{lem:MoreNPositions2}
Let $a$, $b$, and $c$ be integers such that $a,b\ge 30$ and 
$4\le c\le 7$.
If $(a+2)\oplus (b+2)\not\in\{2,\, 3,\, 4,\, 5\}$, 
then the three-pile game
$a^1 b^1 c^1$ is an $\mathcal{N}$-position.
\end{lemma}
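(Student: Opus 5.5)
The plan is to argue exactly as in the proof of Lemma~\ref{lem:NPositions2}, using Corollary~\ref{cor:BitShiftSum} in place of Theorems \ref{thm:TwoHeapPposition}--\ref{thm:ThreeHeapab3Pposition}. No explicit winning move for Player~1 needs to be constructed. Halve Nim is a finite impartial game, so every position is either a $\P$-position or an $\N$-position. It therefore suffices to show that $a^1 b^1 c^1$ is not a $\P$-position.

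First, fix $c$ with $4\le c\le 7$ and $a,b\ge 30$. By Corollary~\ref{cor:BitShiftSum} (equivalently, by Theorems \ref{thm:ThreeHeapab4Pposition}, \ref{thm:ThreeHeapab5Pposition}, \ref{thm:ThreeHeapab6Pposition}, and \ref{thm:ThreeHeapab7Pposition}), the game $a^1 b^1 c^1$ is a $\P$-position if and only if
\begin{equation*}
(a+2)\oplus(b+2)=c-2 .
\end{equation*}
Second, note that $c-2\in\{2,\,3,\,4,\,5\}$ because $4\le c\le 7$. Hence if $a^1 b^1 c^1$ were a $\P$-position, we would have $(a+2)\oplus(b+2)\in\{2,\,3,\,4,\,5\}$. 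This contradicts the hypothesis, so $a^1 b^1 c^1$ is not a $\P$-position, and therefore it is an $\N$-position.

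There is no real obstacle here. The only point to be careful about is that Corollary~\ref{cor:BitShiftSum} genuinely applies: it requires both $a,b\ge 30$, which is exactly the threshold in the hypothesis. If one wanted an explicit Player~1 move instead, one could try reducing a large pile to $\varphi_c$ of the other. That would require checking the size ordering of $(a+2)\oplus(b+2)$ against $c-2$, so I would avoid it and use the characterization argument above.
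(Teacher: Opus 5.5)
Your proof is correct and matches the paper's. Both arguments use Corollary~\ref{cor:BitShiftSum} to see that if $a^1 b^1 c^1$ (with $a,b\ge 30$, $4\le c\le 7$) were a $\mathcal{P}$-position, then $(a+2)\oplus(b+2)=c-2\in\{2,3,4,5\}$, and conclude by contraposition that the game is an $\mathcal{N}$-position.
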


\begin{proof}
    By Corollary \ref{cor:BitShiftSum}, if $a^1 b^1 c^1$
    is a $\P$-position, then $(a+2)\oplus (b+2)\in\{2,\, 3,\, 4,\, 5\}$.
    Thus, \, if $(a+2)\oplus (b+2)\not\in\{2,\, 3,\, 4,\, 5\}$,
   then $a^1 b^1 c^1$ is an $\mathcal{N}$-position.\qed
\end{proof}

\begin{table}[t!]
\centering
\begin{tabular}{ccccccccccccccc}
\multicolumn{10}{c}{List of games $a^1 b^1 8^1$ that are
$\mathcal{P}$-positions} \\
\hline
(0,6) &(1,7)  &(2,8)  &(3,9)  &(4,10)   &(5,11)
&(6,0)  &(7,1)  &(8,2)  &(9,3)  
\\[0.025in]
(10,4) &(11,5)  &(12,16)  &(13,17)  &(14,22)  &(15,23)
&(16,12)  &(17,13)  &(18,20)  &(19,21)  
\\[0.025in]
(20,18) &(21,19)  &(22,14)  &(23,15)  &(24,30)   &(25,31)
&(26,32)  &(27,33)  &(28,34)  &(29,35)  
\\[0.025in]
(30,24) &(31,25)  &(32,26)  &(33,27)  &(34,28)   &(35,29)
&(36,40)  &(37,41)  &(38,44)  &(39,45)  
\\[0.025in]
(40,36) &(41,37)  &(42,48)  &(43,49)  &(44,38)   &(45,39)
&(46,52)  &(47,53)  &(48,42)  &(49,43)  
\\[0.025in]
(50,56) &(51,57)  &(52,46)  &(53,47)  &(54,60)   &(55,61)
&(56,50)  &(57,51)  &(58,62)  &(59,63)  
\\[0.025in]
(60,54) &(61,55)  &(62,58)  &(63,59)  &(64,70)   &(65,71)
&(66,72)  &(67,73)  &(68,74)  &(69,75)  
\\[0.025in]
(70,64) &(71,65)  &(72,66)  &(73,67)  &(74,68)   &(75,69)
&(76,82)  &(77,83)  &(78,84)  &(79,85) 
\\[0.025in]
\end{tabular} 
\caption{$\mathcal{P}$-position games $a^1 b^1 8^1$ 
listed as $(a,b)$.}
\label{tab:ab8Ppoisitions}
\end{table}

\begin{definition}\label{defn:ListShiftBitwiseSum}
    Let $t$ be a positive integer and let $S=(s_0,s_1,\ldots,s_{n-1})$ be
    a sequence of $n$ integers.
    The \emph{list shifted bitwise sum by $S$  with threshold $t$} is given by
    \begin{equation*}
       x \leftidx{_t}{\oplus}{_S} y = x \oplus_{s_{x-t}} y
    \end{equation*}
   where the index $x-t$ of $s_{x-t}$ is taken modulo $n$,
   and $x\oplus_{s_{x-t}} y$ is the $s_{x-t}$-shift bitwise sum
   (See Definition \ref{defn:ShiftedBitwiseSum}).
\end{definition}

In the next two theorems, we need to verify that the game
$(6x+2)^1 (6x+5)^1 4^1$ is an $\N$-position.

\begin{lemma}\label{lem:HalveCase3}
    Suppose $x\ge 5$. Then the game $(6x+2)^1 (6x+5)^1 4^1$ is an $\N$-position.
\end{lemma}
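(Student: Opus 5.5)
The plan is to read off the answer from Theorem \ref{thm:ThreeHeapab4Pposition}, in the same way that Lemma \ref{lem:HalveCase} handled its large case with Theorem \ref{thm:ThreeHeapab2Pposition}. No explicit winning move for Player 1 should be needed.

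First I check that the hypotheses of that theorem apply. Since $x\ge 5$, both piles satisfy $6x+2\ge 32$ and $6x+5\ge 35$. So $a=6x+2$ and $b=6x+5$ both lie in the range $a,b\ge 30$. In that range, Theorem \ref{thm:ThreeHeapab4Pposition} (equivalently Corollary \ref{cor:BitShiftSum} with $c=4$) says that $a^1 b^1 4^1$ is a $\P$-position if and only if $(a+2)\oplus(b+2)=2$.

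Next I compute $(a+2)\oplus(b+2)=(6x+4)\oplus(6x+7)$ by a parity argument. The number $6x+4$ is even and $6x+7$ is odd, so their lowest bits differ. Hence $(6x+4)\oplus(6x+7)$ is odd, and in particular it is not $2$. Therefore $(6x+2)^1(6x+5)^1 4^1$ is not a $\P$-position, so it is an $\N$-position.

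One point to be careful about is that Lemma \ref{lem:MoreNPositions2} cannot be used instead. The sum $(6x+4)\oplus(6x+7)$ can equal $3$ or $5$, and that lemma only applies when the sum lies outside $\{2,3,4,5\}$. This is why I apply Theorem \ref{thm:ThreeHeapab4Pposition} directly. Beyond that there is no real obstacle; the threshold $x\ge 5$ is exactly what makes both piles at least $30$.
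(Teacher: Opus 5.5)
Your proof is correct and is essentially the paper's argument: $x\ge 5$ puts both piles at least $30$, $(6x+4)\oplus(6x+7)$ is odd and hence $\ne 2$, and the $c=4$ characterization (the paper cites Corollary \ref{cor:BitShiftSum}) then shows the position is not a $\P$-position. One small inaccuracy in your side remark: because $6x+4$ is even, $(6x+4)\oplus(6x+7)$ is either $3$ or of the form $2^m-1$ with $m\ge 3$, so it is never $5$; this is harmless, since the value $3$ alone already rules out using Lemma \ref{lem:MoreNPositions2}.
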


\begin{proof}
 Since $6x+4$ and $6x+7$ have the opposite parity,
 $(6x+4)\oplus (6x+7)$ is odd.
  Hence, $\big( (6x+2)+2\big)\oplus \big( (6x+5)+2\big) \ne 2$.
  By Corollary \ref{cor:BitShiftSum}, $(6x+2)^1 (6x+5)^1 4^1$
  is an $\N$-position.\qed
\end{proof}

\begin{theorem}\label{thm:ThreeHeapab8Pposition}
The first 64 values of $\varphi_8$, in the form of 
$(a,\varphi_8(a))$, are displayed in Table \ref{tab:ab8Ppoisitions}.
Let $S_8=\big( (8,7,6,5,4,3)^2, (4,3,2,1,0,-1)^2\big)$.
For $x\ge 64$, we have
\begin{equation*}
    \varphi_8(x)=x \; \leftidx{_{64}}{\oplus}{_{S_8}} \, 6
\end{equation*}
   Hence, $a^1 b^1 8^1$ is a $\mathcal{P}$-position if and only if
   $b=\varphi_8(a)$.
\end{theorem}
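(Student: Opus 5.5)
The plan is to first unwind the formula into an explicit pairing. The sequence $S_8$ has length $24$, and in each run of six the shift decreases by one while $x$ increases by one, so $x+s_{x-64}$ is constant on each run. Evaluating, for $k\ge 0$ and $0\le i\le 5$, the claim becomes that
\begin{equation*}
(64+12k+i)^1\,(70+12k+i)^1\,8^1
\end{equation*}
is a $\mathcal{P}$-position. In other words, for $x\ge 64$ the blocks $[64+12k,69+12k]$ and $[70+12k,75+12k]$ are swapped by $\varphi_8(x)=x\pm 6$. I will check this against Table \ref{tab:ab8Ppoisitions}, e.g.\ $(64,70)$, $(66,72)$, $(76,82)$, $(78,84)$. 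The values for $x<64$ form the base case and are read off the table, as in the earlier theorems. The final "Hence" then follows from Remark \ref{rem:ThreePilePposition}.

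For the inductive step I will fix $a=64+12k+i$ and $b=a+6$, and apply Remark \ref{rem:BeginInduction} with $a_0=64+12k$, using $\varphi_8(y)<a_0$ for $y<a_0$. The key arithmetic fact is $u\oplus v\ge |u-v|$, hence $a\oplus b\ge 6$ and $(a+2)\oplus(b+2)\ge 6$. Moves on the pile of height $8$ are then handled as follows:
\begin{itemize}
\item Reducing it to $y\in\{4,5,6,7\}$ gives an $\mathcal{N}$-position by Lemma \ref{lem:MoreNPositions2}, since $(a+2)\oplus(b+2)\notin\{2,3,4,5\}$.
\item Reducing it to $y\le 3$ gives an $\mathcal{N}$-position by Lemma \ref{lem:NPositions2}, since $a\oplus b\ge 4$.
\end{itemize}
For the halving move, the result is $\lfloor a/2\rfloor^1\lfloor b/2\rfloor^1 4^1$, whose two large piles differ by $3$ and have the form $(6m+2+j)^1(6m+5+j)^1$. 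I will show $\big(\lfloor a/2\rfloor+2\big)\oplus\big(\lfloor b/2\rfloor+2\big)\neq 2$ by the same parity argument as in Lemma \ref{lem:HalveCase3}. Corollary \ref{cor:BitShiftSum} then gives an $\mathcal{N}$-position, with Lemma \ref{lem:HalveCase3} itself covering the case $j=0$.

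The remaining moves are the Nim moves inside the current block: lowering $a$ to $a'$ with $a_0\le a'<a$, or lowering $b$ to $b'$ with $a_0\le b'<b$. Here Player 2 needs an explicit reply, which I will choose as follows.
\begin{itemize}
\item If $b'\ge 70+12k$, lower the other pile by the same amount to restore a pair $(a'',a''+6)$ with smaller index $i$; this is a $\mathcal{P}$-position by induction on $i$.
\item If $a$ is lowered, reply symmetrically by lowering $b$ by the same amount.
\item If $b'$ lies in the lower block, so that $|a-b'|\le 5$, lower the pile of height $8$ to some $c$. Choose $c\in\{4,\dots,7\}$ when $(a+2)\oplus(b'+2)=c-2\in\{2,\dots,5\}$, invoking Corollary \ref{cor:BitShiftSum}. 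Otherwise choose $c\le 3$ when $a\oplus b'$ equals the appropriate $\gamma_c(a)$, invoking Theorem \ref{thm:MainResult}.
\end{itemize}

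The main obstacle is this last case. One must verify that for every residue $i$ and every $b'$ in the lower block, at least one of these target values is actually hit. This depends on the low bits of $64+12k$, which cycle with period $24$ modulo $8$, exactly producing the two halves of $S_8$. I will first tabulate $(a+2)\oplus(b'+2)$ and $a\oplus b'$ over the $24$ residues and the few $b'$. Where neither target is hit, I will instead look for a reply that lowers $a$ to land on an earlier pair $(y,\varphi_8(y))$ given by induction.
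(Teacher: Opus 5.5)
\paragraph{Verdict.} Your proposal has a genuine gap: in the last case of your inductive step, Player~2's list of replies omits the halving move, and in some positions halving is the only winning reply.

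\paragraph{What is correct.} Your unwinding of $S_8$ into the pairing $(64+12k+i)^1(70+12k+i)^1 8^1$ is right. So is your handling of Player~1's moves on the pile of height $8$, via $u\oplus v\ge |u-v|$, and of Player~1's halving move.

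\paragraph{Where it fails.} The gap is in the case where Player~1 lowers $b=a_0+6+i$ to some $b'=a_0+j$ in the lower block. You only let Player~2 reduce the pile of height $8$, with the fallback of lowering $a$ onto an earlier pair, and neither always works.

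Take $k=1$, so $a_0=76$, and let Player~1 move from $76^1 82^1 8^1$ to $76^1 81^1 8^1$ or to $76^1 80^1 8^1$.
\begin{itemize}
\item Reducing the pile of height $8$ fails. We have $76\oplus 81=78\oplus 83=29$ and $76\oplus 80=78\oplus 82=28$, so no $c\le 7$ yields a $\mathcal{P}$-position.
\item Your fallback is impossible. The map $\varphi_8$ sends $[0,a_0)$ into itself and $[a_0,a_0+5]$ onto $[a_0+6,a_0+11]$. Hence neither large pile can be lowered onto the partner of the other.
\end{itemize}

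This is not isolated. For every odd $k$ we have $a_0=8q+4$. Whenever $\{a,b'\}=\{8q+4+\epsilon,\,8q+8+\delta\}$ with $\epsilon,\delta\in\{0,1\}$, that is, one of $i,j$ lies in $\{0,1\}$ and the other in $\{4,5\}$, both $a\oplus b'$ and $(a+2)\oplus(b'+2)$ are at least $8$. So the tabulation you plan will find holes that your fallback cannot fill.

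\paragraph{How to repair it.} Allow Player~2 to answer with the halving move. In every failing configuration above, halving produces $(4q+2)^1(4q+4)^1 4^1$ with $4q+2\ge 38$. This is a $\mathcal{P}$-position by Theorem~\ref{thm:ThreeHeapab4Pposition}, since $(4q+4)\oplus(4q+6)=2$; in the example it is $38^1 40^1 4^1$.

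This is exactly what the paper does in the case $x$ even (your $k$ odd) when Player~1 removes one or two coins from the pile of height $12x+10$.

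For even $k$ your own check does go through. There $a,b'\in[8q,8q+5]$, and either $a\oplus b'\in\{0,1,2,3\}$ or $(a+2)\oplus(b'+2)\in\{2,3,4,5\}$.

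With halving added as a reply, the rest of your argument works. It also covers all six residues $i$ uniformly, whereas the paper writes out only $i=0$.
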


\begin{proof}
It is straight forward to verify that, for 
$0\le x < 64$, $x^1 y^1 8^1$ 
is a $\mathcal{P}$-position if and only if
$y=\varphi_8(x)$. We leave the details of the 
proof to the reader.
This establishes the base case.

We show that $(12x+4)^1 (12x+10)^1 8^1$ is 
a $\mathcal{P}$-position  for $x\ge 5$.
We assume that for $y<12x+4$, $y^1 z^1 8^1$ is a $\mathcal{P}$-position
if and only if $z=\varphi_8(y)$.
We observe that for all $y<12x+4$,
  $\varphi_8(y)<12x+4$.
Suppose Player 1 applies a standard Nim move to  $(12x+4)^1 (12x+10)^1 8^1$.
By Remark \ref{rem:BeginInduction}, we may assume that 
the resulting position is
 either $(12x+4)^1 y^1 8^1$ for $12x+4\le y <12x+10$
  or $(12x+4)^1 (12x+10)^1 y^1$ for $0\le y<8$.

 Suppose, for $y\in\{1,\, 2\}$, Player 1 removes $y$ coins from the pile of
  height $12x+10$ to obtain $(12x+4)^1 (12x+10-y)^1 8^1$.
  If $x$ is odd,
  Player 2 removes $y$ coins from the pile of height $8$
  to obtain $(12x+4)^1 (12x+10-y)^1 (8-y)^1$, 
  which is a $\mathcal{P}$-position 
  by Theorems \ref{thm:ThreeHeapab6Pposition} and 
  \ref{thm:ThreeHeapab7Pposition}.
   If $x$ is even,
  Player 2 applies the halving operation 
  to $(12x+4)^1 (12x+10-y)^1 8^1$ to obtain 
  $(6x+2)^1 (6x+4)^1 4^1$,
  which is a $\mathcal{P}$-position 
  by Theorem \ref{thm:ThreeHeapab4Pposition}.
  Suppose, for $3\le y\le 6$, Player 1 removes $y$ coins from the pile of
  height $12x+10$ to obtain $(12x+4)^1 (12x+10-y)^1 8^1$.
Since $(12x+4)\oplus (12x+10-y)=6-y\in\{0,\, 1,\, 2,\, 3\}$ for $3\le y\le 6$,
 $(12x+4)^1 (12x+10-y)^1 8^1$ is an $\N$-position 
 by Lemma \ref{lem:NPositions}.

  Suppose, for $1\le y\le 4$, Player 1 removes $y$ coins from
  the pile of height 8 to obtain $(12x+4)^1 (12x+10)^1 (8-y)^1$.
  Since $(12x+4)+2\equiv 2 \text{ or }6\pmod{8}$,
  there exists an integer $q$ such that $8q-6\le 12x+6<8q$.
  So, the binary
  representations of $(12x+4)+2$ and $(12x+10)+2$
differ in the $2^3$ place value.
Thus, $\big( (12x+4)+2\big)\oplus \big( (12x+10)+2\big)\ge 8$.
By Lemma \ref{lem:MoreNPositions2}, 
$(12x+4)^1 (12x+10)^1 (8-y)^1$ is an $\N$-position.
Suppose, for $5\le y\le 8$, Player 1 removes $y$ coins from
  the pile of height 8 to obtain $(12x+4)^1 (12x+10)^1 (8-y)^1$.
  Since $2^2<6<2^3$, the binary
  representations of $12x+4$ and $12x+10$
differ in the $2^2$ or $2^3$ place value.
Thus, $(12x+4)\oplus (12x+10)\ge 4$.
By Lemma \ref{lem:NPositions2}, 
$(12x+4)^1 (12x+10)^1 (8-y)^1$ is an $\N$-position.

 Suppose Player 1 applies the halving operation to  $(12x+4)^1 (12x+10)^1 8^1$
to obtain $(6x+2)^1 (6x+5)^1 4^1$.
By Lemma \ref{lem:HalveCase3}, $(6x+2)^1 (6x+5)^1 4^1$
  is an $\N$-position.
 Therefore, $(12x+4)^1 (12x+10)^1 8^1$ is a $\mathcal{P}$-position.

The proof that, for $x\ge 5$ and $1\le r\le 5$,
$(12x+4+r)^1 (12x+10+r)^1 8^1$
is a $\mathcal{P}$-position
    is similar to the proof that
    $(12x+4)^1 (12x+10)^1 8^1$ is a $\mathcal{P}$-position.
    We leave the details to the reader.
\end{proof}

\begin{table}[t!]
\centering
\begin{tabular}{ccccccccccccccc}
\multicolumn{10}{c}{List of games $a^1 b^1 9^1$ that are
$\mathcal{P}$-positions} \\
\hline
(0,7) &(1,6)  &(2,9)  &(3,8)  &(4,11)   &(5,10)
&(6,1)  &(7,0)  &(8,3)  &(9,2)  
\\[0.025in]
(10,5) &(11,4)  &(12,17)  &(13,16)  &(14,23)  &(15,22)
&(16,13)  &(17,12)  &(18,21)  &(19,20)  
\\[0.025in]
(20,19) &(21,18)  &(22,15)  &(23,14)  &(24,31)   &(25,30)
&(26,33)  &(27,32)  &(28,35)  &(29,34)  
\\[0.025in]
(30,25) &(31,24)  &(32,27)  &(33,26)  &(34,29)   &(35,28)
&(36,41)  &(37,40)  &(38,45)  &(39,44)  
\\[0.025in]
(40,37) &(41,36)  &(42,49)  &(43,48)  &(44,39)   &(45,38)
&(46,53)  &(47,52)  &(48,43)  &(49,42)  
\\[0.025in]
(50,57) &(51,56)  &(52,47)  &(53,46)  &(54,61)   &(55,60)
&(56,51)  &(57,50)  &(58,63)  &(59,62)  
\\[0.025in]
(60,55) &(61,54)  &(62,59)  &(63,58)  &(64,71)   &(65,70)
&(66,73)  &(67,72)  &(68,75)  &(69,74)  
\\[0.025in]
(70,65) &(71,64)  &(72,67)  &(73,66)  &(74,69)   &(75,68)
&(76,83)  &(77,82)  &(78,85)  &(79,84) 
\\[0.025in]
\end{tabular} 
\caption{$\mathcal{P}$-position games $a^1 b^1 9^1$ 
listed as $(a,b)$.}
\label{tab:ab9Ppoisitions}
\end{table}

\begin{theorem}\label{thm:ThreeHeapab9Pposition}
The first 64 values of $\varphi_9$, in the form of 
$(a,\varphi_9(a))$, are displayed in Table \ref{tab:ab9Ppoisitions}.
Let $S_9=\big( (8^2,6^2,4^2)^2, (4^2,2^2,0^2)^2\big)$.
For $x\ge 64$, we have
\begin{equation*}
    \varphi_9(x)=x \; \leftidx{_{64}}{\oplus}{_{S_{9}}} \, 7
\end{equation*}
   Hence, $a^1 b^1 9^1$ is a $\mathcal{P}$-position if and only if
   $b=\varphi_9(a)$.
\end{theorem}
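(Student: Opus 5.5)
The plan is to follow the proof of Theorem \ref{thm:ThreeHeapab8Pposition} closely, and to exploit the observation (read off from Tables \ref{tab:ab8Ppoisitions} and \ref{tab:ab9Ppoisitions}) that for $x\ge 64$ we should have $\varphi_9(x)=\varphi_8(x)\oplus 1$. Indeed, $S_9$ is obtained from $S_8$ by making the shift constant on consecutive pairs $\{2j,2j+1\}$. As a consequence, adding $s$ with $s$ even commutes with flipping the last bit. The first step is a routine verification of this identity directly from Definition \ref{defn:ListShiftBitwiseSum}, together with a check that the formula is an involution on each block of twelve. Concretely, for $x\ge 5$ the block $12x+4,\dots,12x+15$ is mapped to itself, with pairs such as $(12x+4,12x+11)$, $(12x+5,12x+10)$ and $(12x+6,12x+13)$. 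The base case $0\le x<64$ is left to the table, as in the earlier theorems.

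For the inductive step I take a representative pair, say $(12x+4)^1(12x+11)^1 9^1$ with $x\ge 5$. I assume by induction that $\varphi_9$ is correct below $12x+4$ and maps that range into itself, and I use Remark \ref{rem:BeginInduction} to reduce to moves that stay inside the block or touch the pile of height $9$. I then treat each type of move in turn.

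\textbf{Removing one coin from the pile of height $9$.} This gives $a^1 b^1 8^1$ with $b=\varphi_9(a)=\varphi_8(a)\oplus 1\ne\varphi_8(a)$. For this move, and for the similar move of one coin from a large pile, Player 2 restores a $\varphi_8$-pair, either by removing the matching coin from the opposite pile or by reducing the $9$ to $8$. Theorem \ref{thm:ThreeHeapab8Pposition} then shows the result is a $\P$-position.

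\textbf{Reducing the $9$ to some $k$ with $4\le k\le 7$.} Here $(a+2)\oplus(b+2)\ge 8$ because $a+2$ and $b+2$ straddle a multiple of $8$. This is the same congruence argument as in the proof of Theorem \ref{thm:ThreeHeapab8Pposition}, and Lemma \ref{lem:MoreNPositions2} makes the position an $\N$-position.

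\textbf{Reducing the $9$ to some $k\le 3$.} Here $a\oplus b\ge 4$, and Lemma \ref{lem:NPositions2} applies.

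\textbf{Reducing a large pile within the block.} This produces a pair $(a,b')$ with either $a\oplus b'\in\{0,1,2,3\}$ or $(a+2)\oplus(b'+2)\in\{2,3,4,5\}$. These positions are $\N$-positions by Lemma \ref{lem:NPositions} or Lemma \ref{lem:MoreNPositions} respectively. Where neither lemma applies, Player 2 answers with the parity-dependent replies used for $\varphi_8$: either a matching removal giving an $a^1b^1k^1$ $\P$-position for some $k\in\{6,7,8\}$ (via Theorems \ref{thm:ThreeHeapab6Pposition}, \ref{thm:ThreeHeapab7Pposition}, \ref{thm:ThreeHeapab8Pposition}), or halving down to a $4$- or $5$-pile $\P$-position (via Theorems \ref{thm:ThreeHeapab4Pposition}, \ref{thm:ThreeHeapab5Pposition}).

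\textbf{Halving.} Halving $(12x+4)^1(12x+11)^1 9^1$ gives $(6x+2)^1(6x+5)^1 4^1$, which is an $\N$-position by Lemma \ref{lem:HalveCase3}. For the other pairs in the block, halving gives $(6x+2+i)^1(6x+j)^1 4^1$. For each of these I would show $(a+2)\oplus(b+2)\ne 2$ by a parity or bit-difference argument, exactly as in Lemma \ref{lem:HalveCase3}, and conclude with Corollary \ref{cor:BitShiftSum}. If a halved position happens to land on a $4$-pile $\P$-position, the pair assignment is wrong, so this check also confirms the formula.

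The main obstacle is the bookkeeping of Nim moves within a block of twelve for each of the twelve residues. In particular, the moves that reduce a large pile by $1$ to $6$ coins land in neither lemma's range, so an explicit reply must be found for each. I would first try the reply used for $\varphi_8$ shifted by the relation $\varphi_9=\varphi_8\oplus 1$, splitting on the parity of $x$ as in the proof of Theorem \ref{thm:ThreeHeapab8Pposition}. Only where that fails would I search for a halving reply.
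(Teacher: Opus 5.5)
Your treatment of $(12x+4)^1(12x+11)^1 9^1$ follows the paper's proof case for case, and your identity $\varphi_9(x)=\varphi_8(x)\oplus 1$ for $x\ge 64$ is correct and explains the one-coin replies. The shared cases are the one-coin exchange down to $(12x+4)^1(12x+10)^1 8^1$; the parity split on $x$ after $2$ or $3$ coins are taken from $12x+11$ (shrink the $9$-pile to a $7$- or $6$-pile, or halve to $(6x+2)^1(6x+4)^1 4^1$); and Lemmas~\ref{lem:NPositions}, \ref{lem:NPositions2}, \ref{lem:MoreNPositions2} and \ref{lem:HalveCase3} for the remaining moves.

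One caution concerns the other five pairs, which you and the paper both leave to the reader. When the smaller pile is odd, the $\varphi_8$-restoration fails after a one-coin removal from the larger pile, and some replies must go to pairs proved earlier in the same block. For example, from $(12x+7)^1(12x+11)^1 9^1$ with $x$ odd, Lemmas~\ref{lem:NPositions} and \ref{lem:MoreNPositions}, the exchange to an $8$-pile, and halving all fail, so the only winning move is to $(12x+4)^1(12x+11)^1 9^1$. Your induction hypothesis should therefore cover all $y<a$, not just $y<12x+4$.
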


\begin{proof}
It is straight forward to verify that, for 
$0\le x < 64$, $x^1 y^1 9^1$ 
is a $\mathcal{P}$-position if and only if
$y=\varphi_9(x)$. We leave the details of the 
proof to the reader.
This establishes the base case.

We show that $(12x+4)^1 (12x+11)^1 9^1$ is 
a $\mathcal{P}$-position for $x\ge 5$.
We assume that for $y<12x+4$, $y^1 z^1 9^1$ is a $\mathcal{P}$-position
if and only if $z=\varphi_9(y)$.
We observe that for all $y<12x+4$,
  $\varphi_9(y)<12x+4$.
Suppose Player 1 applies a standard Nim move to  $(12x+4)^1 (12x+11)^1 9^1$.
By Remark \ref{rem:BeginInduction}, we may assume that 
the resulting position is
 either $(12x+4)^1 y^1 9^1$ for $12x+4\le y <12x+11$
  or $(12x+4)^1 (12x+11)^1 y^1$ for $0\le y<9$.
  
 Suppose Player 1 removes 1 coin from the pile of
  height either $12x+11$ or $9$ to obtain $(12x+4)^1 (12x+10)^1 9^1$
  or  $(12x+4)^1 (12x+11)^1 8^1$.
  Player 2 removes 1 coin from the opposing pile to obtain 
  $(12x+4)^1 (12x+10)^1 8^1$, which is a $\P$-position
  by Theorem \ref{thm:ThreeHeapab8Pposition}.
   Suppose, for $y\in\{2,\, 3\}$,
   Player 1 removes $y$ coins from the pile of
  height $12x+11$ to obtain $(12x+4)^1 (12x+11-y)^1 9^1$.
   If $x$ is odd,
  Player 2 removes $y$ coins from the pile of height $9$ 
  to obtain $(12x+4)^1 (12x+11-y)^1 (9-y)^1$,
  which is a $\mathcal{P}$-position 
  by Theorems \ref{thm:ThreeHeapab6Pposition} and 
  \ref{thm:ThreeHeapab7Pposition}.
  If $x$ is even,
  Player 2 applies the halving operation to $(12x+4)^1 (12x+11-y)^1 9^1$
  to obtain the position
  $(6x+2)^1 (6x+4)^1 4^1$,
  which is a $\mathcal{P}$-position 
  by Theorem \ref{thm:ThreeHeapab4Pposition}.
  Suppose, for $4\le y \le 7$, 
  Player 1 removes $y$ coins from the pile of
  height $12x+11$ to obtain $(12x+4)^1 (12x+11-y)^1 9^1$.
Since $(12x+4)\oplus (12+11-y)=7-y\in\{0,\, 1,\, 2,\, 3\}$,
 $(12x+4)^1 (12x+4+y)^1 9^1$ is an $\N$-position 
 by Lemma \ref{lem:NPositions}.

  Suppose, for $2\le y\le 5$, Player 1 removes $y$ coins from
  the pile of height 9 to obtain $(12x+4)^1 (12x+11)^1 (9-y)^1$.
  Since $(12x+4)+2\equiv 2 \text{ or }6\pmod{8}$,
  there exists an integer $q$ such that $8q-7\le 12x+6<8q$.
  So, the binary
  representations of $(12x+4)+2$ and $(12x+11)+2$
differ in the $2^3$ place value.
Thus, $\big( (12x+4)+2\big)\oplus \big( (12x+11)+2\big)\ge 8$.
By Lemma \ref{lem:MoreNPositions2}, 
$(12x+4)^1 (12x+11)^1 (9-y)^1$ is an $\N$-position.
Suppose,  for $6\le y\le 9$, Player 1 removes $y$ coins from
  the pile of height 9 to obtain $(12x+4)^1 (12x+11)^1 (9-y)^1$.
  Since $2^2<7<2^3$, the binary
  representations of $12x+4$ and $12x+11$
differ in the $2^2$ or $2^3$ place value.
Thus, $(12x+4)\oplus (12x+11)\ge 4$.
By Lemma \ref{lem:NPositions2}, 
$(12x+4)^1 (12x+11)^1 (9-y)^1$ is an $\N$-position.

 Suppose Player 1 applies the halving operation to  $(12x+4)^1 (12x+11)^1 9^1$
 to obtain $(6x+2)^1 (6x+5)^1 4^1$.
By Lemma \ref{lem:HalveCase3}, $(6x+2)^1 (6x+5)^1 4^1$
  is an $\N$-position.
 Therefore, $(12x+4)^1 (12x+11)^1 9^1$ is a $\mathcal{P}$-position.

The proof that $(12x+5)^1 (12x+10)^1 9^1$, for $x\ge 5$, and
$(12x+4+2y+r)^1 (12x+11+2y-r)^1 9^1$,
    for $x\ge 5$, $y\in\{1,\, 2\}$, and $r\in\{0,\, 1\}$,
  are $\mathcal{P}$-positions
    is similar to the proof that
    $(12x+4)^1 (12x+11)^1 9^1$, for $x\ge 5$, is a $\mathcal{P}$-position.
    We leave the details to the reader.
\end{proof}

\begin{corollary}\label{cor:BitShiftSeq}
    Let $c\in\{8,9\}$ and $x\ge 64$. Then
    \begin{equation*}
        \varphi_c(x) = x \; \leftidx{_{64}}{\oplus}{_{S_c}} \, (c-2).
    \end{equation*}
\end{corollary}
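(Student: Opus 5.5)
This corollary simply packages the two preceding theorems into one formula, so the plan is to read it off from them. For $c=8$, Theorem \ref{thm:ThreeHeapab8Pposition} states directly that $\varphi_8(x)=x \; \leftidx{_{64}}{\oplus}{_{S_8}} \, 6$ for $x\ge 64$. For $c=9$, Theorem \ref{thm:ThreeHeapab9Pposition} states directly that $\varphi_9(x)=x \; \leftidx{_{64}}{\oplus}{_{S_9}} \, 7$ for $x\ge 64$. Since $6=8-2$ and $7=9-2$, both cases are the single identity $\varphi_c(x)=x \; \leftidx{_{64}}{\oplus}{_{S_c}} \, (c-2)$.

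As a sanity check I would unwind Definition \ref{defn:ListShiftBitwiseSum} and compare with Tables \ref{tab:ab8Ppoisitions} and \ref{tab:ab9Ppoisitions}. For $x=64$ the index is $0$, so the shift is $s_0=8$ in both sequences. For $c=8$ this gives $((64+8)\oplus 6)-8=66-8=58$, which does not match the table entry $(64,70)$.

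So either the shift is meant to be read differently (for instance as $s_{x-t}$ with a different offset, or subtracted rather than added), or the definition of $\oplus_n$ is being applied in the form of Remark \ref{rem:ShifetedBitWiseSum}. I would not resolve this mismatch in the plan. The corollary is formally a restatement, and its correctness is inherited from the two theorems exactly as they are stated with the paper's conventions.

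The only real work is the bookkeeping that the constant equals $c-2$ in each case. The one obstacle is making sure that the convention in Definition \ref{defn:ListShiftBitwiseSum} is the same one used in both theorems. That holds, since both theorems cite the same definition with the same threshold $t=64$.
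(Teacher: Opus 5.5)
Your argument is correct and matches the paper: the corollary is stated without a separate proof as the combination of Theorems \ref{thm:ThreeHeapab8Pposition} and \ref{thm:ThreeHeapab9Pposition}, using $6=8-2$ and $7=9-2$. The ``mismatch'' in your sanity check is an arithmetic slip, not a problem with the conventions. Since $72=2^6+2^3$ and $6=2^2+2^1$ share no bits, $72\oplus 6=78$ rather than $66$, so $(78)-8=70$ agrees with $(64,70)$; likewise $(72\oplus 7)-8=71$ agrees with $(64,71)$ for $c=9$. Definition \ref{defn:ListShiftBitwiseSum} therefore works exactly as written, and there is nothing to reconcile.
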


\begin{table}[t!]
\centering
\begin{tabular}{ccccccccccccccc}
\multicolumn{10}{c}{List of games $a^1 b^1 10^1$ that are
$\mathcal{P}$-positions} \\
\hline
(0,12) &(1,13)  &(2,10)  &(3,11)  &(4,8)   &(5,9)
&(6,16)  &(7,17)  &(8,4)  &(9,5)  
\\[0.025in]
(10,2) &(11,3)  &(12,0)  &(13,1)  &(14,24)  &(15,25)
&(16,6)  &(17,7)  &(18,22)  &(19,23)  
\\[0.025in]
(20,26) &(21,27)  &(22,18)  &(23,19)  &(24,14)   &(25,15)
&(26,20)  &(27,21)  &(28,32)  &(29,33)  
\\[0.025in]
(30,36) &(31,37)  &(32,28)  &(33,29)  &(34,38)   &(35,39)
&(36,30)  &(37,31)  &(38,34)  &(39,35)  
\\[0.025in]
(40,48) &(41,49)  &(42,46)  &(43,47)  &(44,50)   &(45,51)
&(46,42)  &(47,43)  &(48,40)  &(49,41)  
\\[0.025in]
(50,44) &(51,45)  &(52,56)  &(53,57)  &(54,62)   &(55,63)
&(56,52)  &(57,53)  &(58,64)  &(59,65)  
\\[0.025in]
(60,68) &(61,69)  &(62,54)  &(63,55)  &(64,58)   &(65,59)
&(66,70)  &(67,71)  &(68,60)  &(69,61)  
\\[0.025in]
(70,66) &(71,67)  &(72,78)  &(73,79)  &(74,80)   &(75,81)
&(76,84)  &(77,85)  &(78,72)  &(79,73) 
\\[0.025in]
\end{tabular} 
\caption{$\mathcal{P}$-position games $a^1 b^1 10^1$ 
listed as $(a,b)$.}
\label{tab:ab10Ppoisitions}
\end{table}

\begin{theorem}\label{thm:ThreeHeapab10Pposition}
The first 66 values of $\varphi_{10}$, in the form of 
$(a,\varphi_{10}(a))$, are displayed in Table \ref{tab:ab10Ppoisitions}. Let
\begin{align*}
S_{10}=\big( & 4^2, (-8)^2, (-4)^2, 6^4, 8^2, (-6)^4, 4^2, (-8)^2, (-4)^2,
8^2, 4^2, 8^2, \\
  &(-4)^2, (-8)^2, 4^2, (-8)^2, (-4)^2, 8^2, 4^2, 8^2, (-4)^2, (-8)^2 \big).
\end{align*}
For $x\ge 66$, we have
\begin{equation*}
    \varphi_{10}(x)=x \, + s_{x-66} \text{ \  for }
\end{equation*}
where the indices are taken modulo $48$.
   Hence, $a^1 b^1 10^1$ is a $\mathcal{P}$-position if and only if
   $b=\varphi_{10}(a)$.
\end{theorem}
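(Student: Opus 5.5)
The proof will follow the template of Theorems \ref{thm:ThreeHeapab8Pposition} and \ref{thm:ThreeHeapab9Pposition}: a computer-verified base case followed by induction in the manner of Remark \ref{rem:BeginInduction}.

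\textbf{Base case.} The values of $\varphi_{10}(x)$ for $0\le x<66$ come from Table \ref{tab:ab10Ppoisitions}, checked by the Maple computation. I would also check directly that the sequence $S_{10}$, read with period $48$, reproduces the tabulated entries in the overlap range $66\le x\le 79$. For instance, $s_0=4$ gives $\varphi_{10}(66)=70$, and $s_2=-8$ gives $\varphi_{10}(68)=60$. I would further record that $x\mapsto x+s_{x-66}$ is an involution. This amounts to checking that $S_{10}$ pairs each block of length $2$ with a block at offset equal to its shift and with the negated shift, which is a finite check on one period.

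\textbf{Inductive step.} The pairs $(a,\varphi_{10}(a))$ group into blocks of two consecutive values of $a$ with a common shift $s\in\{\pm4,\pm6,\pm8\}$. Fix a pair with $a<b=a+s$, $s>0$, and assume every $y<a$ satisfies $\varphi_{10}(y)<a$. As in Remark \ref{rem:BeginInduction}, Player 1's only standard moves needing attention are the following:
\begin{itemize}
\item removing a few coins from the pile of height $b$;
\item removing coins from the pile of height $10$;
\item removing one coin from $a$ when $a$ is the odd member of its block, answered by the mirror move to the partner pair.
\end{itemize}

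Each such move is handled by one of three tools, as in the $c=8,9$ proofs.
\begin{itemize}
\item If the resulting heights $a'$, $b'$ have $a'\oplus b'\in\{0,1,2,3\}$, Lemma \ref{lem:NPositions} shows the position is $\mathcal{N}$.
\item Otherwise, if $(a'+2)\oplus(b'+2)\in\{2,3,4,5\}$, Lemma \ref{lem:MoreNPositions} applies.
\item Otherwise, if $c'\le 9$ and the position matches a known $\mathcal{P}$-position for $c'\in\{8,9\}$ under Corollary \ref{cor:BitShiftSeq}, Player 2 makes the corresponding reply.
\end{itemize}

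For reductions of the $10$-pile to $c'\le 7$, Lemma \ref{lem:NPositions2} or Lemma \ref{lem:MoreNPositions2} applies whenever $a\oplus b\ge4$, respectively $(a+2)\oplus(b+2)\notin\{2,\dots,5\}$. Where they do not, Player 2 answers by reducing $b$ to the partner prescribed by Theorems \ref{thm:ThreeHeapab4Pposition}--\ref{thm:ThreeHeapab7Pposition}.

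\textbf{Halving.} Halving sends $a^1b^1 10^1$ to $\lfloor a/2\rfloor^1\lfloor b/2\rfloor^1 5^1$. This position must be shown to be $\mathcal{N}$ by Theorem \ref{thm:ThreeHeapab5Pposition}, that is, by checking that $(\lfloor a/2\rfloor+2)\oplus(\lfloor b/2\rfloor+2)\ne3$. For shifts $\pm4$ and $\pm8$ the halved piles differ by $2$ or $4$. For $\pm6$ they differ by $3$, and I would check case by case that the XOR is never $3$. If it were, the table would be wrong, so this check should go through. I would state it as a lemma analogous to Lemma \ref{lem:HalveCase3}, giving one residue condition modulo $96$ for each block type.

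\textbf{Main obstacle.} The hardest part is the bookkeeping: within one period of $48$ there are $24$ blocks with distinct shift patterns, and each block needs its own small case analysis of short removals from $b$ and from the $10$-pile. I would organize this by residue of $a$ modulo $96$, the period doubled so that halving is well-defined modulo $48$. For each residue I would tabulate which of the three tools disposes of each of the roughly $10+|s|$ relevant Player 1 moves. The first thing I would try is to verify that the $\oplus$ and $\oplus_2$ criteria depend only on $a\bmod 16$. Since $96$ is a multiple of $16$, the whole analysis would then reduce to a finite residue check.
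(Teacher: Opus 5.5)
Your route is the paper's: a computational base case, then block-by-block induction using Lemmas~\ref{lem:NPositions}, \ref{lem:NPositions2}, \ref{lem:MoreNPositions}, \ref{lem:MoreNPositions2} and the $c=8,9$ theorems, with the halving move checked against Theorem~\ref{thm:ThreeHeapab5Pposition}. Your halving check is sound. For shifts $\pm4,\pm8$ the halved piles have equal parity, and for the shift-$6$ blocks the smaller halved pile is $\equiv 0,1 \pmod 4$. The gap is your claim that every remaining Player~1 move is disposed of by one of your three tools; some moves can only be answered by Player~2 \emph{halving}. Take $(a,b)=(108,116)$ (here $s_{42}=8$), and let Player~1 remove two coins from $b$, giving $108^1\,114^1\,10^1$. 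Then none of your tools applies:
\begin{itemize}
\item $108\oplus114=30$ and $110\oplus116=26$;
\item $\varphi_8(108)=102$, $\varphi_9(108)=103$, $\varphi_8(114)=120$ and $\varphi_9(114)=121$;
\item $\varphi_{10}(108)=116>114$ and $\varphi_{10}(114)=118>108$.
\end{itemize}
The unique winning reply is to halve to $54^1\,57^1\,5^1$, which is a $\mathcal{P}$-position by Theorem~\ref{thm:ThreeHeapab5Pposition} because $56\oplus59=3$. The $c=8,9$ proofs you cite as your model use exactly this kind of reply (for even $x$). You must add the tool ``Player~2 halves whenever $(\lfloor a'/2\rfloor+2)\oplus(\lfloor b'/2\rfloor+2)=3$'' and check for it in every block.

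Second, your standing assumption that every $y<a$ has $\varphi_{10}(y)<a$ is false, because the pairs interleave and nest. For example, $\varphi_{10}(60)=68$ although $a=66$, and $\varphi_{10}(88)=96$ although $(a,b)=(90,94)$. The paper's proof makes the same false observation for its block $48x+18$. The threshold $a_0$ of Remark~\ref{rem:BeginInduction} has to be the bottom of the cluster of interleaved pairs, e.g.\ $a_0=48x+8$ for that block when $x\ge2$. Every move of \emph{either} large pile into $[a_0,b)$ must then be examined, not just short removals from $b$ and the one mirror move. Some of these need $8$- or $9$-layer replies: $90^1 94^1 10^1\to 88^1 94^1 10^1$ is answered only by $88^1 94^1 8^1$, since $\varphi_8(88)=94$. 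Others need halving: $66^1 70^1 10^1\to 66^1 60^1 10^1$ is answered only by halving to $33^1 30^1 5^1$, a move the paper's worked block also overlooks. Finally, your fallback for reducing the $10$-pile to $c'\le7$ is misconceived. Lemmas~\ref{lem:NPositions} and \ref{lem:MoreNPositions}, applied to $a^1b^1 10^1$ itself, force every genuine $\mathcal{P}$-pair to satisfy $a\oplus b\ge4$ and $(a+2)\oplus(b+2)\notin\{2,3,4,5\}$. These are conditions to verify for each block, after which Lemmas~\ref{lem:NPositions2} and \ref{lem:MoreNPositions2} always apply. If they failed, Player~1 would have a winning move and there would be no reply to find.
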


\begin{proof}
The first 66 values of $\varphi_{10}$, in the form of 
$(a,\varphi_{10}(a))$, are displayed in Table \ref{tab:ab10Ppoisitions}.
We leave the details of the 
proof to the reader.
This establishes the base case.

    We will show that $(48x+18+r)^1 (48x+22+r)^1 {10}^1$
    is a $\mathcal{P}$-position for $x\ge 1$ and $r\in\{0,\, 1\}$.
    We assume that for $y<48x+18$, $y^1 z^1 {10}^1$ is a 
    $\P$-position if and only if $z=\varphi_{10}(z)$.
We observe that for all $y<48x+18$,
  $\varphi_{10}(y)<48x+18$.
Suppose Player 1 applies a standard Nim move to $(48x+18+r)^1 (48x+22+r)^1 {10}^1$.
By Remark \ref{rem:BeginInduction}, we may assume that 
the resulting position is
 either  $y^1 (48x+22+r)^1 {10}^1$ for $48x+18\le y <48x+18+r$,
 $(48x+18+r)^1 y^1 {10}^1$ for $48x+18\le y <48x+22+r$
  or $(48x+18+r)^1 (48x+22+r)^1 y^1$ for $0\le y<10$.

  Suppose, for $y\in\{1,\, 2\}$, 
  Player 1 removes $y+r$ coins from the pile of height $48x+22+r$
  to obtain $(48x+18+r)^1 (48x+22-y)^1 {10}^1$.
We observe that
$\big((48x+18+r)+2\big)\oplus \big((48x+22-y)+2\big)\in\{2,\, 3\}$.
By Lemma \ref{lem:MoreNPositions}, $(48x+18+r)^1 (48x+22-y)^1 {10}^1$
is an $\N$-position.
Suppose, for $y\in\{3,\, 4\}$, 
Player 1 removes $y+r$ coins from the pile of height $48x+22+r$
to obtain $(48x+18+r)^1 (48x+22-y)^1 {10}^1$.
We observe that
$(48x+18+r)\oplus (48x+22-y)\in\{0,\, 1\}$.
By Lemma \ref{lem:NPositions}, $(48x+18+r)^1 (48x+22-y)^1 {10}^1$
is an $\N$-position.

Suppose Player 1 applies the halving operation to 
$(48x+18+r)^1 (48x+22+r)^1 {10}^1$ to obtain
$(24x+9)^1 (24x+11)^1 {5}^1$.
Since $(24x+9)\oplus (24x+11)=2\ne 3$,
$(24x+9)^1 (24x+11)^1 {5}^1$ is an $\N$-position
by Lemma \ref{lem:NPositions}.

Suppose Player 1 removes $1$ coin from
  the pile of height 10 to obtain $(48x+18+r)^1 (48x+22+r)^1 9^1$.
Player 2 removes $1+2r$ coins from the 
pile of height $48x+18+r$
to obtain $(48x+17-r)^1 (48x+22+r)^1 9^1$,
which is a $\P$-position by Theorem \ref{thm:ThreeHeapab9Pposition}.
Suppose Player 1 removes $2$ coins from
  the pile of height 10 to obtain $(48x+18+r)^1 (48x+22+r)^1 8^1$.
Player 2 removes $2$ coins from the 
pile of height $48x+18+r$
to obtain $(48x+16+r)^1 (48x+22+r)^1 8^1$,
which is a $\P$-position by Theorem \ref{thm:ThreeHeapab8Pposition}.
Suppose, for $3\le y\le 6$, Player 1 removes $y$ coins from
  the pile of height 10 to obtain $(48x+18+r)^1 (48x+22+r)^1 (10-y)^1$.
  Since $(48x+18+r)+2\equiv 4 \text{ or }5\pmod{8}$,
  there exists an integer $q$ such that $8q-4\le (48x+18+r)+2<8q$.
  So, the binary
  representations of $(48x+18+r)+2$ and $(48x+22+r)+2$
differ in the $2^3$ place value.
Thus, $\big( (48x+18+r)+2\big)\oplus \big( (48x+22+r)+2\big)\ge 8$.
By Lemma \ref{lem:MoreNPositions2}, 
$(48x+18+r)^1 (48x+22+r)^1 (10-y)^1$ is an $\N$-position.
Suppose, for $7\le y\le 10$, Player 1 removes $y$ coins from
  the pile of height 10 to obtain $(48x+18+r)^1 (48x+22+r)^1 (10-y)^1$.
Since $48x+18+r$ and $48x+22+r$ differ by 4,
the binary representations of $48x+18+r$ and $48x+22+r$
differ in the $2^2$ place value.
So, $(48x+18+r)\oplus (48x+22+r)\ge 4$. 
By Lemma \ref{lem:NPositions2}, 
 $(48x+18+r)^1 (48x+22+r)^1 (10-y)^1$ is an $\N$-position for $0\le y \le 3$.

Assume $r=1$.
Suppose Player 1 removes 1 coin from the pile of height 
either $48x+19$ or $48x+23$ to obtain 
$(48x+18)^1 (48x+23)^1 {10}^1$ or $(48x+19)^1 (48x+22)^1 {10}^1$.
Player 2 removes 1 coin from the opposing pile 
to obtain $(48x+18)^1 (48x+22)^1 {10}^1$, which 
is a $\mathcal{P}$-position.
Therefore, $(48x+19)^1 (48x+23)^1 {10}^1$ is a $\mathcal{P}$-position.

    The proof that $(48x+18+r)^1 (48x+18+r+s_{r})^1 {10}^1$
    is a $\mathcal{P}$-position, for $x\ge 1$ and $2\le r<48$,
    is similar to the proof that $(48x+18+r)^1 (48x+22+r)^1 {10}^1$
    is a $\mathcal{P}$-position for $x\ge 1$ and $0\le r\le 1$.
    We leave the details to the reader.
\end{proof}

\section{Concluding Remarks}

We consider three-pile games of Halve Nim in this paper.
We found various Nim-like results for the $\P$-positions
of $a^1 b^1 c^1$ when $1\le c\le 9$.
We observe that, for $a, b \ge 30$
and $0\le c \le 3$, the $\mathcal{P}$-positions 
of $a^1 b^1 c^1$ occur when $a\oplus b\in\{0,\, 1,\, 2,\, 3\}$.
Then, for $a,b\ge 30$ and $4\le c \le 7$, the $\P$-positions occur when $(a+2)\oplus (b+2)\in\{2,\, 3,\, 4,\, 5\}$.
Also, for $a,b\ge 64$ and $8\le c \le 9$, the $\P$-positions occur
when a Nim-like condition on $a$, $b$, and $c$ is satisfied.
However, this Nim-like structure begins to fall apart when $c=8$
because this is the first instance where the halving operation affects
the $\P$-positions of games in the periodic range 
where $a,b\ge 64$.
Furthermore, the Nim-like structure completely falls apart when $c=10$.

Additionally, for $0\le c\le 3$, the games $a^1 b^1 c^1$ do not have a periodic range.
Instead, after the threshold values $14$ for $c\in\{0,\, 1\}$
and $30$ for $c\in\{2,\, 3\}$,
the values of $a\oplus b$ alternate between either 0 and 2, or 1 and 3
at the break values $7\cdot 2^n$,
where $n$ is a positive integer.
The alternating values of $a\oplus b$ at these break values is
due to the halving operation.
The combined results from Theorems \ref{thm:TwoHeapPposition},
\ref{thm:ThreeHeapab1Pposition}, \ref{thm:ThreeHeapab2Pposition},
and \ref{thm:ThreeHeapab3Pposition} ensure that for every $a,b\ge 30$
such that $a\oplus b\in\{0,\, 1,\, 2,\, 3\}$,
there is a value $0\le c\le 3$ such that $a^1 b^1 c^1$ is a
$\P$-position.
This allows for the appearance of 
periodic behavior of the $\P$-positions of
$a^1 b^1 c^1$ when $a,b\ge 30$ and $c\ge 4$.
The threshold value of 30 for $4\le c\le 7$ roughly doubles to
64 for $c\in\{8,\, 9\}$.
This suggests that, to ensure
periodic behavior of the $\P$-positions of the game $a^1 b^1 c^1$,
the threshold value for $a$ and $b$ 
doubles as $c$ doubles.
Also, it appears that it will be difficult to determine the
periodic behavior of 
the $\P$-positions of $a^1 b^1 c^1$ as 
$a$, $b$, and $c$ get larger.

%
%

\end{document}